\documentclass[preprint,12pt]{elsarticle}

\usepackage{amssymb}
\usepackage{amsmath}
\usepackage{tikz}%for nice drawings and plots
\usetikzlibrary{patterns}
\usetikzlibrary{plotmarks}
\usetikzlibrary{arrows.meta, decorations.pathreplacing}
\usepackage{comment}
\usepackage{wrapfig}
\usetikzlibrary{arrows}
\usepackage{pgfplots}
\usetikzlibrary{patterns}
\usepgfplotslibrary{external}
\usepackage{xcolor}
\usepackage{ulem}
\usepackage{graphicx,epstopdf}
\usepackage[table]{xcolor}
\usepackage[caption=false]{subfig}
\usepackage{amssymb} 
\usepackage{amsthm}
\usepackage{algorithm}
\usepackage{algpseudocode}
\theoremstyle{remark}

\usepackage{nicefrac}

\newtheorem{theorem}{Theorem}

\theoremstyle{definition}

\theoremstyle{remark}
\newtheorem{remark}{Remark}

\usepackage[colorlinks=true,linkcolor=blue,citecolor=blue,urlcolor=blue]{hyperref}

\journal{Journal of Computational and Applied Mathematics}

\begin{document}

\begin{frontmatter}

%% Title, authors and addresses

%% use the tnoteref command within \title for footnotes;
%% use the tnotetext command for theassociated footnote;
%% use the fnref command within \author or \affiliation for footnotes;
%% use the fntext command for theassociated footnote;
%% use the corref command within \author for corresponding author footnotes;
%% use the cortext command for theassociated footnote;
%% use the ead command for the email address,
%% and the form \ead[url] for the home page:
%% \title{Title\tnoteref{label1}}
%% \tnotetext[label1]{}
%% \author{Name\corref{cor1}\fnref{label2}}
%% \ead{email address}
%% \ead[url]{home page}
%% \fntext[label2]{}
%% \cortext[cor1]{}
%% \affiliation{organization={},
%%             addressline={},
%%             city={},
%%             postcode={},
%%             state={},
%%             country={}}
%% \fntext[label3]{}

\title{On the role of relaxation and acceleration in the non-overlapping Schwarz alternating method for coupling}

%% use optional labels to link authors explicitly to addresses:
%% \author[label1,label2]{}
%% \affiliation[label1]{organization={},
%%             addressline={},
%%             city={},
%%             postcode={},
%%             state={},
%%             country={}}
%%
%% \affiliation[label2]{organization={},
%%             addressline={},
%%             city={},
%%             postcode={},
%%             state={},
%%             country={}}
\author[aff1]{Giulia Sambataro \corref{cor1}}
%\ead{giulia.sambataro@inria.fr}
\cortext[cor1]{Corresponding author. Email: giulia.sambataro@inria.fr}
\author[aff2]{ Irina Tezaur}
%% Author affiliation
\affiliation[aff1]{organization={Université de Strasbourg, CNRS, Inria, IRMA},%Department and Organization
            addressline={7 rue René-Descartes}, 
            city={Strasbourg Cedex},
            postcode={67084}, 
            country={France}}
%\cortext[cor1]{Corresponding author}        
\affiliation[aff2]{organization={Sandia National Laboratories},%Department and Organization
            addressline={7011 East Ave}, 
            city={Livermore},
            postcode={94550}, 
            state={CA},
            country={United States}}
            
%% Abstract
\begin{abstract}
 The purpose of this paper is to study the influence of relaxation and acceleration techniques on the convergence behavior of the non-overlapping Schwarz algorithm with alternating Dirichlet-Neumann transmission conditions in the context of domain decomposition- (DD-) based coupling.  % -- acting on transmission conditions at the interface boundary -- . 
 %We focus, in particular, on the Dirichlet-Neumann Schwarz scheme. 
 After demonstrating that the multiplicative Schwarz scheme can be formulated as a fixed-point iteration, we explore, both theoretically and numerically, two promising techniques for speeding up the method: (i) Aitken acceleration and (ii) Anderson acceleration.   In the process, we derive a robust and efficient adaptive variant of Anderson acceleration, termed ``Anderson with memory adaptation''.  We compare the proposed acceleration strategies to the well-known classical relaxed Dirichlet-Neumann Schwarz alternating method.   
 %After introducing the classical relaxation update of the interface data, we analyze Aitken and Anderson accelerations techniques, originally proposed in the field of series acceleration and fixed-point iteration acceleration.
     %We present both theoretical insights and numerical evidence illustrating how different choices of relaxation parameters and acceleration schemes modify the mathematical properties of the transmission operator at the interface, thereby influencing the convergence and performance of the Schwarz algorithm. 
     Our results suggest that, while Aitken-accelerated Schwarz is the best approach in terms of efficiency and robustness when considering two sub-domain DDs, Anderson-accelerated Schwarz is the method of choice in larger multi-domain settings.  
\end{abstract}

%%Graphical abstract
%\begin{graphicalabstract}
%\includegraphics{grabs}
%\end{graphicalabstract}

%%Research highlights
%\begin{highlights}
%\item We analyze the impact of relaxation and acceleration on non-overlapping Schwarz
%\item We develop an adaptive variant of Anderson acceleration to improve Schwarz robustness
%\item Theoretical analyses are performed on a one-dimensional study case
%\item Numerical studies explore the method’s performance when coupling up to 5 subdomains
%\end{highlights}

%% Keywords
\begin{keyword}
Non-overlapping domain decomposition \sep Dirichlet-Neumann Schwarz algorithm \sep
relaxation \sep Aitken acceleration \sep Anderson acceleration \sep fixed-point iteration \sep nonlinear elasticity.
%% keywords here, in the form: keyword \sep keyword

%% PACS codes here, in the form: \PACS code \sep code

%% MSC codes here, in the form: \MSC code \sep code
%% or \MSC[2008] code \sep code (2000 is the default)

\end{keyword}

\end{frontmatter}

%% Add \usepackage{lineno} before \begin{document} and uncomment 
%% following line to enable line numbers
%% \linenumbers

%% main text
%%

%% Use \section commands to start a section
\section{Introduction}
\label{sec:intro}
Domain decomposition (DD) methods have become a fundamental tool for the numerical solution of large-scale partial differential equations (PDEs)\footnote{
A nice overview of DD methods, including both overlapping and non-overlapping approaches, is provided by Nataf in \cite{dolean2015introduction}, which offers an accessible yet thorough introduction to these methods, with a particular emphasis on their applications to PDEs.}.    
A popular DD-based approach is the Schwarz alternating method, which is particularly attractive for parallel computing, as it: (i) requires only sub-domain-local solves, (ii) performs local communications between neighboring sub-domains to exchange boundary condition (BC) information at sub-domain interfaces, and (iii) can be implemented in a minimally-intrusive way into existing high-performance computing codes.
While the Schwarz alternating method is most commonly known in the linear solver literature, where it is often used as a preconditioner that can accelerate the convergence of Krylov iterative solvers \cite{gander2008schwarz}, several recent works \cite{mota2017schwarz, Mota:2022,  
barnett2022schwarz,
wentland2025role, 
rodriguez2025transmission,
tezaur2026hybridcouplingoperatorinference} have shown that the method can also be used as a concurrent multiscale coupling method capable of stitching together regions with different mesh resolutions,
different element types, different time integration schemes (e.g., implicit and explicit \cite{Mota:2022, tezaur2026hybridcouplingoperatorinference}) and different models (e.g., finite element and data-driven models \cite{barnett2022schwarz, 
wentland2025role,
moore2024domaindecompositionbasedcouplingoperator, rodriguez2025transmission, 
tezaur2026hybridcouplingoperatorinference}), all
without introducing any artifacts exhibited by alternative coupling methods.  

The Schwarz alternating method can be applied in the context of both overlapping and non-overlapping DDs, each with their own inherent advantages and disadvantages: whereas the overlapping Schwarz variant has more favorable convergence properties, the non-overlapping Schwarz scheme is more flexible, making it particularly attractive for multi-physics and multi-material applications because it allows completely independent meshing of the coupled sub-domains \cite{Mota:2022, tezaur2026hybridcouplingoperatorinference}.  
A number of authors, including Schwarz himself  \cite{Schwarz:1870}, have studied the theoretical properties of the overlapping
Schwarz alternating method \cite{Sobolev:2006,Mikhlin:1951,lions1988schwarz}, demonstrating convergence with Dirichlet-Dirichlet transmission conditions provided that the overlap region is non-empty. Extensions to the case of non-overlapping DDs were made concurrently by Lions \cite{Lions:1990} and Zanolli \textit{et al.} \cite{zanolli1987domain, funaro1988iterative}, who showed that either Robin-Robin or alternating Dirichlet-Neumann transmission conditions can yield a provably convergent algorithm.
%Robin-Robin transmission conditions for a non-overlapping DD were provided by Lions in \cite{Lions:1990}. %Lions proposed to use Robin conditions to obtain a convergent algorithm, and he added an example of analytically chosen Robin coefficients, opening the way to the first Optimized Schwarz methods. This method offers great advantage for convergence, but is relies on analytical choice of Robin parameters and its use is a bit limited to academic problems. % to verify better!!
%Concurrently, a provably-convergent non-overlapping Schwarz method based on relaxed alternating Dirichlet-Neumann transmission conditions was derived by Zanolli \textit{et al.} in \cite{zanolli1987domain, funaro1988iterative}. 
These foundational contributions paved the way for later developments, e.g., the so-called non-overlapping Schwarz waveform relaxation method, particularly relevant for time-dependent problems, in which a Dirichlet-Dirichlet iteration in both sub-domains is followed by a Neumann-Neumann iteration \cite{kwok2014neumann, mandalwaveform2017, ganderwaveform2014}.  
%, a Dirichlet-Dirichlet iteration with relaxation is first performed in both sub-domains, followed by a Neumann-Neumann iteration. This Schwarz variant has been demonstrated to offer convergence speedups compared to a classical non-overlapping Schwarz method, as discussed in \cite{mandalwaveform2017, ganderwaveform2014} 

%including other non-overlapping variants of the method: the method is particularly attractive for multi-physics and multi-material applications because it allows completely independent meshing of the coupled sub-domains.\cite{Mota:2022, tezaur2026hybridcouplingoperatorinference}.  

The present work is concerned with the non-overlapping variant of the Schwarz alternating method as a mechanism to perform flexible concurrent DD-based coupling in the context of PDE-based modeling and simulation (mod/sim) of scenarios where overlapping DD-based methods are not feasible.  A well-known disadvantage of non-overlapping Schwarz is the fact that it is slower and less robust than overlapping Schwarz, often requiring more iterations to reach convergence or not converging at all.  This is especially problematic for PDEs with highly heterogeneous coefficients, discretized using fine meshes, and/or posed on long and thin sub-domains \cite{gander2015optimized}.
%furthermore,
%more iterations are typically required to achieve convergence than the overlapping Schwarz variant \irinanote{Should add a reference here too}. 

To mitigate the non-overlapping Schwarz convergence issues described above, various  authors have
%Hence, subsequent research %, including that of Deng and Russier \cite{deng2003nonoverlapping} and Lui \cite{lui2001accelerated},  
%has 
explored acceleration strategies and algorithmic improvements for non-overlapping alternating Schwarz-based DD methods.  
%, which are particularly relevant in practical computations where overlapping sub-domains are not feasible.  
The so-called optimized Schwarz methods define a well-known class of accelerated Schwarz techniques based on the Robin-Robin formulation \cite{Lions:1990}, in which 
optimal values of the parameters pre-multiplying the Dirichlet and Neumann parts of the Robin BCs are derived \cite{gander2008schwarz}.  The main disadvantage 
of these approaches is that the optimal parameter values are generally problem-specific and require lengthy analytical derivations.
%: optimal parameters for Dirichlet and Neumann parts are often derived analytically and are problem-specific. For a detailed review, see .  
%in which optimal values of the parameters multiplying the Dirichlet and Neumann parts of the Robin transmission conditions are derived.  These derivations are often analytical and hence problem-specific.  For a detailed overview of existing optimized Schwarz methods and their theoretical foundations, the interested reader is referred to \cite{gander2008schwarz}.   
%Optimized Schwarz methods represent a more recent development in this area, aiming to improve convergence rates by using optimized transmission conditions at the interfaces between sub-domains. Gander provides an in-depth discussion of these techniques and their theoretical foundations in \cite{gander2008schwarz}, which has become a standard reference in the field.
Within the alternating Dirichlet-Neumann Schwarz framework, it has been shown that convergence can be accelerated through the addition of a 
relaxation step to the Dirichlet sub-problem \cite{toselli2004domain}.  Although several authors,   
%Within the relaxed alternating Dirichlet-Neumann Schwarz framework, various authors, 
e.g., %More recent studies, including 
\cite{cote2005comparison} and \cite{kwok2014neumann}, have investigated automatic strategies for choosing the relaxation parameter, the method is not particularly robust 
with respect to the choice of this parameter; moreover, the optimal value of this parameter is often very problem- and DD-specific.
%convergence can be extremely 
%sensitive to the value of this parameter and choosing the optimal value for a given problem is unclear.

The present work explores and analyzes both numerically and theoretically two alternative approaches for accelerating the Dirichlet-Neumann form of the non-overlapping Schwarz alternating method: Aitken \cite{deparis2004numerical} and Anderson \cite{walker2011anderson} acceleration.  
%order to apply these techniques, it is necessary to first re-write the classical Schwarz algorithm as a fixed-point iteration.  
We choose to focus on the Dirichlet-Neumann (rather than the Robin-Robin) variant of Schwarz, as Dirichlet and Neumann conditions are more readily available in physics-based PDE codes; for example, all solid mechanics codes come equipped with Neumann (traction) BCs, whereas Robin BCs are generally not used within this domain, as they do not have a physical interpretation.  
Since Aitken and Anderson acceleration are designed for speeding up solvers based on fixed-point iterations, applying these methods to Schwarz requires rewriting the classical Schwarz algorithm as a fixed-point iteration.
To the authors' knowledge, the earliest reference that considers an Aitken-accelerated Schwarz method is \cite{Garbey:2002}; however, like most references on this topic, e.g., \cite{Berenguer:2022}, attention is restricted to the overlapping DD case. 
Prior attempts to speed up Schwarz using Anderson acceleration, e.g., \cite{walker2011anderson}, have also focused on the overlapping variant of this method.  
Within the non-overlapping DD setting, Aitken acceleration 
has been shown as a valid alternative for accelerating the Schwarz algorithm in the context of fluid-structure interaction problems in \cite{deparis2006domain}.  
We are not aware of references that attempt to apply Anderson acceleration to non-overlapping Schwarz.  We are also not aware of publications that perform 
a systematic comparison of classical relaxation, Aitken acceleration and Anderson acceleration as mechanisms to speed up the non-overlapping Schwarz algorithm.

%We present both theoretical insights and numerical evidence illustrating how different choices of relaxation parameters and acceleration schemes modify the mathematical properties of the transmission operator at the interface, thereby influencing the convergence and performance of the Schwarz algorithm. 

Toward this effect, the main novel contributions of this work are as follows: (i) the first (to our knowledge) adaptation of Anderson acceleration to the non-overlapping Dirichlet-Neumann Schwarz alternating method and in particular,  the development of an adaptive variant, termed ``Anderson with memory adaptation'', which gives rise to a more robust and efficient Schwarz scheme (see Section \ref{sec:non_ovl}); (ii) some theoretical analyses of  Aitken-accelerated and Anderson-accelerated non-overlapping Dirichlet-Neumann Schwarz alternating method applied to a one-dimensional (1D)
%specific 
study case (see Section \ref{sec:non_ovl}); %, including derivations that demonstrate how the relaxation/acceleration schemes considered modify the mathematical properties of the transmission operator at the interface; %, and calculations of the methods' the convergence rates; 
and (iii) numerical studies comparing Aitken-accelerated and Anderson-accelerated Schwarz to Schwarz with classical relaxation, which verify numerically the theoretical properties derived herein on problems with DDs consisting of as many as five sub-domains (see Section \ref{sec:num_results}). %\irinanote{TODO check that Giulia agrees with enumerated contributions}.  The remainder of this paper is structured as follows. In Section \ref{sec:non_ovl}, we introduce the mathematical notation, present the classical alternating Dirichlet-Neumann Schwarz method in the non-overlapping case, and  summarize the different relaxation/acceleration techniques evaluated.  Section \ref{sec:num_results} presents a collection of theoretical and numerical results, which demonstrate the impact of acceleration on the convergence of the Schwarz algorithm. A few concluding remarks are provided in Section \ref{sec:conclusions}.
Conclusions and some avenues for future work are discussed in Section \ref{sec:conclusions}, after the main exposition.

\section{Non-overlapping Schwarz for DD-based coupling}
\label{sec:non_ovl}
\subsection{Problem formulation}
% Explain why non-overlapping domain decomposition
We begin by introducing our non-overlapping DD notation. We consider a domain $\Omega \subset \mathbb{R}^d$ with Lipschitz boundary $\partial \Omega$ and $x \in \bar{\Omega}$ a space variable; in the present work, attention is restricted to the case when $d=2$ (Figure \ref{fig:non_ovl}(a)). 
%We also consider a Hilbert space 
Let $\mathcal{X}= [H_0^1(\Omega)]^d$ denote a Hilbert space endowed with the norm $\|v\|_{\mathcal{X}}:=\|v\|_{H^1(\Omega)}$.  
Consider a generic elliptic PDE with variational form 
%We introduce the notation for stationary problems described by elliptic PDEs: we assume that the global problem can be written in variational form as follows:
\begin{equation}
\mathcal{R}(u,v)=0 \quad \forall v \in \mathcal{X},
\label{eq:glo_pb}
\end{equation}
where $\mathcal{R}(\cdot, \cdot)$ is a generic nonlinear operator.  
We assume that the problem \eqref{eq:glo_pb} is well-posed and has a unique solution.
 Let \( \mathbf{n} \) denote the outward unit normal on \( \partial \Omega \).

Suppose we are given a non-overlapping decomposition of $\Omega$ into $N_{\rm{dd}}$ open sub-domains $\{\Omega_i\}_{i=1}^{N_{\rm{dd}}}$ such that
\[
\Omega = \bigcup_{i=1}^{N_{\text{dd}}} \overline{\Omega}_i, \quad \Omega_i\cap \Omega_j= \emptyset \quad \text{for } i \ne j.
\] 
Without loss of generality, we will assume in the majority of our presentation that $N_{\rm{dd}} = 2$, that is, the domain $\Omega$ is decomposed into two non-overlapping sub-domains, as shown in Figure \ref{fig:non_ovl}(b)\footnote{Note that the $N_{\rm{dd}}>2$ case is explored numerically in Section \ref{par:multi_domain}.}.  %; although, in the numerical section, also $N_{\rm{dd}}\geq2$ partitions are taken into account.
The interface that separates the two sub-domains is denoted by $\Gamma$, so that  $\Gamma := \bar{\Omega}_1 \cap \bar{\Omega}_2$.
We assume that the boundaries $\Gamma$, $\partial \Omega_1 \setminus \Gamma$ and $\partial \Omega_2 \setminus \Gamma$ all have a non-vanishing $(d-1)$-dimensional measure.
The vectors $\mathbf{n}_i$, $i=1,2$, are the unit outward normals of $\Omega_i$ on $\Gamma$, so that $\mathbf{n}_1=-\mathbf{n}_2$. The normal derivative is denoted by $\frac{\partial{u}}{\partial \mathbf{n}}=\nabla u \cdot \mathbf{n} \in H^{-1/2}(\Gamma)$.
We denote the external boundaries, also referred to as the system boundaries, by $\partial \Omega_i^{\rm{e}}=\partial \Omega_i \cap \partial \Omega$  for $i=1, \ldots, N_{\rm{dd}}$.
%We define the external boundaries $\Omega^{\rm{e}}_i$ as $\partial \Omega_i^{\rm{e}}=\partial \Omega_i \cap \partial \Omega$ for each component $i=1, \ldots, N_{\rm{dd}}$; on these boundary, Dirichlet or Neumann conditions are derived from the global domain (a possible representation is depicted in Figure \ref{fig:non_ovl}(a), together with a non overlapping decomposition in Figure \ref{fig:non_ovl}(b).
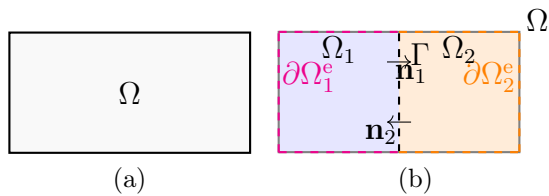
\begin{figure}[h!]
\centering
\subfloat[]{
\begin{tikzpicture}[scale=0.8]
   \fill[gray!5] (-2,-1) rectangle (2,1); % Global domain Omega
    
    % Domain boundary
    \draw[thick] (-2,-1) rectangle (2,1);

    % Labels for sub-domains
    \node at (0,0) {$\Omega$};
\end{tikzpicture}
}
\subfloat[]{
\begin{tikzpicture}[scale=0.8]
% Draw the left and right sub-domains with reduced height
    \fill[blue!10] (-2,-1) rectangle (0,1); % Omega_1
    \fill[orange!15] (0,-1) rectangle (2,1);   % Omega_2

    % Interface boundary (vertical dashed line)
    \draw[thick, dashed] (0,-1) -- (0,1) node[below right] {$\Gamma$};
    % Domain boundary
    \draw[thick, gray] (-2,-1) rectangle (2,1);

    % Labels
    \draw[thick, dashed, magenta] (-2,-1) -- (-2,1);
    \draw[thick, dashed, magenta] (-2,1) -- (0,1);
     \draw[thick, dashed, magenta] (-2,-1) -- (0,-1);
     \node at (-1.5,0.25) {\textcolor{magenta}{$\partial \Omega_1^{\rm{e}}$}};

    \draw[thick, dashed, orange] (2,-1) -- (2,1);
    \draw[thick, dashed, orange] (0,1) -- (2,1);
     \draw[thick, dashed, orange] (0,-1) -- (2,-1);
     \node at (1.5,0.25) {\textcolor{orange}{$\partial \Omega_2^{\rm{e}}$}};
     
    \node at (-1,0.7) {$\Omega_1$};
    \node at (1,0.7) {$\Omega_2$};
    \node at (2.3,1.2) {$\Omega$};
    \node at (0.2,0.3) {$\mathbf{n}_1$};
    \node at (-0.3,-0.7) {$\mathbf{n}_2$}; 
    % Interaction arrows across Gamma
    \draw[->] (-0.2,0.5) -- (0.2,0.5);
    \draw[->] (0.2,-0.5) -- (-0.2,-0.5);
\end{tikzpicture}
}
\caption{(a) Global domain. (b) Sketch of a non-overlapping decomposition of the domain $\Omega$ into $\Omega_1$ and $\Omega_2$ with interface $\Gamma$.}
\label{fig:non_ovl}
\end{figure}
%We introduce 

After introducing the following spaces
\begin{equation} \label{eq:spaces} 
\mathcal{X}_i=\{v|_{\Omega_i}, v \in \mathcal{X}\}= [H^1(\Omega_i)]^d , \; 
\mathcal{X}_{i,0}=\{v \in \mathcal{X}_i \, | \, v =0 \, \text{at} \, \Gamma_{i,\rm{dir}}\}, 
\end{equation}
(where $\Gamma_{i,\rm{dir}} \subset \partial \Omega_i$ is the local boundary associated with Dirichlet boundary conditions for subdomain $\Omega_i$),
the local sub-problems corresponding to the global problem \eqref{eq:glo_pb} can be written in variational form as follows:
\begin{equation}
\mathcal{R}_i(u_i,v)=0 \quad \forall v \in \mathcal{X}_{i,0},
\label{eq:loc_pb}
\end{equation}
where $\mathcal{R}_i:\mathcal{X}_i\rightarrow \mathbb{R}$ is the restriction of the global operator $\mathcal{R}$ to $\Omega_i$.
We equip the local Hilbert spaces $\mathcal{X}_i$ with the norms $\|v\|_{\mathcal{X}_i}=\|v\|_{H^1(\Omega_i)}$ for $i=1, \ldots, N_{\rm dd}$.  %we also adopt $\|g\|_{\mathcal{W}}=\|g\|_{L^2(\Gamma)}$ 
Herein, we will consider finite element (FE) discretizations of the global and local domains.  Within this context, let 
%We consider finite element (FE) discretizations of the global and local domains:  we denote by 
$\mathcal{V}\subset \mathcal{X}$ and $\mathcal{V}_i\subset \mathcal{X}_i$  denote the discrete global and local FE spaces, respectively, 
and assume that each sub-domain 
%For each sub-domain we introduce 
%is discretized %with a $\mathbb{P}_k$ discretization 
discretization is based on 
%with 
nodal FE bases $\{\boldsymbol{\varphi}^{j}_{i}\}_{j=1}^{N_i}$.  The discrete finite element spaces %such that the FE spaces %$\mathcal{V}_i$ 
can 
now 
be expressed as $\mathcal{V}_i=\text{span}\{\boldsymbol{\varphi}_i^j\}_{j=1}^{N_i}$, where $N_i$ is the number of nodes in the $i^{th}$ sub-domain. 

One advantage of the Schwarz alternating method for coupling is that it allows the sub-domains $\Omega_i$ to be discretized by different non-conformal meshes
%We consider the general case of finite element non-conforming discretizations 
(c.f., \cite{mota2025fundamentally,zappon2024reduced}). 
%; as a result, each sub-domain
%: each domain 
%$\Omega_i$ can have its own local representation of the interface $\Gamma$.  
Consequently, each sub-domain can maintain a distinct discrete representation of the shared interface. We therefore introduce the notation $\Gamma_{ij}$ to denote the discrete representation of $\Gamma$ that conforms to the local mesh of $\Omega_i$.

Let $\mathcal{U}_i=\{v|_{\Gamma_{ij}}: v \in \mathcal{V}_i\} \subset{H^{1/2}(\Gamma_{ij})}$  denote the discrete trace, defined as the restriction of the FE space $\mathcal{V}_i$ to the local interface $\Gamma_{ij}$.
%In $\Gamma_{ij}$, we construct a one-dimensional FE approximation and we denote by $N^{\rm{c}}$ the degrees of freedom of the discretized space $\mathcal{U}_i\subset{H^{1/2}(\Gamma_{ij})}$; we have $\mathcal{U}_i=\{v|_{\Gamma_{ij}}: v \in \mathcal{V}_i\} $ as the span of nodal basis functions restricted to $\Gamma_{ij}$.
We denote the standard discrete trace operator onto this space by $\gamma_i: \mathcal{V}_i \to \mathcal{U}_i$ for $i=1,\ldots, N_{\rm{dd}}$. 
To adapt the notation to non-conforming interfaces, we also introduce the composition operator $\gamma_j^i: \mathcal{V}_j \rightarrow \mathcal{U}_i$ such that  $\gamma^i_j(\mathbf{u}_j)=\Pi_{ij}\circ \gamma_j (\mathbf{u}_j)$, where the projection
operator $\Pi_{ij}:\mathcal{U}_{j}\rightarrow \mathcal{U}_i$ is used to compute the trace of the solution $\mathbf{u}_j$ on the interface $\Gamma_{ij}$ of sub-domain $\Omega_i$.  
Finally, we introduce the discrete Neumann operator $\Psi_i : \mathcal{V}_i \rightarrow \mathcal{U}_i^{\star}$, which evaluates the discrete normal derivative on the interface $\Gamma_{ij}$. Acting as the discrete counterpart to the continuous normal trace $\partial_{\mathbf{n}_i} u_i \in H^{-1/2}(\Gamma_{ij})$, the operator $\Psi_i$ is defined in a weak sense and takes values in $\mathcal{U}_i^{\star}$, the formal dual space of $\mathcal{U}_i$.

\subsection{Dirichlet-Neumann Schwarz scheme}
\label{sec:classic_DirNeu}
Herein, we overview the classical Dirichlet-Neumann version of the non-overlapping Schwarz alternating method.  
We describe the algorithm for the simple case $N_{\rm{dd}}=2$, as shown in Figure \ref{fig:non_ovl}(b). The solution to the global problem \eqref{eq:glo_pb} on $\Omega$ is obtained via an iterative process in which a
sequence of sub-domain-local problems is solved, with information propagating between sub-domains through transmission BCs.
Algorithm \ref{alg:DN_relax} summarizes this procedure. 
Let the maximum number of iterations $\texttt{maxit} \in \mathbb{N}_0$ be given, as well as tolerances $\epsilon_{\rm{abs}} >0$ and $\epsilon_{\rm{rel}} >0$.
We define the discrete residual operator $\mathcal{R}_i^{h}:\mathcal{V}_i\times\mathcal{V}_i\rightarrow \mathbb{R}$
as the restriction of the continuous residual operator $\mathcal{R}_i$ \eqref{eq:loc_pb} to the discrete spaces $\mathcal{V}_i$. For simplicity of notation, we will drop the superscript $h$ from this point forward, so that $\mathcal{R}_i(\mathbf{u}_i^{(k)},\mathbf{v})$ denotes the discrete residual computed at Schwarz iteration $k$ (c.f., \eqref{eq:loc_pb}).  
The Schwarz iteration process requires an  initial value of the interface trace at $\Gamma_{21}$, i.e., $\gamma_2(\mathbf{u}_2) \in \mathcal{U}_2$, which we denote as  $\mathbf{g}^{(1)}$, where the superscript points out the fact that these data are used at the first iteration $k=1$. 

\begin{remark}
As clear from \eqref{eq:spaces}, the space $\mathcal{X}_{i,0}$ incorporates homogeneous Dirichlet BCs on the external boundary $\partial \Omega_{i}^{\rm{e}}$, meaning 
%.  The corresponding (possibly non-homogeneous) Dirichlet BCs are then 
they can be imposed strongly by defining the trial space $\mathcal{X}_i$.  Neumann BCs are accounted for through the residual operator $\mathcal{R}_i$. Thus, in Algorithm \ref{alg:DN_relax}, only interface BCs are explicitly written. 
\end{remark}

\begin{algorithm}[h]
\caption{Dirichlet--Neumann scheme with relaxation.}
\label{alg:DN_relax}
\begin{algorithmic}[1] % The number tells where line numbering should start
\Require Initial guess \( \mathbf{g}^{(1)} \), \( \texttt{maxit} \), \( \epsilon_{\text{rel}} \), $\epsilon_{\text{abs}}$, \( \rho \in (0,1] \), \( k = 0 \)
%\Ensure Outputs: \( \mathbf{u}_1, \mathbf{u}_2 \)
\Repeat
 \State Increment \( k \leftarrow k + 1 \)
   \State \textbf{Dirichlet step in \( \Omega_1 \):} solve \label{alg_line:dirichlet}
    \[
    \begin{cases}
    \mathcal{R}_1(\mathbf{u}_1^{(k)}, \mathbf{v}) = 0 \quad \forall \mathbf{v} \in \mathcal{X}_{1,0} & \text{in } \Omega_1 \\
    \gamma_1(\mathbf{u}_1^{(k)}) = \Pi_{12}\mathbf{g}^{(k)} & \text{on } \Gamma_{12}
    \end{cases}
    \]
  \State \textbf{Neumann step in \( \Omega_2 \):} solve \label{alg_line:neumann}
    \[
    \begin{cases}
    \mathcal{R}_2(\mathbf{u}_2^{(k)}, \mathbf{v}) = 0 
    \quad \forall \mathbf{v} \in \mathcal{X}_{2,0} &
    \text{in } \Omega_2 \\
   \Psi_2(\mathbf{u}_2^{(k)}) =- \Pi_{21} \Psi_1(\mathbf{u}_1^{(k)}) & \text{on} \, \Gamma_{21} 
    \end{cases}
    \]
    \State \textbf{Classical relaxation}: Update
    $\mathbf{g}^{(k+1)} = (1 - \rho) \mathbf{g}^{(k)} + \rho \,  \gamma_2(\mathbf{u}_2^{(k)})$ \label{line:relax}
   \Until {$e^{(k)}_{\rm{abs}}>\epsilon_{\rm{abs}}$ \textbf{and} $e^{(k)}_{\rm{rel}}>\epsilon_{\rm{rel}}$ \textbf{and} \( k < \texttt{maxit} \)} \label{line:conv_criterion}
\State \textbf{Output:} $\mathbf{u}_1^{(k)}, \mathbf{u}_2^{(k)}$
\end{algorithmic}
\end{algorithm}

%\irinanote{I don't know if the ``Ensure'' line in the algorithms is necessary and if it is intended to give the outputs. You give the outputs always at the end of the algorithm.  I therefore removed the ``Ensure'' lines.}

The interaction between sub-domains is encoded in the compatibility constraint on the continuity of the solution on interface $\Gamma_{12}$, and on the continuity of normal fluxes on $\Gamma_{21}$.
Since the Dirichlet-Neumann iteration procedure gives an infinite sequence, it is necessary to define convergence criteria for the algorithm.  The convergence criteria employed herein are given in Algorithm \ref{alg:DN_relax}, line $6$. Here, $\epsilon_{\rm{abs}} $ and $\epsilon_{\rm{rel}}$ are pre-specified absolute and relative tolerances, respectively, and 
%We consider the following interface absolute $e_{\rm{abs}}^{(k)}<\epsilon_{\rm{abs}}$ and relative errors $e^{(k)}_{\rm{rel}}<\epsilon_{\rm{rel}}$ as stopping criteria for the pre-specified tolerances $\epsilon_{\rm{abs}},\epsilon_{\rm{rel}}$:
\begin{equation}
\begin{aligned}
e^{(k)}_{\text{abs}} := 
\sqrt{
\left\| \gamma_1(\mathbf{u}_1^{(k)})- \gamma_1(\mathbf{u}_1^{(k-1)}) \right\|_2^2 
+ 
\left\| \gamma_2(\mathbf{u}_2^{(k)}) - \gamma_2(\mathbf{u}_2^{(k-1)}) \right\|_2^2
},\\[2pt]
e^{(k)}_{\text{rel}} := 
\sqrt{
\nicefrac{ \left\| \gamma_1(\mathbf{u}_1^{(k)}) - \gamma_1(\mathbf{u}_1^{(k-1)}) \right\|_2^2 }{ \left\| \gamma_1(\mathbf{u}_1^{(k)}) \right\|_2^2 }
+ 
\nicefrac{ \left\| \gamma_2(\mathbf{u}_2^{(k)}) - \gamma_2(\mathbf{u}_2^{(k-1)}) \right\|_2^2 }{ \left\| \gamma_2(\mathbf{u}_2^{(k)}) \right\|_2^2 }
}.
    \label{eq:error_def}
\end{aligned}
\end{equation}
We choose to adopt stopping criteria based solely on interface quantities, as the Dirichlet–Neumann iteration is governed by the satisfaction of compatibility conditions on $\Gamma_{12}$ and $\Gamma_{21}$.  We note that alternative convergence criteria based on full sub-domain solution norms, such as those in 
\cite{mota2017schwarz, Mota:2022}, are also possible; however, 
%: for this reason, we adopt stopping criteria based solely on interface quantities, which provide a sufficient measure of convergence of the coupling iteration. 
it can be shown that control of the error/traces on the interfaces implies control of the error in the full subdomains, hence interface-based stopping criteria are equivalent to sub-domain-based ones up to constants depending on the problem and norms used \cite{QuarteroniValli1999, ToselliWidlund2005}.  
%Once the interface traces converge, the corresponding sub-domain solutions follow by stability of the underlying boundary value problems \irinanote{What do you mean by ``stability'' here?  I don't think it is the right term to use.}.
%  \irinanote{Is there a reference that shows equivalence regardless whether you use interface of full sub-domain convergence criteria?}
%Alternative criteria based on full sub-domain norms, such as those in 
%\cite{mota2017schwarz, Mota:2022}, the authors impose a stronger but not required notion of convergence involving the solution norm at each entire sub-domain.
In all of our numerical experiments, the proposed interface-based criteria were sufficient to reliably detect convergence.

At this point, it is worthwhile to point the reader's attention to   line \ref{line:relax} in Algorithm \ref{alg:DN_relax}, which introduces a common Schwarz acceleration technique referred to herein as ``classical relaxation''.  It has been shown in several references, e.g., \cite{zanolli1987domain, funaro1988iterative}, that adding relaxation of this form to the Dirichlet step of the Schwarz iteration procedure can speed up, and, in some cases, enable, convergence.  
The value $\rho \in (0,1]$ in Algorithm \ref{alg:DN_relax}, line \ref{line:relax}, known as the ``relaxation parameter'', is a constant pre-specified value, and one of the tuning knobs in the algorithm.   
%In order to ensure and speed up the convergence of the
%algorithm, a classical relaxation \cite{zanolli1987domain, funaro1988iterative} of the Dirichlet data at the interfaces is proposed at line \ref{line:relax} at the end of each cycle:
%we denote as $\rho \in (0,1]$ the relaxation parameter for the update of the transmission BCs. 
%We emphasize that, in the classical relaxation approach, $\rho$ is a constant, pre-specified value.  
The scheme in Algorithm \ref{alg:DN_relax} has been shown to converge in the case of coercive elliptic problems, as well as for problems described by skew-symmetric bilinear forms where the skew-symmetric part is small enough, c.f., \cite{marini1989relaxation,quarteroni1999domain}.

\begin{remark} \label{remark:additive}
   The approach described in Algorithm \ref{alg:DN_relax} solves the Dirichlet and Neumann problems (lines \ref{alg_line:dirichlet} and \ref{alg_line:neumann}, respectively) sequentially; that is, the $\Omega_2$ solve at iteration $k$ cannot proceed until $\mathbf{u}_1^{(k)}$ has been calculated.
    %The scheme in Algorithm \ref{alg:DN_relax} computes the solution at the Neumann step by using the updated solution $\mathbf{u}_1^{(k)}$ in the first sub-problem: 
    This procedure is commonly referred to in the literature as ``multiplicative Schwarz''. An alternative option,  referred to as ``additive Schwarz'', relies on the previously computed solution $\mathbf{u}_1^{(k-1)}$ when solving the Neumann problem in line \ref{alg_line:neumann} of Algorithm \ref{alg:DN_relax}, making the algorithm embarrassingly parallel. It can be shown that the multiplicative and additive Schwarz variants converge to the same solution, though the latter method often requires more iterations to reach convergence \cite{ToselliWidlund2005, SmithBjorstadGropp1996}. Herein, we focus on the multiplicative Schwarz scheme, %due to 
    %its more favorable convergence properties, together with our ability 
    %the fact that it is possible to 
    as it is possible to 
    formulate various relaxed and accelerated versions of this method as a fixed-point iteration (see Section \ref{sec:fixed_point} and Remark \ref{remark:additive_fp}), which enables certain theoretical analyses (Section \ref{sec:acceleration}).  
    %We opt to focus on the multiplicative variant, as it not only exhibits a faster convergence rate compared to the additive one (\cite{carlberg2019network}) but is also more intuitive.
\end{remark}

\subsection{Reformulation of multiplicative Schwarz as a fixed-point iteration method}
\label{sec:fixed_point}
The Dirichlet and Neumann BCs in Algorithm \ref{alg:DN_relax}
 can be seen as the outputs of suitable Dirichlet-to-Neumann (DtN)  and Neumann-to-Dirichlet  (NtD) maps, which we define as
 $\Lambda: \mathcal{U}_1 \rightarrow \mathcal{U}_1^{\star}$,
$ \Upsilon: \mathcal{U}_2^{\star} \rightarrow \mathcal{U}_2$
 respectively, where 
 \begin{subequations}
     \begin{align}
 \Lambda: \Pi_{12}\gamma_2(\mathbf{u}_2) \mapsto \Psi_1(\mathbf{u}_1),\\
 \Upsilon: -\Pi _{21}\Psi_1(\mathbf{u}_1) \mapsto \gamma_2(\mathbf{u}_2).
 \label{eq:DtN_NtD}
 \end{align}
The DtN and NtD operators are understood as solution operators corresponding to their respective sub-domain problems. After introducing these maps, we can rewrite 
 \[
 \gamma_2(\mathbf{u}_2^{(k)})=\Upsilon(-\Pi_{21}\Psi_1(\mathbf{u}_1^{(k)}))=\Upsilon(-\Pi_{21}(\Lambda(\Pi_{12}\mathbf{g}^{(k)}))),
 \]
 so that, from the relaxation formula in line \ref{line:relax} of Algorithm \ref{alg:DN_relax}, we obtain
 \begin{equation}
     \mathbf{g}^{(k+1)}=(1-\rho)\, \mathbf{g}^{(k)}\, + \, \rho \Upsilon\Big(-\Pi_{21}\big(\Lambda(\Pi_{12}\mathbf{g}^{(k)})\big)\Big),
     \label{eq:fixed_point_relax}
 \end{equation}
    a fixed-point equation on $\mathbf{g}$.
   Now, we define the operator $T: \mathcal{U}_2 \rightarrow \mathcal{U}_2$ such that 
    \begin{equation}
   T(\mathbf{g})=\Upsilon\Big(-\Pi_{21}\big(\Lambda(\Pi_{12}\mathbf{g})\big)\Big).
   \label{eq:fixed_point_operator}
   \end{equation}
   Importantly, equation \eqref{eq:fixed_point_relax} can be rewritten as
   \begin{equation}  
   \mathbf{g}^{(k+1)}=T^{\rm{r}}_{\rho}(\mathbf{g}^{(k)})
    \label{eq:fp_general}
   \end{equation}
   for a suitable operator $T^{\rm{r}}_{\rho}: \mathcal{U}_2 \rightarrow \mathcal{U}_2$.  This generalizes the standard method into a classical one-step fixed-point iteration for any suitable choice of $\rho$.
    \end{subequations}
  
   Thanks to the reformulation in \eqref{eq:fixed_point_relax}, we can restate Algorithm \ref{alg:DN_relax} in a more concise form of a fixed-point iteration algorithm, as done in Algorithm \ref{alg:DN_relax_fixed_point}.  
   As observed in \cite{discacciati2024nonintrusive}, the formulation in \eqref{eq:fixed_point_relax} does not involve the values of the solutions inside the sub-domains $\Omega_i$, as all the operators are defined on the interface $\Gamma$. Hence, it is possible to apply the Schwarz alternating method using only interface data, and recover the full sub-domain solutions $\mathbf{u}_1$ and $\mathbf{u}_2$ at the end of the iteration procedure (Algorithm \ref{alg:DN_relax_fixed_point}).  The absolute and relative errors appearing in line \ref{line:conv_criterion} of Algorithm \ref{alg:DN_relax_fixed_point} can be computed using formula \eqref{eq:error_def} with $\gamma_1(\mathbf{u}_1^{(k)})=\Pi_{12}\mathbf{g}^{(k)}$ and $\gamma_2(\mathbf{u}_2^{(k)})=T(\mathbf{g}^{(k)})$.
   
   %However, the application of Schwarz to mechanical systems and possibly to multi-physics systems motivates the reconstruction of the solutions in these sub-domains, which can be recovered after the fixed-point problem has converged. Furthermore, we believe that the final comparison of the local solutions (found by Schwarz algorithm) with the global one represents an important tool to assess the numerical validity of the method.  Thus, we write $\mathbf{u}_1$ and $\mathbf{u}_2$ as outputs of Algorithm \ref{alg:DN_relax_fixed_point} at line \ref{alg_line:fp_output} once $\mathbf{g}$ is computed at convergence.\\
 \begin{algorithm}[h]
\begin{algorithmic}[1] % The number tells where line numbering should start
\Require Initial guess \( \mathbf{g}^{(1)} \), \( \texttt{maxit} \), $\epsilon_{\text{rel}} $, $\epsilon_{\text{abs}}$,  \( \rho \), \( k = 0 \)
%\Ensure Outputs: \( \mathbf{u}_1, \mathbf{u}_2 \)
\Repeat
 \State Increment \( k \leftarrow k + 1 \)
 \State Update $\mathbf{g}^{(k+1)}=(1-\rho) \mathbf{g}^{(k)}+\rho T(\mathbf{g}^{(k)})$ \label{line:fp-iter}
  \Until{$e^{(k)}_{\rm{abs}}>\epsilon_{\rm{abs}}$ \textbf{and} $e^{(k)}_{\rm{rel}}>\epsilon_{\rm{rel}}$ \textbf{and} \( k < \texttt{maxit} \)} \label{alg_line:criterion}
\State \textbf{Output:} $\mathbf{u}_1^{(k)}, \mathbf{u}_2^{(k)}$ \label{alg_line:fp_output}
\end{algorithmic}
  \caption{Dirichlet-Neumann scheme (with relaxation) as a fixed-point iteration.}
  \label{alg:DN_relax_fixed_point}
  \end{algorithm}
% The errors critieria in Algorithm \ref{alg:DN_relax_fixed_point} computed at each Schwarz iteration $k$ are based on the formula in \eqref{eq:error_def}: to compute that, it is sufficient to replace $\gamma_1(\mathbf{u}_1^{(k)})=\Pi_{12}\mathbf{g}^{(k)}$ and $\gamma_2(\mathbf{u}_2^{(k)})=T(\mathbf{g}^{(k)})$.\\
%The expression in  \eqref{eq:fixed_point_relax} has been found for the classical relaxation of the datum $\mathbf{g}$. One may wonder what other reformulations can be obtained with different relaxation/acceleration formulas.  In the next sub-section, we comment about the reformulation of Schwarz in terms of the DtN and NdT maps in the case of different types of updates.

\begin{remark} \label{remark:additive_fp}
    In the case of the additive variant of the Schwarz alternating method (see Remark \ref{remark:additive}), achieving the one-step fixed-point iteration \eqref{eq:fp_general} is more nuanced.  % for the additive variant (see Remark \ref{remark:additive}). 
    Because the local sub-problems are solved simultaneously, the transmission conditions evaluated on $\Gamma_{ij}$ depend on the solutions from the previous iteration step, in particular the Neumann transmission condition would be dependent on $\Psi_1(\mathbf{u}^{(k-1)})$ on $\Gamma_{21}$. Thus, we would obtain, in place of \eqref{eq:fixed_point_relax}, a two-step iterative scheme of the form
    \begin{equation}
    \mathbf{g}^{(k+1)}=(1-\rho) \mathbf{g}^{(k)}+\rho T(\mathbf{g}^{(k-1)}).
    \label{eq:two_step_update}
    \end{equation}
    Although \eqref{eq:two_step_update} is not a fixed-point iteration in the classical sense, it can be rigorously recast as a generalized one-step fixed-point iteration by augmenting the state space into $\bar{\mathbf{g}}=[\mathbf{g}^{(k)}, \mathbf{g}^{(k-1)}]^T$. The additive iteration can then be written as $\bar{\mathbf{g}}^{(k+1)} = \bar{T}(\bar{\mathbf{g}}^{(k)})$, where the augmented operator $\bar{T}$ is defined as 
    \begin{equation} 
    \bar{T} \begin{pmatrix} \mathbf{v}_1, \ \mathbf{v}_2 \end{pmatrix} := \begin{pmatrix} (1-\rho) \mathbf{v}_1 + \rho T(\mathbf{v}_2), \quad \mathbf{v}_1\end{pmatrix}.
    \label{eq:augmented_operator} 
    \end{equation} 
    It is straightforward to verify that any fixed-point $[\mathbf{v}_1^\star, \mathbf{v}_2^\star]^T$ of this extended operator $\bar{T}$ necessarily satisfies $\mathbf{v}_1^\star = \mathbf{v}_2^\star$, which immediately implies $\mathbf{v}_1^\star = T(\mathbf{v}_1^\star)$.
\end{remark}

\subsection{Schwarz acceleration techniques} 
\label{sec:acceleration}
\subsubsection{Aitken acceleration} \label{sec:aitken}
The classical relaxation introduced in Algorithm \ref{alg:DN_relax}, line \ref{line:relax} 
assumes that $\rho$ is a fixed pre-specified parameter.  The ``optimal'' value of $\rho$
is generally unknown \textit{a priori}, and poor choices of this parameter can lead to convergence issues, 
as discussed in \cite{deparis2004numerical} and \cite{rodriguez2025transmission}.
%does not provide an explicit way to compute $\rho$ and the convergence may suffer from non-optimal choices of this algorithmic parameter.
%The choice of this parameter is crucial for the convergence of the method: in \cite{deparis2006domain}, it has been discussed that if $\rho$ is constant, it must be a function of the domain; otherwise, the algorithm does not converge; also, for small values, the convergence is slowed. 
As demonstrated in \cite{deparis2006domain} for the specific case of a two sub-domain fluid-structure
interaction problem, it is possible to derive a relaxation scheme in which $\rho$ is updated dynamically throughout the Schwarz iteration process by applying Aitken acceleration to the fixed-point version of the Schwarz alternating method (Algorithm \ref{alg:DN_relax_fixed_point}).  
%To remedy this issue, an adaptive choice of the algorithmic parameter $\rho$ at each Schwarz iteration
%represents a valid alternative for the update of the interface datum, in order to ensure or accelerate the convergence.
%Aitken acceleration
%A dynamic relaxation technique based on the generalization to the vector case of Aitken acceleration has been proposed (see \cite{deparis2006domain}) for $2$ domains fluid-structure interaction applications. 
%This technique leads to an automatic choice of the relaxation parameter $\rho^{(k)}$, which is wisely choosen at each Schwarz iteration $k=1, \ldots, \texttt{maxit}$. 
The scheme is defined by first introducing the interface jump $\mathcal{E}:\mathcal{U}_2 \times \mathcal{U}_2 \rightarrow \mathbb{R}^{N^{\rm c} d}$ (where $N^{\rm{c}}$ is the number of interface nodes of $\Gamma_{21}$)
\begin{equation}
\mathcal{E}\left(\gamma_1^1(\mathbf{u}_1), \gamma_2(\mathbf{u}_2)\right)=\gamma_2(\mathbf{u}_2)-\gamma_1^2(\mathbf{u}_1),
\label{eq:int_jump}
\end{equation}
together with the following variables: 
%We can denote as $\mathbf{d}^{(k)}$ and $\boldsymbol{\delta}^{(k)}$ the following expressions:
\[ \mathbf{d}^{(k)}=\left( \gamma^2_1(\mathbf{u}_{1}^{(k)}) - \gamma^2_1(\mathbf{u}_{1}^{(k-1)}) \right),\;
\boldsymbol{\delta}^{(k)}=\mathcal{E}\left(\gamma_1^1(\mathbf{u}^{(k)}_1), \gamma_2(\mathbf{u}^{(k)}_2)\right)-\mathcal{E}\left(\gamma_1^1(\mathbf{u}^{(k-1)}_1), \gamma_2(\mathbf{u}^{(k-1)}_2)\right).\]
Now, to apply Aitken acceleration within the Dirichlet-Neumann Schwarz scheme, line \ref{line:relax} in Algorithm \ref{alg:DN_relax} is replaced by 
%The classical relaxation update expressed at line \ref{line:relax} would be replaced by the following:
 \begin{equation}
 \begin{aligned}
    &\mathbf{g}^{(k+1)} = \mathbf{g}^{(k)} + \rho^{(k)} \mathcal{E}\left(\gamma_1^1(\mathbf{u}^{(k)}_1), \gamma_2(\mathbf{u}^{(k)}_2)\right), \quad \text{where}
   &\rho^{(k)} = -\frac{
   \boldsymbol{\delta}^{(k)} \cdot \mathbf{d}^{(k)}
    }{
    || \boldsymbol{\delta}^{(k)} ||_2^2
    }
    \end{aligned}
    \label{eq:aitken}
    \end{equation}
    Here, $\rho^{(k)}$ denotes the value of the relaxation parameter at the $k^{th}$ Schwarz iteration, with $\rho^{(1)}$ given as an input.
    The value of $\rho^{(k)}$  in \eqref{eq:aitken} minimizes   
 %In \cite{deparis2004numerical}, the authors refer to the choice of $\rho^{(k)}$ according to the formula in \eqref{eq:aitken} as the minimizer of 
\begin{equation}
\mathcal{J}\big(\rho\big)=\Big\|\left(\gamma_1^2(\mathbf{u}_1^{(k)})-\gamma_1^2(\mathbf{u}_1^{(k-1)})\right)+\rho \left(
\gamma_2(\mathbf{u}_2^{(k)})-\gamma_1^2(\mathbf{u}_1^{(k)})-\gamma_2(\mathbf{u}_2^{(k-1)})+\gamma_1^2(\mathbf{u}_1^{(k-1)})
\right)\Big\|^2_2.
\label{eq:aitken_of}
\end{equation}
at each Schwarz iteration $k =1, ...,\texttt{maxit}$ (see  \ref{app:aitken}).  

Our Aitken-accelerated Schwarz scheme is summarized in Algorithm \ref{alg:aitken}.  The reader can observe that we have introduced a parameter $N_0 \in \mathbb{N}$ such that, for $1 < k < N_0$, $\rho^{(k)} = \rho^{(1)}$.  As far as we know, the introduction of $N_0$ into the Aitken acceleration approach has not been considered in past literature.  %\irinanote{Also, was $N_0$ also introduced in \cite{deparis2006domain}?  If so, I will note that.}  In our numerical experiments (Section \ref{sec:num_results}), we explore the performance and robustness of Aitken-accelerated Schwarz to both $\rho^{(1)}$ and $N_0$. 
%, we mathematically justify the employment of this expression and the derivation of its minimizer $\rho$ as in \eqref{eq:aitken}.

%In the numerical tests, we investigate the choice of both the relaxation and acceleration parameters. To this scope, it is convenient to introduce a further hyper-parameter in Aitken acceleration: we denote by $N_0$ the number of iterations such that $\rho^{(k)}=\rho^{(jN_0)}$ where $j=\Big\lfloor \frac{k}{N_0}\Big\rfloor$. As we discuss in the numerical results section, the choice of $N_0$ plays a role in the multi-domain setting: we numerically observed that for $k<N_{\rm{dd}}$, classical relaxations have to be used before switching to Aitken formula, since Aitken acceleration parameters values badly influence Schwarz convergence at these early stage iterations,  while in the two-domain tests, in order to limit the number of tuning knobs in our algorithm we can just set the minimum feasible value which is $N_0=2$. 

\begin{remark}
    It is possible to rewrite the Aitken formula \eqref{eq:aitken} using the previously-defined DtN and NtD maps, as done for the classical relaxation formula in \eqref{eq:fixed_point_relax}:
  %Doing so gives
    \begin{equation}
    \begin{aligned}
    &\mathbf{g}^{(k+1)}=\mathbf{g}^{(k)}+\rho^{(k)} \, \Big[\,T(\mathbf{g}^{(k)})\, - \, \mathbf{g}^{(k)} \Big]  \quad \text{with}
    &\rho^{(k)}=\rho^{(k)}\Big(\mathbf{g}^{(k)}, \mathbf{g}^{(k-1)}\Big).
    \end{aligned}
        \label{eq:fake_fixed_point_aitken}
    \end{equation}
    In \eqref{eq:fake_fixed_point_aitken}, 
    %where, in the second line, 
    we have explicitly expressed the dependence of the parameter $\rho^{(k)}$ on the current and previous interface solutions.
Unlike \eqref{eq:fixed_point_relax}, the iteration in \eqref{eq:fake_fixed_point_aitken}  is not a single fixed-point equation in the classical sense, as  the dynamic parameter $\rho^{(k)}$ is a function of the interface data at iteration $k-1$. A similar discussion to Remark \ref{remark:additive_fp} can be replicated in this case.
     \end{remark}
\begin{algorithm}[h]
\begin{algorithmic}[1] 
\Require Initial guess \( \mathbf{g}^{(1)} \), \( \texttt{maxit} \),$\epsilon_{\text{rel}} $, $\epsilon_{\text{abs}}$, \( \rho^{(1)} \), \( k = 0 \), $N_0$
%\Ensure Outputs: \( \mathbf{u}_1, \mathbf{u}_2 \)
\Repeat
 \State Increment \( k \leftarrow k + 1 \)
%\State Increment \( k \leftarrow k + 1 \)
\If{$k < N_0$}
    \State $\rho^{(k)} = \rho^{(1)}$
\Else
    \State $\rho^{(k)}$ dynamically updated from \eqref{eq:aitken}
\EndIf
\State Update $\mathbf{g}^{(k+1)}=(1-\rho^{(k)}) \mathbf{g}^{(k)}+\rho^{(k)} \left[T(\mathbf{g}^{(k)})-\mathbf{g}^{(k)}\right]$ 
  \Until{$e^{(k)}_{\rm{abs}}>\epsilon_{\rm{abs}}$ \textbf{and} $e^{(k)}_{\rm{rel}}>\epsilon_{\rm{rel}}$ \textbf{and} \( k < \texttt{maxit} \)} \label{alg_line:general_criterion}
\State \textbf{Output:} $\mathbf{u}_1^{(k)}, \mathbf{u}_2^{(k)}$ \label{alg_line:general_fp_output}
\end{algorithmic}
  \caption{Dirichlet-Neumann scheme with Aitken acceleration }
  \label{alg:aitken}
  \end{algorithm}

     \begin{remark} \label{remark:aitken_bounds}
          The Aitken formula for $\rho^{(k)}$ \eqref{eq:aitken} does not guarantee explicit upper or lower bounds for the relaxation parameter. To ensure consistency with the classical relaxation framework (Algorithm \ref{alg:DN_relax}) and to avoid nonphysical values, we enforce in our numerical implementation of Aitken-accelerated Schwarz the following safeguards: (i) we set $\rho^{(k)}=1$ whenever Aitken’s formula \eqref{eq:aitken} gives $\rho^{(k)}>1$, and (ii) we revert to $\rho^{(k)}=\rho^{(1)}$ whenever a negative value is obtained.  %\irinanote{Have others done this, or is it new?}
     \end{remark}

\subsubsection{Analysis of the Aitken-accelerated Schwarz scheme} 
\label{sec:aitken_conv_rate}

Having presented the general Aitken-accelerated Schwarz scheme, we now perform some theoretical analysis to uncover its convergence rate for the specific case of $d=1$. We recall that a $p$-order method converges to its fixed-point $g^{\star}$ if \[ \exists c>0: 
\frac{|g^{(k+1)}-g^{\star}|}{|g^{(k)}-g^{\star}|^{p}} \leq c, \; \forall k \geq k_0
\]
where $k_0$ is a suitable integer and, in the case $p=1$, we need to strictly require $0<c<1$.
\begin{theorem}\label{thm:aitken_conv}
Suppose $d=1$. Let $g^{(k+1)}=T(g^{(k)})$ be the fixed-point iteration of an unrelaxed Schwarz scheme (line \ref{line:fp-iter} in Algorithm \ref{alg:DN_relax_fixed_point} with $\rho =1$). Assuming that the unrelaxed Schwarz method converges linearly ($p=1$), the Aitken-accelerated Schwarz algorithm (Algorithm \ref{alg:aitken}) converges to $g^{\star}$ quadratically ($p=2$).  
\end{theorem}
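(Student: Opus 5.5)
In the scalar case $d=1$ the interface datum $g$ is a real number, so Aitken acceleration reduces to the classical secant step applied to the residual $f(g):=T(g)-g$. The plan is to show this reduction explicitly and then apply the standard local analysis of the secant/Steffensen step, using smoothness of $T$ near $g^\star$.

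\textbf{Step 1: identify the Aitken update as a secant step.} With $d=1$, $\mathbf{d}^{(k)}=g^{(k)}-g^{(k-1)}$, since $\gamma_1^2(\mathbf{u}_1^{(k)})=\Pi_{12}g^{(k)}$ and $\Pi$ is the identity in 1D. The jump is $\mathcal{E}^{(k)}=T(g^{(k)})-g^{(k)}=f(g^{(k)})$. Hence $\boldsymbol{\delta}^{(k)}=f(g^{(k)})-f(g^{(k-1)})$. Formula \eqref{eq:aitken} then gives
\[
\rho^{(k)}=-\frac{g^{(k)}-g^{(k-1)}}{f(g^{(k)})-f(g^{(k-1)})},
\]
and the update \eqref{eq:fake_fixed_point_aitken} reads
\[
g^{(k+1)}=g^{(k)}-f(g^{(k)})\,\frac{g^{(k)}-g^{(k-1)}}{f(g^{(k)})-f(g^{(k-1)})}.
\]
This is exactly the secant method for $f(g)=0$. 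The sign of $\rho^{(k)}$ has to be reconciled with the paper's convention (relative sign of $\mathbf{d}$ and $\boldsymbol{\delta}$) so that the step is the secant step rather than its reflection. I would also note that the algorithm's update is written as $(1-\rho)g+\rho[T(g)-g]$; I would interpret it consistently with \eqref{eq:fake_fixed_point_aitken}.

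\textbf{Step 2: error recursion.} Linear convergence of the unrelaxed scheme means $T'(g^\star)=:\lambda$ with $|\lambda|<1$. Therefore $f'(g^\star)=\lambda-1\neq 0$, so $g^\star$ is a simple root of $f$. Set $e_k=g^{(k)}-g^\star$ and Taylor-expand $f$ to second order, assuming $T\in C^2$ near $g^\star$ (true when the sub-problems depend smoothly on the data; for linear problems $T$ is affine). This gives the standard identity
\[
e_{k+1}=\frac{f[g^{(k-1)},g^{(k)},g^\star]}{f[g^{(k-1)},g^{(k)}]}\,e_k e_{k-1}.
\]
The divided-difference ratio is bounded near $g^\star$ by $C\approx|f''(g^\star)|/(2|1-\lambda|)$, so $|e_{k+1}|\le C|e_k||e_{k-1}|$.

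\textbf{Step 3: relate to order $p=2$.} The pure secant recursion yields order $(1+\sqrt5)/2$, not 2, so the claim needs the specific structure of the method. I would pass to the Steffensen viewpoint: in the Aitken-accelerated Schwarz sweep, $g^{(k-1)}$ and $g^{(k)}$ are (up to the previous relaxation) related by one application of $T$. Then $e_{k-1}$ and $e_k$ are comparable, via $g^{(k)}-g^\star\approx\lambda\,(g^{(k-1)}-g^\star)$ along the unaccelerated substep. Combined with $|e_{k+1}|\le C|e_k||e_{k-1}|$, this gives $|e_{k+1}|\le C'|e_k|^2$, i.e.\ the Aitken $\Delta^2$/Steffensen quadratic estimate. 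In the linear (affine $T$) case, $f''=0$, so the error vanishes after one Aitken step; quadratic convergence then holds trivially. I would present that case first as the model.

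\textbf{Main obstacle.} The hard part is Step 3: justifying that consecutive iterates satisfy $e_{k-1}=O(e_k)$ with a uniform constant, so that superlinear secant order upgrades to $p=2$. The safeguards of Remark~\ref{remark:aitken_bounds} also need handling. I would first try to show that near $g^\star$ the computed $\rho^{(k)}\approx 1/(1-\lambda)$ stays in the admissible range, so the clipping is eventually inactive. If the $e_{k-1}\sim e_k$ comparison fails in general, I would fall back to the affine/Steffensen interpretation, where the two secant points are $g$ and $T(g)$, and prove quadratic convergence there.
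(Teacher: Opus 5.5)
Your Steps~1 and~2 are correct. For $k\ge N_0$, Algorithm~\ref{alg:aitken} in 1D is exactly the secant method for $f(g)=T(g)-g$; the signs in \eqref{eq:aitken} give the secant step, not its reflection. The error identity $e_{k+1}=\frac{f[g^{(k-1)},g^{(k)},g^\star]}{f[g^{(k-1)},g^{(k)}]}\,e_ke_{k-1}$ also holds.

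The gap is Step~3, and it cannot be closed.

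\emph{The structural claim is false.} The single evaluation $T(g^{(k)})$ per iteration only produces $f(g^{(k)})$. Every iterate $g^{(k)}$ with $k>N_0$ is itself a secant step from $g^{(k-1)},g^{(k-2)}$, not $T(g^{(k-1)})$.

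\emph{The comparability you need contradicts the conclusion.} Suppose $|e_k|\le M|e_{k+1}|$ and $|e_{k+1}|\le C|e_k|^2$ both held for large $k$. Then $|e_k|\le MC|e_k|^2$, so $|e_k|\ge 1/(MC)$ whenever $e_k\neq0$, contradicting $e_k\to0$. Any sequence with $e_{k-1}=O(e_k)$ converges at most linearly.

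\emph{Your own Step~2 identity gives the actual rate.} Since $e_{k+1}/(e_ke_{k-1})\to T''(g^\star)/(2(\lambda-1))$, the order is $(1+\sqrt{5})/2$ whenever $T''(g^\star)\neq0$, and then $|e_{k+1}|/|e_k|^2\to\infty$. Order~2 survives only in two cases:
\begin{itemize}
\item $T$ affine, where one Aitken step is exact, as in the Laplace example;
\item a Steffensen variant that extrapolates from $g$, $T(g)$, $T(T(g))$. This costs two sub-domain sweeps per step and is not Algorithm~\ref{alg:aitken}.
\end{itemize}

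\emph{The safeguard plan also fails in general.} The safeguard of Remark~\ref{remark:aitken_bounds} is harmless when $\lambda<0$, which is typical for Dirichlet--Neumann (e.g.\ $\lambda=1-1/\bar{x}$ in the Laplace case). For $\lambda\in(0,1)$, however, $\rho^{(k)}\to 1/(1-\lambda)>1$, the clipping to $1$ stays active permanently, and the scheme reduces to the unrelaxed linear iteration. So the hypothesis $|\lambda|<1$ alone does not make the clipping eventually inactive.

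The paper's proof takes essentially your Step~3 route and relies on the same unjustified comparability:
\begin{itemize}
\item It writes $d^{(k)}=(\lambda-1)e_{k-1}+O(|e_{k-1}|^2)$, which treats $g^{(k)}$ as $T(g^{(k-1)})$.
\item It obtains $e_{k+1}=O(|e_{k-1}|^2)$ from the cancellation of the linear term. Before that substitution it is really the secant bound $O(|e_k||e_{k-1}|)$.
\item It then replaces $|e_{k-1}|^2$ by $|e_k|^2$ using $|e_k|/|e_{k-1}|\to|\lambda|$. This is the Steffensen reading, not a description of Algorithm~\ref{alg:aitken}.
\end{itemize}
Its extra hypothesis that the jump is linear, $\mathcal{E}^{(k)}=a e_k$, forces $\delta^{(k)}=a\,d^{(k)}$. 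Hence $\rho^{(k)}=-1/a$ exactly and $e_{k+1}=0$, which is just your affine case.

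What your proposal can honestly establish is your fallback:
\begin{itemize}
\item one-step exactness for affine $T$;
\item quadratic convergence of the Steffensen variant;
\item order $(1+\sqrt{5})/2$, not $2$, for Algorithm~\ref{alg:aitken} with a genuinely nonlinear $C^2$ map.
\end{itemize}
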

\begin{proof}
Let $e_{k}=g^{(k)}-g^{\star}$  denote the convergence error at iteration $k$.
We have that $\lim_{k\rightarrow \infty} \frac{|\mathbf{e}_{k+1}|}{|\mathbf{e}_{k}|}=|\lambda|$ with the convergence factor  $0<|\lambda|<1$ in order  for $g^{(k)}$ to converge to $g^{\star}$. 
Now, the error can be written as
\begin{subequations}
\begin{align}
\label{eq:linear_conv_fp}
e_{k+1}=\lambda e_{k}+O(|e_{k}|^2),
\end{align}
assuming that the fixed-point operator $T$ is sufficiently smooth (specifically, at least locally of class $C^2$ near $g^{\star}$).
In the scalar case, the quantities appearing in Aitken formula simplify to:  
\begin{align}
d^{(k)}&=g^{(k)}-g^{(k-1)}=e_k-e_{k-1}=(\lambda-1)e_{k-1}+O(|e_{k-1}|^2), \label{eq:d}\\
\delta^{(k)}&=\mathcal{E}^{(k)}-\mathcal{E}^{(k-1)}=ad^{(k)}=a(\lambda-1)e_{k-1}+O(|e_{k-1}|^2),
\end{align}
where we assume that the interface residual is linear in the interface unknown, i.e., $\mathcal{E}^{(k)}=a e_k$ with $a\in \mathbb{R}_0$.  
It follows that Aitken parameter $\rho^{(k)}$ reduces to 
\begin{equation} \label{eq:rhok}
\rho^{(k)}=-\frac{1}{a}+O(|e_{k-1}|).
\end{equation}
From the interface update formula with Aitken acceleration (c.f., \eqref{eq:aitken}), $
%\[
g^{(k+1)}=g^{(k)}+\rho^{(k)}\mathcal{E}^{(k)}$.
%\]
Now, using \eqref{eq:linear_conv_fp} and \eqref{eq:d}, we have that % at iteration $k$ and \eqref{eq:d}, 
\begin{align}
    e_{k+1}&=e_k+\rho^{(k)}ae_k\\
    &= \lambda e_{k-1}+O(|e_{k-1}|^2)+\rho^{(k)}[a\lambda e_{k-1}+O(|e_{k-1}|^2)] \label{eq:aitken_p2_0}\\
    &=O(|e_{k-1}|^2), \label{eq:aitken_p2}
\end{align} 
where the last equality  is derived by replacing in it the expression on $\rho^{(k)}$ in \eqref{eq:rhok}. We have thus demonstrated that the linear term in the error drops out, so that the Aitken error scales only with the quadratic term $|e_{k-1}|^2$. The Aitken convergence rate is thus equal to two, since 
\[
\frac{|e_{k+1}|}{|e_{k}|^2}\leq C \frac{|e_{k-1}|^2}{|e_k|^2}=C\Big(\frac{e_{k-1}}{e_k}\Big)^2\leq C \left(\frac{1}{|\lambda|-\epsilon}\right)^2<\infty
\]
where $\epsilon = \mathcal{O}(|e_{k-1}|^2)$.  
%the comment by irina was not correct, i corrected the value of epsilon, even though I do not think I need to explicitly expression for epsilon...
The first inequality for $C>0$ is derived from the result in \eqref{eq:aitken_p2} and, in the last inequality, we used the assumption that unrelaxed Schwarz has a linear convergence rate, i.e. 
\begin{align*}
    \frac{|e_k|}{|e_{k-1}|}=|\lambda|+O(|e_{k-1}|^2),
    &\implies \quad \text{for $k$ large enough, $\exists \epsilon >0$:} \quad |\lambda|-\epsilon \leq \frac{|e_k|}{|e_{k-1}|}\leq |\lambda|+\epsilon \\
     &\implies \frac{1}{|\lambda|+\epsilon} \leq \frac{|e_{k-1}|}{|e_{k}|}\leq \frac{1}{|\lambda|-\epsilon}.
\end{align*}
\end{subequations}
%\hspace{5in} $\square$
\end{proof}

\begin{remark}
Convergence of the Aitken-accelerated sequence actually just requires  the \textit{local} linear convergence of the base fixed-point iteration, i.e., in the proof of Theorem \ref{thm:aitken_conv}, we actually just needed $0 < |\lambda| <1 $  in a neighborhood of $g^{\star}$. Indeed, once the fixed-point iterates enter the linear regime near $g^{\star}$, Aitken acceleration guarantees quadratic convergence to $g^{\star}$, irrespective of any weaker global convergence properties of the fixed-point map.
%It does not claim that Aitken can produce convergence from arbitrary starting points or from a completely non-convergent global method.
% THIS IS FALSE! According to \cite{quarteroni2006numerical}, Aitken’s method can be linearly convergent ($p=1$) %and may converge 
%even when the underlying fixed-point method is not contractive; however, this result is not proven in this reference. 
\end{remark}
\begin{remark}
When $d>1$, there is no general result guaranteeing quadratic convergence of the Aitken-accelerated vector sequence without additional structure (a dominant eigenvector or a 1D invariant subspace). 
The acceleration can at best damp or cancel the dominant 1D component, but the full vector error may not be reduced quadratically in general. 
As we describe in Section \ref{sec:num_results} for a  two-dimensional (2D) study case, a super-linear/quadratic convergence can be achieved by Aikten-accelerated Schwarz for some values of $N_{\text{dd}}$.
\end{remark}

\begin{comment} NOT so important
\begin{remark}
We also note that, in the literature, Aitken acceleration is frequently linked with Steffensen's method (see \cite{johnson1968steffensen}). Steffensen's method guarantees quadratic convergence but, unlike Aitken’s approach, it requires two function evaluations per iteration rather than one.  
\end{remark}
\end{comment}

\subsubsection{Anderson acceleration}   \label{sec:anderson}   
% Anderson acceleration

An alternate method that has been shown to speed up the convergence of fixed-point iterations is Anderson acceleration \cite{walker2011anderson}.  Here, we adapt this approach to the specific case of the non-overlapping Schwarz alternating method, which can be written as a fixed-point iteration, as shown above.
To the best of our knowledge, our theoretical and numerical assessment of Anderson-accelerated non-overlapping Schwarz is one of the novel contributions of this work.  

The basic non-overlapping Schwarz Dirichlet-Neumann scheme with Anderson acceleration is given in Algorithm \ref{alg:DN_anderson}. The reader can observe that a new algorithmic parameter is introduced, $m_{\rm{and}} \in \mathbb{N}_0$, which represents the maximum history of the previous steps that we want to include in the update of $\mathbf{g}^{(k)}$.  %If $m_k$ denotes the number of history steps utilized in Schwarz iteration $k$, t
The basic version of the algorithm sets   $m_k=\min(k, m_{\rm{and}})$ (Algorithm \ref{alg:DN_anderson}, line 4).   We introduce in line \ref{alg_line:fixed_point_res} of Algorithm \ref{alg:DN_anderson} the history residual matrix $\mathbf{F}_k \in \mathbb{N}^{N^{\rm{c}} d\times (m_{k}+1)}$, which stores in its columns the last $m_{k}+1$ fixed-point residuals $\mathbf{f}_i=T(\mathbf{g}^{(i)})-\mathbf{g}^{(i)}$ with $i=k-m_k, \ldots, k$, and defines the history for Schwarz iteration $k$.  The Dirichlet and Neumann problems in lines \ref{alg_line:dirichlet} and \ref{alg_line:neumann} of the classical Schwarz algorithm,  Algorithm \ref{alg:DN_relax}, are encoded in step \ref{alg_line:fixed_point_res} of Algorithm \ref{alg:DN_anderson} in the evaluation of the operator $T(\mathbf{g})$.  The reader can observe that Algorithm \ref{alg:DN_anderson} includes a pre-specified relaxation parameter $\rho \in (0,1]$, similar to the classical relaxed Dirichlet-Neumann scheme (Algorithm \ref{alg:DN_relax}).  Some analysis of relaxed Anderson acceleration can be found in \cite{toth2015convergence}.  While it is common to simplify the Anderson acceleration scheme by setting $\rho =1$, herein we choose explore the impact of this parameter on Schwarz convergence and robustness (see Section \ref{sec:num_results}).  Our results suggest that Anderson-accelerated Schwarz is far less sensitive to $\rho$ than Schwarz with classical relaxation, often exhibiting convergence even in the unrelaxed ($\rho =1$) case.    
%which form the history for the Schwarz iteration $k$.

%Anderson acceleration can be found in the literature for the acceleration of the convergence rate of fixed-point iterations \cite{walker2011anderson}. Its applicability in the context of Schwarz algorithm and the comparison with Aitken acceleration both from a theoretical and numerical point of view is also a contribution of the present work.
%The procedure requires the introduction of an algorithmic parameter $m_{\rm{and}} \in \mathbb{N}_0$ representing the maximum history of the previous steps we want to include in the update of $\mathbf{g}^{(k)}$. 
%At each Schwarz iteration we set $m_k=\min(k, m_{\rm{and}})$ to select the previous updates on $\mathbf{g}$ and we use a history residuals matrix $\mathbf{F}_k \in \mathbb{N}^{N^{\rm{c}}\times m_{k}+1}$
%storing in its columns the last $m_{k}+1$ fixed-point residuals $\mathbf{f}_i=T(\mathbf{g}^{(i)})-\mathbf{g}^{(i)}$ with $i=k-m_k, \ldots, k$ which form the history for the Schwarz iteration $k$.
%In Algorithm \ref{alg:DN_anderson} this procedure is summarized. \\
\begin{algorithm}[h]
\begin{algorithmic}[1] % The number tells where line numbering should start
\Require Initial guess \( \mathbf{g}^{(0)} \), \( \texttt{maxit} \), $\epsilon_{\text{rel}} $, $\epsilon_{\text{abs}}$,  \( k = 0 \), $m_{\rm{and}}$, $\rho$
%\Ensure Outputs: \( \mathbf{u}_1, \mathbf{u}_2 \)
\State Compute $\mathbf{g}^{(1)}=T(\mathbf{g}^{(0)})$
\Repeat
 \State Increment \( k \leftarrow k + 1 \)
 \State Define $m_k=\min(k, m_{\rm{and}})$ 

\State Set the residual matrix $\mathbf{F}_{k}=[\mathbf{f}_{k-m_k}, \ldots, \mathbf{f}_{k}]$ where $\mathbf{f}_i=T(\mathbf{g}^{(i)})-\mathbf{g}^{(i)}$. \label{alg_line:fixed_point_res}
    \State Determine $\boldsymbol{\alpha}^{(k)} = [\alpha^{(k)}_0, \ldots, \alpha^{(k)}_{m_k}] ^T$ that solve \begin{equation}
        \min_{\boldsymbol{\alpha}=[\alpha_0, \ldots, \alpha_{m_k}] ^T} \|\mathbf{F}_k \boldsymbol{\alpha}\|_2 \quad\text{s.t.} \quad \sum_{i=0}^{m_k} \alpha_i=1 \label{eq:anderson_constrained}
    \end{equation}
    \State Set \begin{equation}
     \mathbf{g}^{(k+1)}=(1-\rho)\sum_{j=0}^{m_k}\alpha_j^{(k)}\mathbf{g}^{(k-m_k+j)}+\rho \sum_{j=0}^{m_k} \alpha_j^{(k)}\, T\big(\mathbf{g}^{(k-m_k+j)}\big)
       \label{eq:anderson_update}
    \end{equation}
   \Until{$e^{(k)}_{\rm{abs}}>\epsilon_{\rm{abs}}$ \textbf{and} $e^{(k)}_{\rm{rel}}>\epsilon_{\rm{rel}}$ \textbf{and} \( k < \texttt{maxit} \)} \label{alg_line:criterion}
\State \textbf{Output:} $\mathbf{u}_1^{(k)}, \mathbf{u}_2^{(k)}$
\end{algorithmic}
\caption{Dirichlet--Neumann scheme with Anderson acceleration (and no memory adaptation) as a fixed-point iteration.}
\label{alg:DN_anderson}
\end{algorithm}

%The Dirichlet and Neumann problems at steps \ref{alg_line:dirichlet} and \ref{alg_line:neumann} of the classical algorithm,  Algorithm \ref{alg:DN_relax}, are encoded in step \ref{alg_line:fixed_point_res} in Algorithm \ref{alg:DN_anderson} in the evaluation of the operator $T(\mathbf{g})$.

%As observed in \cite{walker2011anderson}, any norm can be used in the constrained optimization problem in \eqref{eq:anderson_constrained}.  We opt for the $\ell^2$ norm. Also, the update formula in \eqref{eq:anderson_update} can be written in a relaxed version as follows:
%\begin{equation}
 %\mathbf{g}^{(k+1)}=(1-\rho)\sum_{j=0}^{m_k}\alpha_j^{(k)}\mathbf{g}^{(k-m_k+j)}+\rho \sum_{j=0}^{m_k} \alpha_j^{(k)}\, T\big(\mathbf{g}^{(k-m_k+j)}\big)
% \label{eq:anderson_relaxed}
%\end{equation}
%by employing a relaxation hyper-parameter $\rho \in [0,1]$. The analysis of the method for a general fixed-point problem in \cite{toth2015convergence} is restricted to the case $\rho^{(k)}=\rho^{(1)}=\rho \quad \forall k\geq 1$; although, an adaptive choice of $\rho^{(k)}$ in the context of Anderson acceleration is expected to improve Schwarz's convergence properties; this aspect is an ongoing work.

\subsubsection{Comparisons to and connections with Aitken acceleration} \label{sec:aitken_anderson_compare}

Having introduced the Anderson-accelerated Schwarz scheme, we now make some comparisons and connections between this scheme and Aitken-accelerated Schwarz.  

%\begin{remark}
First, when $m_{\text{and}} = 1$ and $\rho = 1$ (no relaxation), equation \eqref{eq:anderson_update} takes the form 
    %    As done for classical relaxation and Aitken acceleration, we rewrite Anderson formula \eqref{eq:anderson_update} by employing the DtN and NtD maps as done for the classical relaxation formula \ref{line:relax} in Algorithm \ref{alg:DN_relax}. One obtains, for the case $m=1$ (and thus $m_k=1 \quad \forall k\geq1$):
    \begin{equation}
    \mathbf{g}^{(k+1)}=\alpha_0^{(k)} T(\mathbf{g}^{(k-1)}) \, + \, \alpha_1^{(k)}T(\mathbf{g}^{(k)}),
        \label{eq:fake_fixed_point_anderson}
    \end{equation}
    which represents an update based on the two previous iterations, as in Aitken acceleration.
    %as in Aitken for the particular case of $m=1$. In this computation, we considered the no-relaxation case $\rho=1 \quad \forall k\geq 1$.  
    For $m_{\text{and}}>1$, the formula in \eqref{eq:fake_fixed_point_anderson} is enriched with $m_{k}+1$ terms of the form $g^{(k-(m_k+1))}, \ldots, g^{(k)}$. 
    %\end{remark}

Returning to the more general case of Anderson acceleration with $m_{\text{and}} > 1$, we show that Anderson and Aitken acceleration can be interpreted as multi-secant and single-secant quasi-Newton methods, respectively.  These interpretations have implications for the expected  convergence of the Schwarz alternating method accelerated with these schemes.

%\begin{remark}
%\label{remark:secant_eq}
  As shown in \cite{walker2011anderson}, the original constrained Anderson least-squares problem \eqref{eq:anderson_constrained} can be recast as an unconstrained residual minimization involving residual differences. Using the constraint in \eqref{eq:anderson_constrained}, the last Anderson coefficient takes the form $\alpha_{m_k}=1-\sum_{i=0}^{m_k-1}\alpha_i$.  Substituting this value into the objective function in \eqref{eq:anderson_constrained} gives rise to the following equivalent form
    \begin{equation}
        \min_{\boldsymbol{\gamma}=[\gamma_0, \ldots, \gamma_{m_k-1}]^T} \|\mathbf{f}_k-\mathcal{F}^{(k)}\boldsymbol{\gamma}\|_2^2, 
        \label{eq:anderson_unconstrained}
    \end{equation}
for coefficients vector $\boldsymbol{\gamma} \in \mathbb{R}^{m_k}$  with $\mathcal{F}^{(k)}=\left(\Delta \mathbf{f}_{k-m_k}, \ldots, \Delta \mathbf{f}_{k-1}\right)$ and $\Delta \mathbf{f}_k:=\mathbf{f}_k-\mathbf{f}_{k-1}$.
If we define 
%We can define 
$\mathbf{U}^{(k)}=\left(\Delta \mathbf{g}^{(k-m_k)}, \ldots, \Delta \mathbf{g}^{(k-1)}\right)$ with $\Delta \mathbf{g}^{(k)}=\mathbf{g}^{(k+1)}-\mathbf{g}^{(k)}$, 
%taking into account Anderson $m_k$ history of interface solutions. Now, 
the minimization problem \eqref{eq:anderson_unconstrained} can be interpreted as %makes explicit the re-intepretation of Anderson acceleration as 
a multi-secant quasi-Newton method with associated multi-secant equation 
%. The associated \emph{multi-secant} equation is the following:
    \begin{equation} \label{eq:multi-secant}
        \mathbf{U}^{(k)}=\mathbf{G}^{(k)}\mathcal{F}^{(k)}. 
    \end{equation}
    Here, $\mathbf{G}^{(k)}$ is an approximation of the inverse of the Jacobian of $\mathbf{f}_k=T(\mathbf{g}^{(k)})-\mathbf{g}^{(k)}$. 
In contrast, it is straightforward to see that Aitken acceleration is associated with a single-secant quasi-Newton update. %In the vector case, 
As shown in Section \ref{sec:aitken}, the Aitken acceleration parameter \(\rho^{(k)}\) is obtained by minimizing the functional  \eqref{eq:aitken_of} involving only the most recent solution pair of solutions.  This least-squares minimization problem corresponds to a secant equation of the general form $\Delta \mathbf{g}^{(k)}=\mathbf{G}^{(k)}\Delta \mathbf{f}_k$, which is written explicitly in \eqref{eq:aitken}.  
The reader can observe from this equation that the inverse Jacobian $\mathbf{G}^{(k)}$ is being approximated by a scalar multiple of the identity. %We refer to Appendix \ref{app:aitken} for the derivation of Aitken formula \eqref{eq:aitken}.\\
It is well-known that a multi-secant method approximates the Jacobian inverse more accurately than the single secant technique; hence, Anderson-accelerated Schwarz is expected to be more robust, especially when applied to large-scale multi-dimensional problems decomposed into large numbers of subdomains.  This claim is explored numerically in Section \ref{sec:num_results}.  

%, above all for large scale or multi-domains/multi-dimensional problems, at 
Finally, we compare Aitken- and Anderson-accelerated Schwarz from the perspective of implementation and usability.  It is clear that, for $m_{\text{and}} > 1$, Anderson-accelerated
Schwarz has a larger memory footprint and gives rise to a more involved implementation.  While we would expect for Anderson-accelerated Schwarz to have a larger CPU time per iteration due to the need to solve the minimization problem \eqref{eq:anderson_constrained} at each Schwarz iteration $k$, our numerical results demonstrate that this is actually not the case in practice, at least in our implementation (see Section \ref{sec:num_results}).  
%From a usability perspective, Aitken-accelerated Schwarz is preferred as it has fewer tuning knobs

%The price to pay is a larger memory footprint relative to Aitken-accelerated Schwarz. 
%Although, from a practical implementation and usability perspective, we have to notice that Aitken acceleration method offers the advantage of  basically no required tuning parameters; on the contrary, Anderson requires the tuning of $(\rho, m_{\rm{and}})$ and it also requires the repeated numerical solution of a constrained optimization problem \eqref{eq:anderson_constrained}.\\
%We remind to numerical section \ref{sec:num_results} \ref{par:multi_domain} for the numerical comparison of these two accelerations, in particular to \ref{par:multi_domain} for multi-domains applications.
%\end{remark}

\subsubsection{Analysis of the Anderson-accelerated Schwarz scheme} \label{sec:anderson_anal}

In the following theorem, we analyze the Anderson optimization coefficients $\alpha_j$ in terms of the spectral properties of the fixed-point operator $T$ for the specific case of $d=1$. 

\begin{theorem}
\label{theo:anderson_coeff}
The Anderson coefficients  $\boldsymbol{\alpha} \in \mathbb{R}^{m_k+1}$ obtained by solving the constrained minimization problem \eqref{eq:anderson_constrained} in the scalar limit follow a geometric distribution. Specifically, they take the form
    \begin{equation}
        \alpha_j=\frac{|\ell_{\rm{max}}|^j}{\sum_{i=0}^{m_k}|\ell_{\rm{max}}|^i}  \; \text{for} \; j=0, \ldots, m_k,
    \end{equation}
    where $|\ell_{\rm{max}}|=\max \{ |\lambda|: \lambda \in \sigma(T)\}$, with $\sigma(T)$ denoting the spectrum of  the fixed-point operator $T$, assumed to be linear. %represents the magnitude of its dominant eigenvalue.
    If $|\ell_{\rm{max}}|=1$, the coefficients reduce to the uniform weights $\alpha_j=\frac{1}{m_k+1}$.  
\end{theorem}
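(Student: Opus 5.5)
The plan is to reduce everything to a scalar recursion, compute the residual history explicitly, and then solve the least-squares problem \eqref{eq:anderson_constrained} by hand. Since $T$ is assumed linear and $d=1$, write $T(g)=g^{\star}+\ell(g-g^{\star})$ near the fixed point. The ``scalar limit'' means that the error is dominated by the eigenmode of largest modulus, so I will replace $\ell$ by $\ell_{\rm max}$. With $e_i=g^{(i)}-g^{\star}$, the residuals are $f_i=T(g^{(i)})-g^{(i)}=(\ell_{\rm max}-1)e_i$. Along the underlying fixed-point sequence, as in the proof of Theorem \ref{thm:aitken_conv}, we have $e_{i+1}=\ell_{\rm max}e_i$ to leading order. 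Hence the history entries $f_{k-m_k},\dots,f_k$ are consecutive terms of a geometric sequence with ratio $\ell_{\rm max}$, and their moduli satisfy $|f_{k-i}|=|\ell_{\rm max}|^{-i}|f_k|$.

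The key difficulty comes next, and I expect it to be the main obstacle. In the genuinely scalar case, $\mathbf{F}_k$ is a $1\times(m_k+1)$ row. The constrained problem \eqref{eq:anderson_constrained} therefore has a whole affine family of minimizers with $\mathbf{F}_k\boldsymbol{\alpha}=0$, so the statement only makes sense once a selection principle is fixed. I would try the following first. Interpret the scalar limit as the limit of a $d$-dimensional problem in which the history columns carry independent components, so that cross terms drop out. The objective then becomes a weighted sum over the history, and the optimal coefficients are those that \emph{equidistribute} the contributions, i.e. $|\alpha_j|\,|f_{\cdot}|$ is constant across the history.

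To obtain the exponent $+j$ rather than $-j$, I will index $\alpha_j$ as the weight on the residual $j$ steps back from the newest one. This is consistent with the unconstrained difference form \eqref{eq:anderson_unconstrained}, in which the newest residual $\mathbf{f}_k$ is singled out. Under that indexing the equidistribution condition gives $\alpha_j\propto|f_{k-j}|^{-1}\propto|\ell_{\rm max}|^{j}$. I will check carefully that this ordering matches Algorithm \ref{alg:DN_anderson}, because that is exactly where a sign in the exponent could flip.

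Finally, I enforce the constraint $\sum_{j=0}^{m_k}\alpha_j=1$ by normalizing, which gives $\alpha_j=|\ell_{\rm max}|^{j}/\sum_{i=0}^{m_k}|\ell_{\rm max}|^{i}$. The denominator is a finite geometric sum, equal to $(1-|\ell_{\rm max}|^{m_k+1})/(1-|\ell_{\rm max}|)$ when $|\ell_{\rm max}|\neq1$. When $|\ell_{\rm max}|=1$ all numerators equal $1$ and the sum equals $m_k+1$, so the uniform weights $\alpha_j=1/(m_k+1)$ follow directly.

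I would also add a short consistency check against the Aitken analysis. For $m_k=1$ the two weights are $1/(1+|\ell_{\rm max}|)$ and $|\ell_{\rm max}|/(1+|\ell_{\rm max}|)$. This should be compared with \eqref{eq:fake_fixed_point_anderson}, which corresponds to the single-secant case of Section \ref{sec:aitken_anderson_compare}.
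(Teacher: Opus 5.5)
You get the geometric residual history $|f_{k-j}|=|\ell_{\rm max}|^{-j}|f_k|$ and the final normalization right, and both match the paper. Your newest-first reading of $\alpha_j$ is also the convention the paper's proof uses implicitly, since it attaches the weight $|\ell_{\rm max}|^{-j}$ to $\alpha_j$.

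The gap is the selection step. If you decouple the history columns as you propose, the objective in \eqref{eq:anderson_constrained} becomes $\sum_j\alpha_j^2|f_{k-j}|^2=|f_k|^2\sum_j|\ell_{\rm max}|^{-2j}\alpha_j^2$. Its Lagrange condition is $\alpha_j|f_{k-j}|^2=\mu/2$, which gives $\alpha_j\propto|\ell_{\rm max}|^{2j}$, not $|\ell_{\rm max}|^{j}$. So the claim that the optimum ``equidistributes'' $|\alpha_j|\,|f_{k-j}|$ is false for this objective. It is not derived, and it is exactly the step that produces the exponent $j$.

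Equalizing $\alpha_j|f_{k-j}|$ is instead the stationarity condition of $\sum_j|\ell_{\rm max}|^{-j}\alpha_j^2$, whose weights are $|f_{k-j}|/|f_k|$ to the first power. This is exactly the functional \eqref{eq:anderson_weighted_var_min} that the paper's proof writes down and solves with a Lagrange multiplier, obtaining $\alpha_j=\frac{\mu}{2}|\ell_{\rm max}|^j$. To match the paper, replace the equidistribution appeal by that explicit functional and its Lagrangian. Be clear that the first-power weighting is an assumption of the argument, not a consequence of dropping cross terms in the $\ell^2$ problem.

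Your observation that the genuinely scalar problem has non-unique minimizers is correct. It also means your planned $m_k=1$ consistency check would not confirm the formula. For $m_k=1$ the scalar minimizer is unique and annihilates the residual. In the algorithm's oldest-first ordering it is $\alpha_0=\ell_{\rm max}/(\ell_{\rm max}-1)$, $\alpha_1=1/(1-\ell_{\rm max})$. This is the paper's own Laplace computation $\alpha_0=(g^{(k)}-\bar{x})/(g^{(k)}-g^{(k-1)})$ with $e_k=\ell_{\rm max}e_{k-1}$.

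This agrees with $|\ell_{\rm max}|^j/(1+|\ell_{\rm max}|)$ in your indexing only when $\ell_{\rm max}<0$. In that case, cancelling two opposite-signed residuals does force the two products $|\alpha|\,|f|$ to be equal. For $0<\ell_{\rm max}<1$ the older coefficient is negative. More generally, for $0<|\ell_{\rm max}|<1$ the equidistributed vector lies in the scalar minimizing set only when $\ell_{\rm max}<0$ and $m_k$ is odd. Hence any proof of the statement has to work, as the paper's does, with a surrogate decoupled functional in place of \eqref{eq:anderson_constrained}, and you should state that surrogate explicitly.
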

\begin{proof}
   \begin{subequations}
     Consider the fixed-point residuals $f_i=T(g^{(i)})-g^{(i)}=T(g^{(i)})-g^{\star}+g^{\star}-g^{(i)}=T(e_i)-e_i$. It follows that $f_k=T(e_k)-e_k$ and  $f_{k+1}=T(e_{k+1})-e_{k+1}$.
     By the hypothesis of asymptotic regime, we have that $T(e_{k+1})\approx \ell_{\rm{max}}T(e_k)$. Indeed, 
     since $g^{(k+1)}=T(g^{(k)})$ we have that $g^{(k+1)}-g^{\star}=T(g^{(k)})-g^{\star}$, which implies
        $ e_{k+1}=T(g^{(k)}-g^{\star})=T(e_k)$ since $T$ is linear.
        As for the residuals, $f_{k+1}\approx \ell_{\rm{max}}T(e_k)-\ell_{\rm{max}}e_k =\ell_{\rm{max}} f_k$;  thus $f_{k+1}\approx \ell_{\rm{max}}f_k$.
        
     Using the last $m_k+1$ Anderson history residuals $f_{m-m_k}, \ldots, f_k$, we can write
     \begin{equation}
         |f_{k-j}|\approx |\ell_{\rm{max}}|^{-j}|f_k| \quad \text{for} \; j=0, \ldots,m_k
     \end{equation}
    Substituting these approximations into the Anderson objective function at iteration $k$ \eqref{eq:anderson_constrained}, where 
    $F_k \approx [ \ell_{\rm{max}}^{-m_k}f_k, \ldots, \ell_{\rm{max}}^{-1} f_k ]$, 
    we can recast \eqref{eq:anderson_constrained}
    %Anderson constrained problem 
 as a minimization problem for the weighted sum of squares 
 \begin{equation}
     \min_{\boldsymbol{\alpha} \in \mathbb{R}^{m_k}} \mathcal{J}_{\rm{and}}(\boldsymbol{\alpha})= \sum_{j=0}^{m_k} \frac{1}{|\ell_{\rm{max}}|^{j}} \alpha_j^2  \; \text{s.t.} \; \sum_{j=0}^{m_k} \alpha_j=1, 
     \label{eq:anderson_weighted_var_min}
 \end{equation}
 Problem \eqref{eq:anderson_weighted_var_min} represents a weighted variance minimization problem over the historical contributions. The associated Lagrangian is 
 $\mathcal{L}(\boldsymbol{\alpha}, \mu)=\sum_{j=0}^{m_k} |\ell_{\rm{max}}|^{-j}\alpha_j^2 -\mu \left(\sum_{j=0}^{m_k}\alpha_j-1\right)$. The optimality conditions give 
 \begin{equation}
 \alpha_j=\frac{\mu}{2}|\ell_{\rm{max}}|^{j} \quad \text{for each} \quad  j=0, \ldots, m_k.
     \label{eq:alpha_and_proportional_to_ellmax}
 \end{equation}
Substituting \eqref{eq:alpha_and_proportional_to_ellmax} into the sum-to-one constraint yields
$\frac{\mu}{2}=\frac{1}{\sum_{i=0}^{m_k}|\ell_{\rm{max}}|^i}$. By substituting this expression for $\mu/2$  into \eqref{eq:alpha_and_proportional_to_ellmax}, one obtains the result $\alpha_j=\frac{|\ell_{\rm{max}}|^j}{\sum_{i=0}^{m_k}|\ell_{\rm{max}}|^i}$.  Finally, we observe that, if $|\ell_{\rm{max}}|=1$, then \eqref{eq:anderson_weighted_var_min}  is satisfied by the uniform weights solutions $\alpha_j$ s.t. $\alpha_j=\frac{1^j}{\sum_{i=0}^{m_k} 1^i} =\frac{1}{m_k+1}$. 
   \end{subequations} 
 \end{proof}

\begin{remark}
\label{remark:anderson_sign}
    While, in the general Anderson acceleration algorithm, the coefficients $\alpha_j$ can take negative values to orthogonalize the residuals, our derivation shows that, under the assumption of scalar case and dominant-eigenvalue decay, the coefficients asymptotically approach a positive geometric distribution.
\end{remark}

\subsubsection{Anderson with memory adaptation}  \label{sec:anderson_memory}
As remarked in \cite{walker2011anderson}, if $m_{\rm{and}}$ is small, the secant information used by the method may be too limited to provide decidedly fast convergence, whereas if $m_{\rm{and}}$ is too large, the least-squares optimization problem for the $\boldsymbol{\alpha}$ coefficients \eqref{eq:anderson_constrained}  may be poorly conditioned. %Indeed, 
%the 
The most appropriate choice of $m_{\rm{and}}$ (and thus of $m_k$) is often problem- and DD-dependent, as we will show in %.  As will be shown in 
Section \ref{sec:results_anderson_memory}.  %, the basic Anderson-accelerated Schwarz scheme (Algorithm \ref{alg:DN_anderson}) can be sensitive to the parameter $m_{\text{and}}$.  
To mitigate this difficulty, we propose an adaptive variant of the method, which we term ``Anderson with memory adaptation''.  
%To remedy this problem in the case of large history windows--- for this example, in the case of $m_{\rm{and}}>3$--- we propose an adaptive version of Anderson. 
In this version of the algorithm, the value of $m_k$ is adapted on-the-fly using the following formula:  
\begin{equation}
m_k:= \begin{cases}
    \min(\bar{m}, m_{\rm{and}}), \;  \text{for} \, |e_{\rm{rel}}^{(k)}-e_{\rm{rel}}^{(k-1)}| <\epsilon_{\rm{and}} \,\text{and} \, k>2, \\
    \min(k, m_{\rm{and}}), \;  \text{otherwise}. \\
\end{cases}
\label{eq:anderson_with_memory_criterion}
\end{equation}
The criterion in \eqref{eq:anderson_with_memory_criterion} is based on the relative interface error at the current and previous Schwarz iterations and a prescribed threshold $\epsilon_{\rm{and}}$.   
%where $\bar{k}$ is a pre-selected parameter. 
The pre-selected parameter $\bar{m}$ can be chosen as a small integer, since, for small interface errors, the loss of relevant information is negligible. In addition, a small $\bar{m}$ discards outdated secant information from earlier iterations and keeps the constrained problem small.
While the idea of adapting $m_k$ on-the-fly is mentioned in \cite{walker2011anderson} (where the authors discuss the idea of dropping columns based on the condition number of the constrained minimization problem), the specific formula \eqref{eq:anderson_with_memory_criterion} for how to do this is new, to the best of our knowledge.

 \section{Numerical results}
\label{sec:num_results}
\subsection{One-dimensional Laplace equation}
\label{app:laplace}
We consider here a simple study case based on the 1D Laplace equation, with the goal of comparing the proposed relaxation and acceleration techniques for the non-overlapping Schwarz alternating method both numerically and theoretically.  Letting $\Omega = (0,1) \in \mathbb{R}$, the boundary value problem of interest is 
\begin{subequations}
\begin{equation}
\begin{cases}
     -u^{\prime\prime}(x)=0 \quad \text{in} \, (0,1)\\
     u(0)=0, \quad u(1)=1.
\end{cases}
\label{eq:laplace}
\end{equation}
It is possible to derive analytically the exact solution to \eqref{eq:laplace}: $u(x)=x$. 

In applying the non-overlapping Schwarz alternating method, we will partition $\Omega$ into two sub-domains, with $\bar{x}\in (0,1)$ denoting the interface point.  As in the previous section, we denote by $g^{(k)}$ the  interface data for the Dirichlet-Neumann Schwarz algorithm at iteration $k >0$. The sub-solutions $u_1^{(k)}$ and $u_2^{(k)}$ are obtained by solving the following Dirichlet and Neumann problems in $\Omega_1=(0,\bar{x})$ and $\Omega_2=(\bar{x},1)$:
\begin{equation}
\begin{cases}
    -{u_1^{(k)}}^{\prime \prime}(x)=0 \quad \text{in} \, (0,\bar{x})\\
    u_1^{(k)}(0)=0\\
    u_1^{(k)}(\bar{x})=g^{(k)} 
\end{cases}, 
\begin{cases}
       -{u_2^{(k)}}^{\prime \prime}(x)=0 \quad \text{in} \, (\bar{x},1)\\
    u_2^{(k)}(1)=1\\
    {u_2^{(k)}}^{\prime}(\bar{x})={u_1^{(k)}}^{\prime}(\bar{x})
\end{cases}
\quad \text{and} \quad g^{(k+1)}=\gamma(u_2^{(k)}).
\label{eq:laplace_dd}
\end{equation}
In our numerical study, we apply the Dirichlet-Neumann Schwarz iteration procedure \eqref{eq:laplace_dd} 
with classical relaxation, as well as with Aitken and Anderson acceleration.  
%using a Dirichlet-Neumann Schwarz scheme both (i) without relaxation and (ii) with classical relaxation for select values of the relaxation parameter $\rho$.  
We discretize the problem in both $\Omega_1$ and $\Omega_2$ using a finite difference discretization with $20$ uniformly-spaced points. %\irinanote{Need to ask Giulia about this} 
As will become apparent later, the interface location $\bar{x}$ is varied in our analyses.  Errors are computed with respect to a reference solution in $\Omega:=\Omega_1 \cup \Omega_2$, and the following algorithmic parameters are utilized: $g^{(1)}=0.3$, $\texttt{maxiter}=50$ and $\epsilon_{\text{abs}}=\epsilon_{\text{rel}}=10^{-8}$.

Before presenting our numerical results, we introduce and prove the following theorem, which is referred to later on, when our results are analyzed.  

\begin{theorem} \label{thm:laplace_optimal_rho}
    For the simple Laplace equation study case \eqref{eq:laplace} solved using the Aitken-accelerated Dirichlet-Neumann Schwarz scheme \eqref{eq:laplace_dd} applied to two sub-domains with an interface at $x = \bar{x}$, the optimal Aitken parameter $\rho^{(k)}$  satisfies
    \begin{equation}
\rho^{(k)}=\bar{x}.
\label{eq:aitken_laplace}
\end{equation}
\end{theorem}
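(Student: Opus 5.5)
The plan is to compute the fixed-point operator $T$ of \eqref{eq:laplace_dd} explicitly and then evaluate the Aitken formula \eqref{eq:aitken} in closed form. Since the problem is linear and one-dimensional, everything reduces to scalar affine maps.

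First I will solve the Dirichlet problem on $\Omega_1=(0,\bar{x})$. This gives $u_1^{(k)}(x)=g^{(k)}x/\bar{x}$, so the transmitted flux is ${u_1^{(k)}}'(\bar{x})=g^{(k)}/\bar{x}$. Next, the Neumann problem on $(\bar{x},1)$ has the affine solution $u_2^{(k)}(x)=1+(g^{(k)}/\bar{x})(x-1)$. Its trace at the interface is
\[
T(g^{(k)})=u_2^{(k)}(\bar{x})=1-\frac{1-\bar{x}}{\bar{x}}\,g^{(k)}.
\]
Thus $T$ is affine, $T(g)=1+\lambda g$ with $\lambda=-(1-\bar{x})/\bar{x}$, and its fixed point is $g^\star=\bar{x}$, consistent with $u(x)=x$. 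In the matching-grid 1D setting all projections $\Pi_{ij}$ are identities.

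Second, I will write the Aitken quantities in this scalar setting. The interface jump is $\mathcal{E}^{(k)}=T(g^{(k)})-g^{(k)}=1+(\lambda-1)g^{(k)}$, which is affine in $g^{(k)}$. Consequently
\[
\delta^{(k)}=(\lambda-1)\,d^{(k)},\qquad d^{(k)}=g^{(k)}-g^{(k-1)}.
\]
Substituting into \eqref{eq:aitken}, the factor $(d^{(k)})^2$ cancels, leaving $\rho^{(k)}=-1/(\lambda-1)=1/(1-\lambda)$. This is exactly the quantity $-1/a$ from \eqref{eq:rhok}, now exact rather than asymptotic because $T$ is affine. Using $1-\lambda=1+(1-\bar{x})/\bar{x}=1/\bar{x}$ gives $\rho^{(k)}=\bar{x}$ for every $k\ge 2$ with $d^{(k)}\neq 0$. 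As a consistency check, I will also verify that the update $g^{(k+1)}=g^{(k)}+\bar{x}\,\mathcal{E}^{(k)}=\bar{x}=g^\star$, so this value is optimal: it reaches the fixed point in one step.

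The main obstacle is notational rather than computational. The paper's definition of $d^{(k)}$ uses $\gamma_1^2(\mathbf{u}_1)$, which equals $g^{(k)}$ here, and $\mathcal{E}$ is defined as $\gamma_2(\mathbf{u}_2)-\gamma_1^2(\mathbf{u}_1)$. I need to confirm that these agree with $g^{(k)}$ and $T(g^{(k)})-g^{(k)}$ in the finite-difference setting. I will also handle the degenerate case $d^{(k)}=0$ (already converged), and note that the finite-difference discretization of a linear solution is exact, so the continuous computation carries over. The sign conventions for the Neumann flux (outward normals) must be checked so that $\lambda$ has the stated sign.
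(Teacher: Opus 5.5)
Your proposal is correct and follows essentially the same route as the paper: solve the Dirichlet and Neumann subproblems to get $\gamma(u_2^{(k)}) = 1 - \frac{g^{(k)}}{\bar{x}}(1-\bar{x})$, hence $\mathcal{E}^{(k)} = 1 - g^{(k)}/\bar{x}$ and $\delta^{(k)} = -d^{(k)}/\bar{x}$, so the Aitken ratio reduces to $\rho^{(k)} = \bar{x}$. Your check that the update then gives $g^{(k+1)} = \bar{x} = g^{\star}$, and your treatment of the degenerate case $d^{(k)} = 0$, are sensible additions that the paper does not spell out.
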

\begin{proof}
    It is straightforward to show that 
\begin{equation}
\gamma(u_2^{(k)})=1-\frac{g^{(k)}}{\bar{x}}(1-\bar{x}),
\label{eq:u2_laplace}
\end{equation}
so that the interface residual jump is $\mathcal{E}(\gamma(u_1), \gamma(u_2))=\gamma(u_2)-g^{(k)}=1-\frac{g^{(k)}}{\bar{x}}$.  Recall the Aitken formula for $\rho^{(k)}$  \eqref{eq:aitken}, which simplifies in the 1D setting to % for $k\geq 1$ relies on an adaptive choice of the parameter 
$\rho^{(k)}=-\frac{\delta^{(k)}d^{(k)}}{|\delta^{(k)}|^2}$
with $\delta^{(k)}=\mathcal{E}^{(k)}-\mathcal{E}^{(k-1)}=
1-\frac{g^{(k)}}{\bar{x}}-[\gamma(u_2^{(k-1)})-\gamma(u_1^{(k-1)})]=\frac{g^{(k-1)}-g^{(k)}}{\bar{x}}
$ 
and $d^{(k)}=\gamma(u_1^{(k)})-\gamma(u_1^{(k-1)})=g^{(k)}-g^{(k-1)}$.
It follows that, for the study case of interest, the optimal Aitken parameter is given by \eqref{eq:aitken_laplace}
This result is consistent with the previous related study 
%This result is in accordance with the study of the convergence range for iterative DD methods 
in \cite{funaro1988iterative}. 
\end{proof}

\begin{table}[h!]
\centering
\resizebox{0.8\textwidth}{!}{%
\begin{tabular}{c|cccccc|c|c}
\hline
\multicolumn{7}{c}{\textbf{Classical relaxation $\rho$ values}} & \textbf{Aitken} ($N_0=2$) &\textbf{Anderson}  \\ 
\hline
$\bar{x}$& $0.1$& $0.2$& $0.5$& $ 0.7$& $0.8$& $1$ & $\rho^{(1)}=1$ & $\rho=1$\\
\hline
0.1 & \cellcolor{green!20}$2$& \cellcolor{red!10}$\texttt{maxit}$&  \cellcolor{red!10}$\texttt{maxit}$& \cellcolor{red!10}$\texttt{maxit}$& \cellcolor{red!10}$\texttt{maxit}$& \cellcolor{yellow!20}$\texttt{maxit}$ & $3$ & $3$\\
\hline
0.2 & $24$ & \cellcolor{green!20}$2$& $25$ & \cellcolor{red!10}$\texttt{maxit}$& \cellcolor{red!10}$\texttt{maxit}$& \cellcolor{yellow!20}$\texttt{maxit}$ & $3$ & $3$ \\
\hline
0.5 & $50$ & $33$ & \cellcolor{green!20}$2$& $20$ & $35$ & \cellcolor{yellow!20}$\texttt{maxit}$ & $3$ & $3$\\
\hline
0.7 & $\texttt{maxit}$ & $\texttt{maxit}$ & $15$ & \cellcolor{green!20}$2$& $11$ & \cellcolor{yellow!20}$23$ & $3$ & $3$\\
\hline
0.8 & $\texttt{maxit}$ & $\texttt{maxit}$& $19$ & $10$ & \cellcolor{green!20}$2$& \cellcolor{yellow!20}$14$ & $3$ & $3$\\
\hline
\end{tabular}
}
\caption{1D Laplace equation.  Number of Schwarz iterations required to reach convergence  tolerances $\epsilon_{\text{abs}} = \epsilon_{\text{rel}}=10^{-8}$ for different values of $\rho$ and $\bar{x}$ when employing classical relaxation, Aitken acceleration and Anderson acceleration. }
\label{tab:laplace_iters}
\end{table}
\begin{table}[h!]
\centering
\resizebox{1\textwidth}{!}{%
\begin{tabular}{c|cccccc|c|c}
\hline
\multicolumn{7}{c}{\textbf{Classical relaxation $\rho$ values}} & \textbf{Aitken} ($N_0=2$) & \textbf{Anderson}  \textbf{Aitken} ($N_0=2$)\\  
\hline
$\bar{x}$& $0.1$& $0.2$& $0.5$& $ 0.7$& $0.8$& $1$ & $\rho^{(1)}=1$ & $\rho =1$\\
\hline
0.1 & \cellcolor{green!20}$5.6 \times 10^{-17}$& \cellcolor{red!10}$4 \times 10^{-1}$&  \cellcolor{red!10}$10^{29}$& \cellcolor{red!10}$10^{38}$& \cellcolor{red!10}$10^{41}$& \cellcolor{yellow!20}$10^{47}$ & $2.22 \times 10^{-16}$& $3.5 \times 10^{-13}$ \\
\hline
0.2 & $6.0 \times 10^{-9}$ & \cellcolor{green!20}$2.8 \times 19^{-17}$& $8.9 \times 10^{-9}$ & \cellcolor{red!10}$10^{19}$& \cellcolor{red!10}$10^{22}$& \cellcolor{yellow!20}$10^{29}$ & $2.22 \times 10^{-16}$& $5.2 \times 10^{-12}$ \\
\hline
0.5 & $7.1 \times 10^{-7}$ & $6.4 \times 10^{-9}$ & \cellcolor{green!20}$0$& $7.7 \times 10^{-9}$ & $9.2 \times 10^{-9}$& \cellcolor{yellow!20}$4.0 \times 10^{-1}$ & 0 & $3.13 \times 10^{-11}$ \\
\hline
0.7 & $3.0 \times 10^{-5}$ & $7.9 \times 10^{-9}$ & $6.9 \times 10^{-9}$ & \cellcolor{green!20}$0$& $10^{-9}$ & \cellcolor{yellow!20}$4.6 \times 10^{-9}$ & 0 & $3.44 \times 10^{-11}$ \\
\hline
0.8 & $9.0 \times 10^{-5}$ & $9.4 \times 10^{-8}$& $6.7 \times 10^{-9}$ & $3.2 \times 10^{-9}$ & \cellcolor{green!20}$0$& \cellcolor{yellow!20}$9.3 \times 10^{-9}$ & 0 &  $3.61 \times 10^{-11}$ \\
\hline
\end{tabular}
}
\caption{1D Laplace equation.  Error $\epsilon_{\text{abs}}$ obtained at the last Schwarz iteration for different values of $\rho$ and $\bar{x}$ when employing classical relaxation, Aitken acceleration and Anderson acceleration.}
\label{tab:laplace_errors}
\end{table}

\subsubsection{Schwarz with classical relaxation} \label{sec:results_laplace_classical}

First, we present results for the non-overlapping Schwarz alternating method applied to \eqref{eq:laplace} accelerated using classical relaxation  (Algorithm \ref{alg:DN_relax}).  
 % on 
%the interface error $|g^{(k+1)}-g^{(k)}|$ (reported in Table \ref{tab:laplace_errors}). 
%\irinanote{I wanted to double check something.  You are varying $\bar{x}$ in the analyses.  As you vary it, are you always using 20 mesh points in each sub-domain?  That would mean the sub-domains are not discretized with the same resolution and the mesh points in the sub-domains will not be coincident with the reference solution mesh points, unless $\bar{x} = 0.5$.  If you have different resolutions in the sub-domains, this brings in another variable, namely the impact of different resolutions in different sub-domains.}
Columns 2--7 of Table \ref{tab:laplace_iters} report the number of Schwarz iterations required to reach convergence for several different values of the $\rho$ parameter.  Similarly, columns 2--7 of Table \ref{tab:laplace_errors} report the errors obtained at the last Schwarz iteration for different values of $\rho$.  The reader can observe that the best convergence and minimal error is reached for the choice of $\rho=\bar{x}$ for all the values of $\bar{x} \in (0,1)$ tested (see the squares marked in green in Tables \ref{tab:laplace_iters} and \ref{tab:laplace_errors}).  This is consistent with the result in Theorem \ref{thm:laplace_optimal_rho}.  
In this case, the method converges
in two iterations, since the stopping criterion is based on the difference between successive iterates, $e^{(k)}_{\text{abs}}$. With the optimal relaxation parameter, the interface iteration yields the exact interface value after the first iteration, so that the true error is already zero, i.e. $e_1=|g^{(1)}-g^{\star}|=|g^{(1)}-\bar{x}|$, as expected from our theoretical analysis.  Figure \ref{fig:iter_solutions}(a) shows the solutions for Schwarz iterations $k=1,2$ when applying the optimal value of $\rho$ in the Schwarz algorithm with classical relaxation: the solution at iteration two coincides with the exact solution.

We now attempt to explain theoretically the non-convergence cases in Tables \ref{tab:laplace_iters} and \ref{tab:laplace_errors}, marked with a pink color.  
%The value in \eqref{eq:aitken_laplace} is optimal: indeed, 
If we go back to the classical relaxation formula
\begin{equation*}
g^{(k+1)}=(1-\rho) g^{(k)}+\rho \gamma(u_2^{(k)})
\end{equation*}
and replace the last term with the value obtained for the trace of the solution to the Neumann problem \eqref{eq:u2_laplace}, we obtain 
\begin{equation}
g^{(k+1)}=\Big[1-\frac{\rho}{\bar{x}}\Big]g^{(k)}+\rho.
\label{eq:relaxation_laplace}
\end{equation}
Now, we know from the exact solution of \eqref{eq:laplace} that $g^{\star}=\bar{x}$, where $g^{\star}$ is the solution to our fixed-point iteration problem.
%Now, we define the fixed-point $g^{\star}=\alpha$ known from the exact solution of \eqref{eq:laplace}. % the true sol is u(x)=x
The fixed-point iteration error  at iteration $k$ can thus be written as $e_{k}=|g^{(k)}-g^{\star}|=|g^{(k)}-\bar{x}|$. By using \eqref{eq:relaxation_laplace}, the error at iteration $k+1$ now takes the form 
\[
e_{k+1}=\left|1-\frac{\rho}{\alpha}\right| \, e_{k}, 
\] so that the fixed-point iteration error decays with rate 
\begin{equation} \label{eq:lambda}
\lambda(\rho)=\frac{e_{k+1}}{e_{k}}=\left|1-\frac{\rho}{\bar{x}}\right|
\end{equation}
For convergence, we would need the following constraint:  $|\lambda(\rho)|<1$.  In order for this to hold for \eqref{eq:lambda}, we must have that  $0< \rho <2\bar{x}<2$, 
since $\bar{x} \in (0,1)$. We observe, however, that for $\bar{x}$ very small, we have that $\frac{\bar{x}-\rho}{\bar{x}}\sim -\frac{\rho}{\bar{x}}$: in this case, the condition would be that
$-\frac{\rho}{\bar{x}}<1$, which implies that we must have $\rho<\bar{x}$ for convergence. This explains the divergence cases highlighted in pink in Tables \ref{tab:laplace_iters}-- \ref{tab:laplace_errors}, 
which correspond to $\alpha$ being very small and/or $\rho>\bar{x}$.  %, which corresponding to cells highlighted in pink in the tables.
The analysis above implies that 
%We also observe that 
an at least superlinear convergence is expected when $\lambda=0$.  This case corresponds to the condition that $\rho=\bar{x}$, which coincides with the optimal $\rho^{(k)}$ derived in Theorem \ref{thm:laplace_optimal_rho}.

Finally, we observe that,  for the case of no relaxation (i.e., $\rho=1$), which corresponds to the yellow-shaded columns in Tables \ref{tab:laplace_iters} and \ref{tab:laplace_errors}, the condition that the convergence factor $|\lambda(\rho)| < 1$ would require that 
$\left|\frac{\bar{x}-1}{\bar{x}}\right|<1,$ which implies that $\bar{x}>\frac{1}{2}$, using the assumption that $\bar{x} \in (0,1)$. This theoretical result is confirmed by our numerical experiments: for $\bar{x}<\frac{1}{2}$ (the first three yellow rows), we observe a much slower convergence and, in some cases, actual divergence of the Schwarz alternating method.

\subsubsection{Schwarz with Aitken acceleration}
%\textcolor{red}{
Next, we solve problem \eqref{eq:laplace} using Aitken-accelerated Schwarz  with $\rho^{(1)}=1$.  We find that, for the same tested values of $\bar{x}$, just three iterations are needed to reach convergence (see the second-to-last columns of Tables \ref{tab:laplace_iters} and \ref{tab:laplace_errors}).  In this case, the true error is zero after the second iteration, since, at $k=1$, we do not apply Aitken yet. In Figure \ref{fig:iter_solutions}(b) the solutions at iteration $k=1,2,3$ are depicted.
Not only is the optimal value $\rho^* = \bar{x}$ obtained within the algorithm upon convergence, the Aitken acceleration algorithm detects this value for all $k$, as well as for 
all values of $\bar{x}$ tested. 
 This numerical result is consistent with the theoretical result in \eqref{eq:aitken_laplace}.   
 
\subsubsection{Schwarz with Anderson acceleration}

To complete this brief analysis, we lastly consider Anderson-accelerated Schwarz for the specific values of  $m_{\rm{and}}=1$ and $\rho = 1$ (no relaxation) in the Anderson acceleration algorithm (c.f., \eqref{eq:anderson_update}). Since $m_{\rm{and}}=1$, Anderson with memory adaptation (Section \ref{sec:anderson_memory}) is not relevant.  The reader can observe from examining the last columns of Tables \ref{tab:laplace_iters} and \ref{tab:laplace_errors} that the Anderson-accelerated Schwarz converges in just three iterations to a solution whose error is close to machine precision.  

We now demonstrate that this numerical result is expected theoretically.  By computing the sub-solutions to \eqref{eq:laplace_dd} in $\Omega_1$ and $\Omega_2$ we obtain that
$T(g^{(k)})=\left(1-\frac{1}{\bar{x}}\right)g^{(k)}+1$.
The Anderson update in the unrelaxed case takes the form (c.f., \eqref{eq:anderson_update}) 
\begin{equation}
    g_{\rm{and}}^{(k+1)}=\alpha_0 T(g^{(k-1)})+\alpha_1 T(g^{(k)}),
\end{equation}
where we explicitly indicated with the subscript $\rm{and}$ that the new datum $g$ is found using the Anderson acceleration algorithm.  
The coefficients $\alpha_0$ and $\alpha_1$ solve the constrained minimization problem (c.f., \eqref{eq:anderson_constrained})
\[
\min_{\tilde{\alpha}_0, \tilde{\alpha}_1} |\tilde{\alpha}_0 f_{k-1}+\tilde{\alpha}_1 f_k| \quad \text{s.t. } \tilde{\alpha}_0+\tilde{\alpha}_1=1, 
\]
with $f_{k-1}=1-\frac{g^{(k-1)}}{\bar{x}}$ and $f_k=1-\frac{g^{(k)}}{\bar{x}}$.
The reader can easily check that the solutions to the minimization problem are 
\[
    \alpha_0=\frac{g^{(k)}-\bar{x}}{g^{(k)}-g^{(k-1)}},\quad
    \alpha_1=\frac{\bar{x}-g^{(k-1)}}{g^{(k)}-g^{(k-1)}}, 
\]
and thus we obtain the following update for the new iteration ($k=2$):
\begin{equation}
   g_{\rm{mix}}:=\alpha_0 g^{(k-1)}+\alpha_1 g^{(k)}=\bar{x}=g^{\star}.
   \label{eq:anderson_gmix}
\end{equation}
Since $T$ is an affine map in this simple study case, we can write 
\begin{equation*}
g^{(k+1)}_{\rm{and}}=\alpha_0 T(g^{(k-1)})+\alpha_1 T(g^{(k)})=T\left(\alpha_0 g^{(k-1)}+\alpha_1 g^{(k)}\right)= T(g_{\rm{mix}})=T(g^{\star})=g^{\star}
\end{equation*}
by using \eqref{eq:anderson_gmix} and the definition of the operator $T$. 

The above analysis shows that, for this simple study case, the Anderson-accelerated Schwarz method (Algorithm \ref{alg:DN_anderson}) should produce the exact fixed-point solution in just one acceleration step, after the two history-steps required to compute $g^{(k-1)}$ and $g^{(k)}$. This result is numerically verified  in the last columns of Tables \ref{tab:laplace_iters} and \ref{tab:laplace_errors}.  

%In Table \ref{tab:laplace_iters}, we report in the number of required Schwarz iterations for convergence.  We observe that Anderson converges in $1$ acceleration step, but requires $2$ startup iterations to compute residual history (see Algorithm \ref{alg:DN_anderson}); this is why $3$ iterations are computed in the last column of Table \ref{tab:laplace_iters}. The associated errors with respect to the analytical solution are reported in Table \ref{tab:laplace_errors}.  
Finally, in Figure \ref{fig:iter_solutions}(c) the Anderson-accelerated Schwarz solution is depicted for different Schwarz iterations. %\textcolor{red}{
Discarding iteration $k=1$, for which the Anderson history matrix $F_k$ is empty, we can numerically compute the solutions to the constrained minimization problem \eqref{eq:anderson_constrained}. 
The resulting optimal values of $\alpha_0$ and $\alpha_1$ are shown as histograms super-imposed on top of the solution plots in Figure \ref{fig:iter_solutions}(c).
%, which also show the computed solutions at each Schwarz iteration until convergence.  
%: the solutions values are reported in the subplots in Figure \ref{fig:anderson_relax_iter_solutions}. 
The reader can observe that the asymptotic value of $\boldsymbol{\alpha}^{\star}$ is $[0,1]^T$.  Since the Anderson residual is 0 after the first two iterations which create the history required to apply the approach, %: for this simple test case, after the two first iterations for history construction, Anderson residual is $0$, so that 
the constrained problem \eqref{eq:anderson_constrained}
is solved by zeroing the old non-negligible residual (by setting $\alpha_0=0$) and choosing $\alpha_1=1$ to satisfy the constraint. 
%In the same figure, Anderson solution is compared with the solution obtained by classical relaxation with $\rho$ set to the optimal (Aitken) choice for the case $\bar{x}=0.7$. 
\begin{figure}[h]
\centering
\subfloat[Schwarz solutions with classical relaxation and the optimal choice of $\rho$ for iteration $k= 1$ (left) and $k=2$ (right)]{
\includegraphics[scale=0.13]{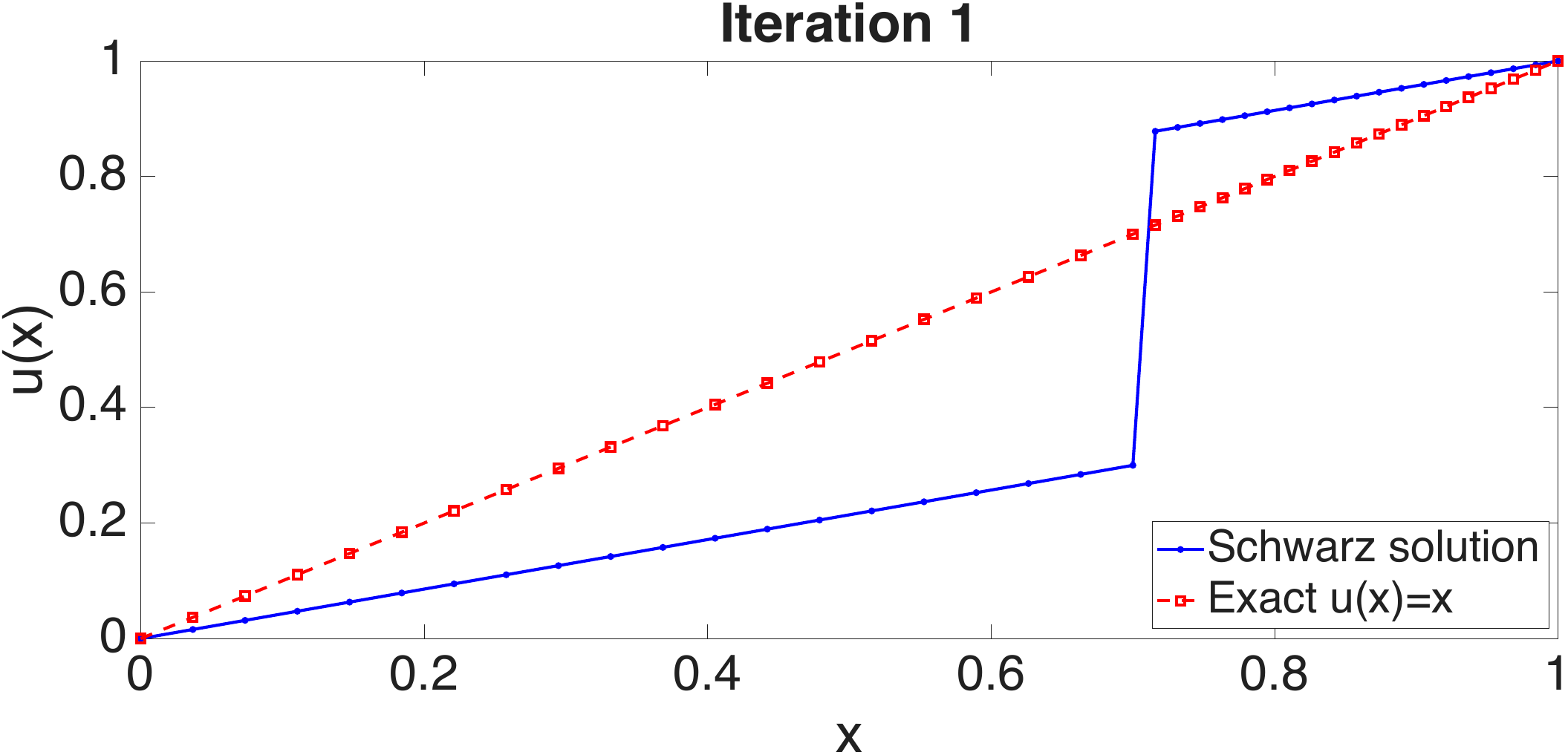}
\includegraphics[scale=0.13]{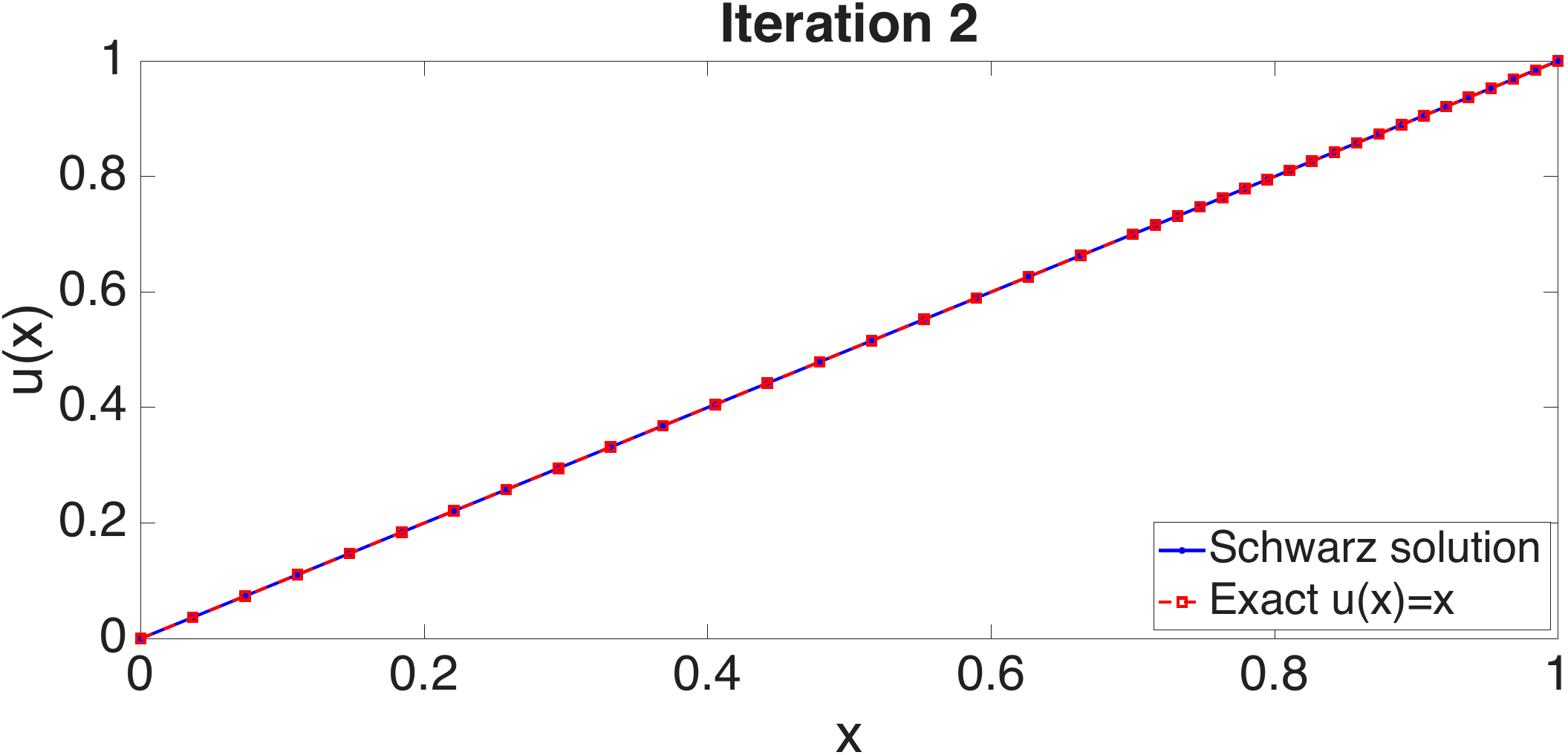}
}
\quad
\subfloat[Aitken-accelerated Schwarz solutions for iterations $k=1$ (left), $k=2$ (middle) and $k=3 $ (right)]
{
\includegraphics[scale=0.16]{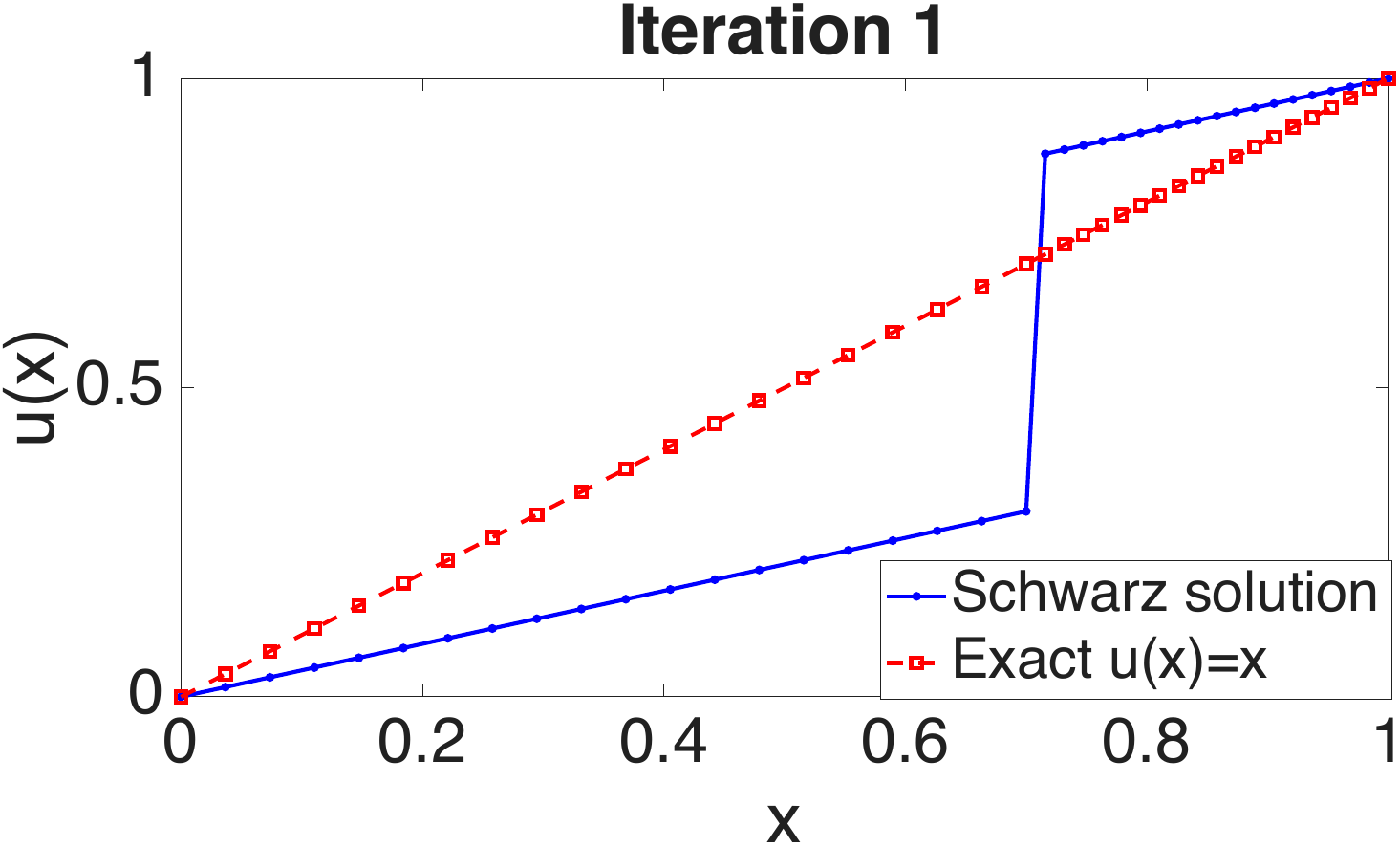}
\includegraphics[scale=0.16]{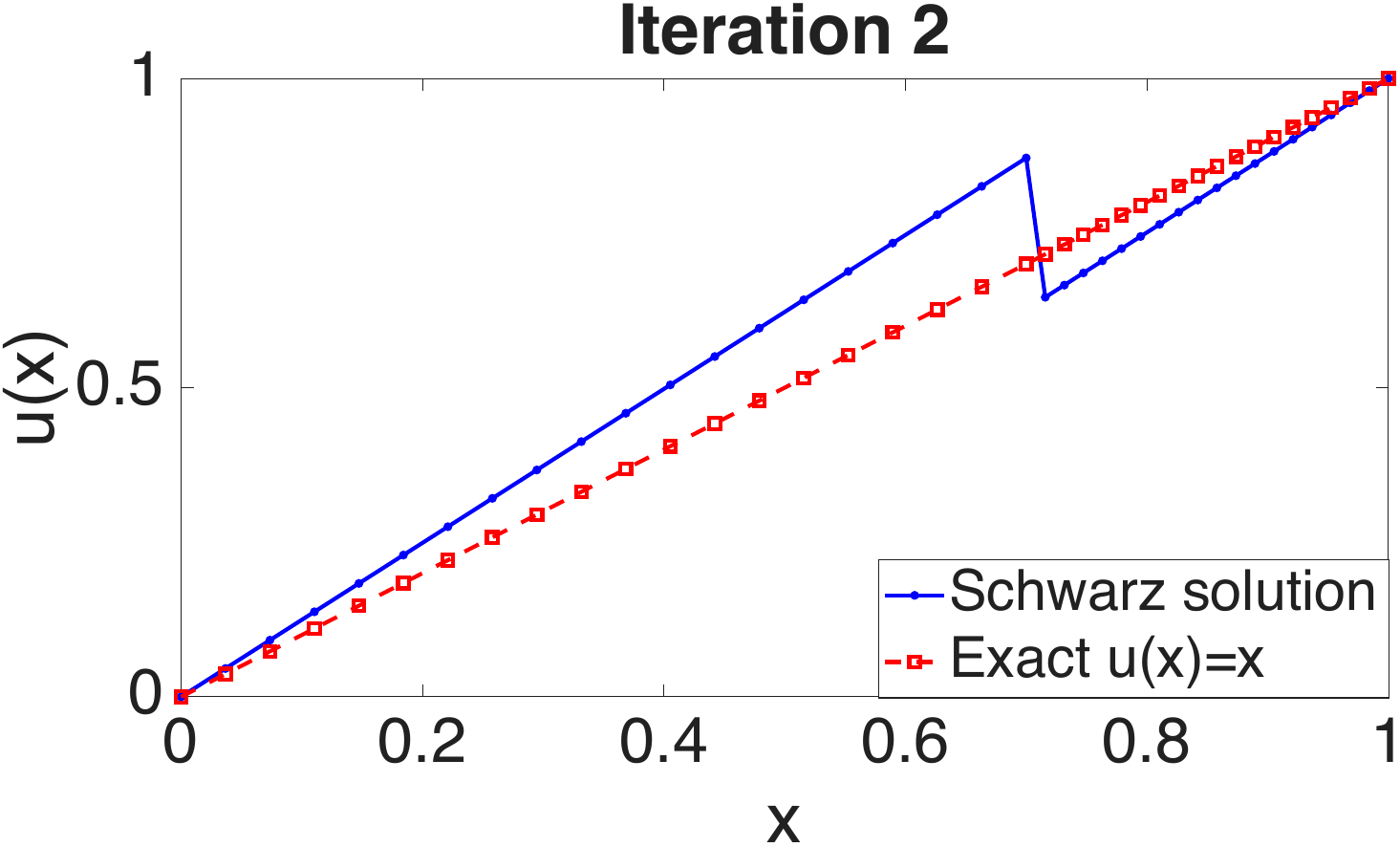}
\includegraphics[scale=0.16]{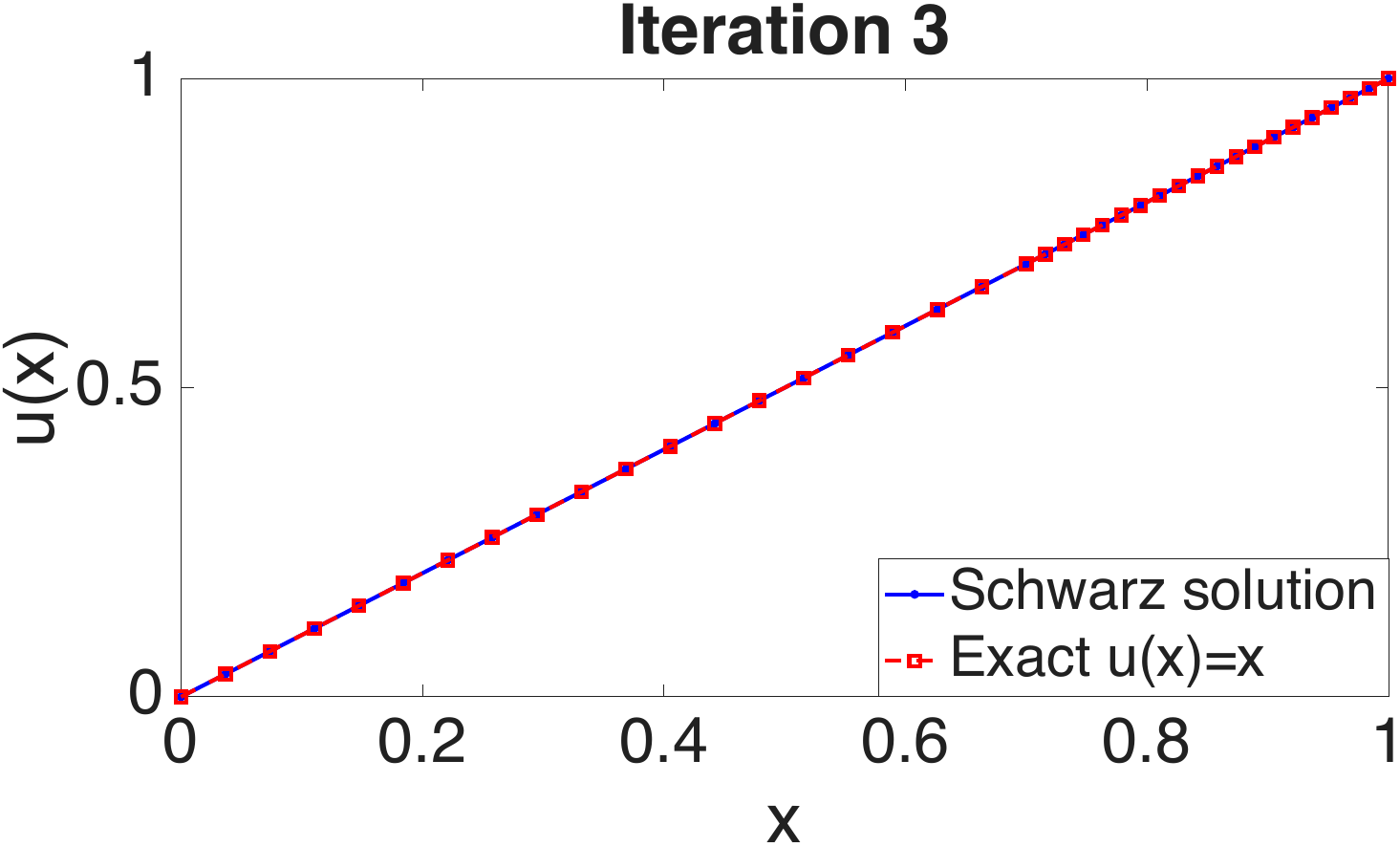}
}
\quad
\subfloat[Anderson-accelerated Schwarz solutions for iterations $k=1$ (left), $k=2$ (middle) and $k=3 $ (right)]{
\includegraphics[scale=0.12]{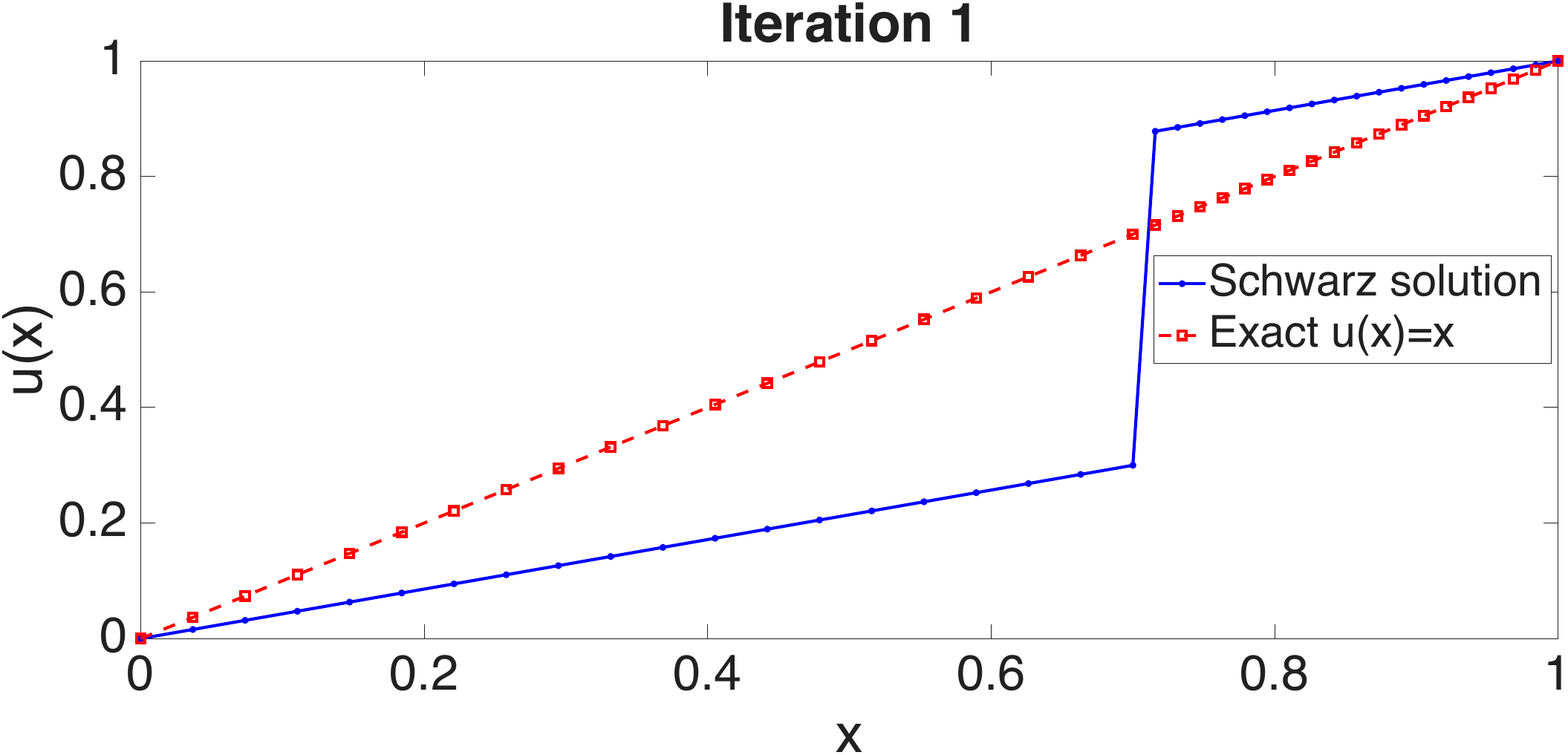}
\includegraphics[scale=0.12]{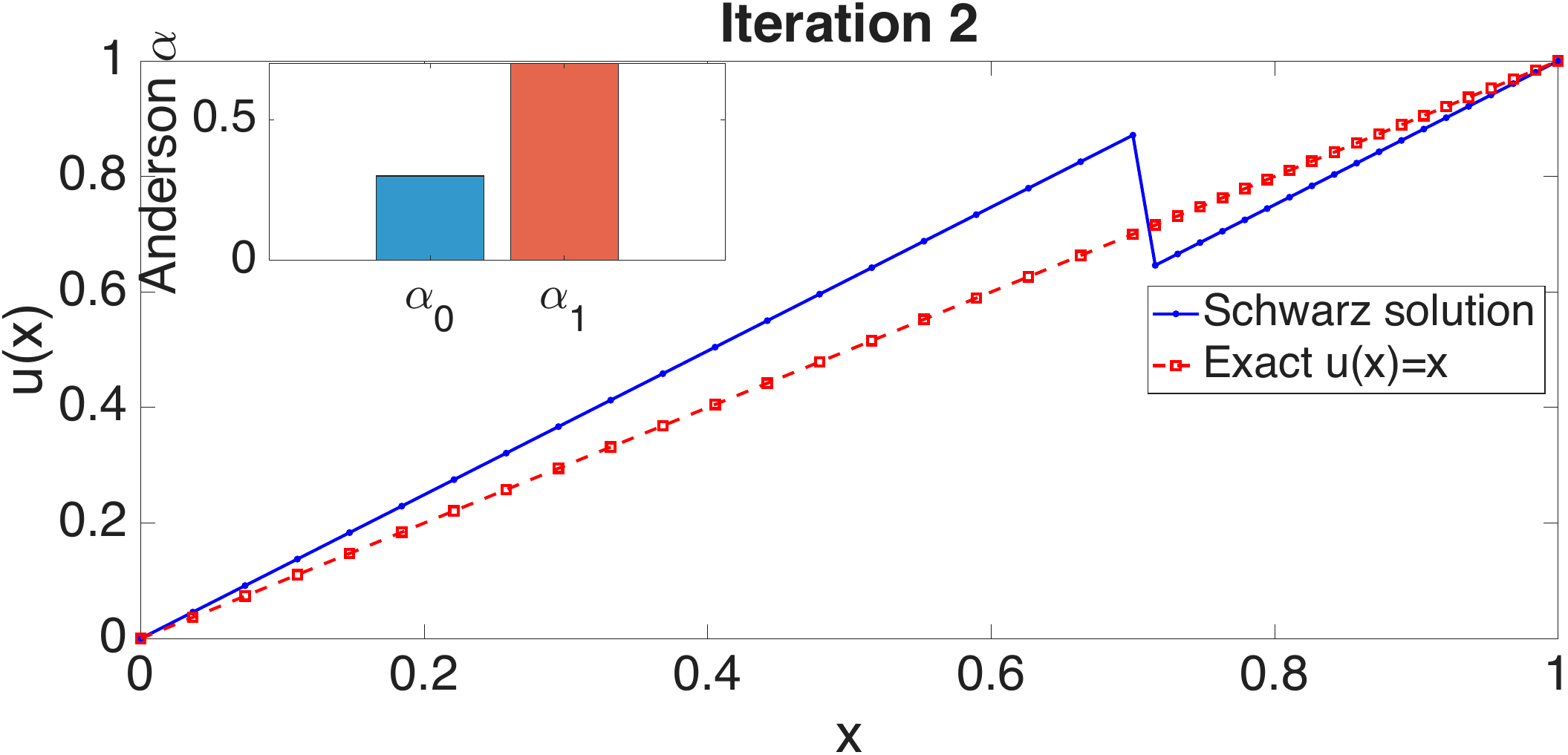}
\includegraphics[scale=0.12]{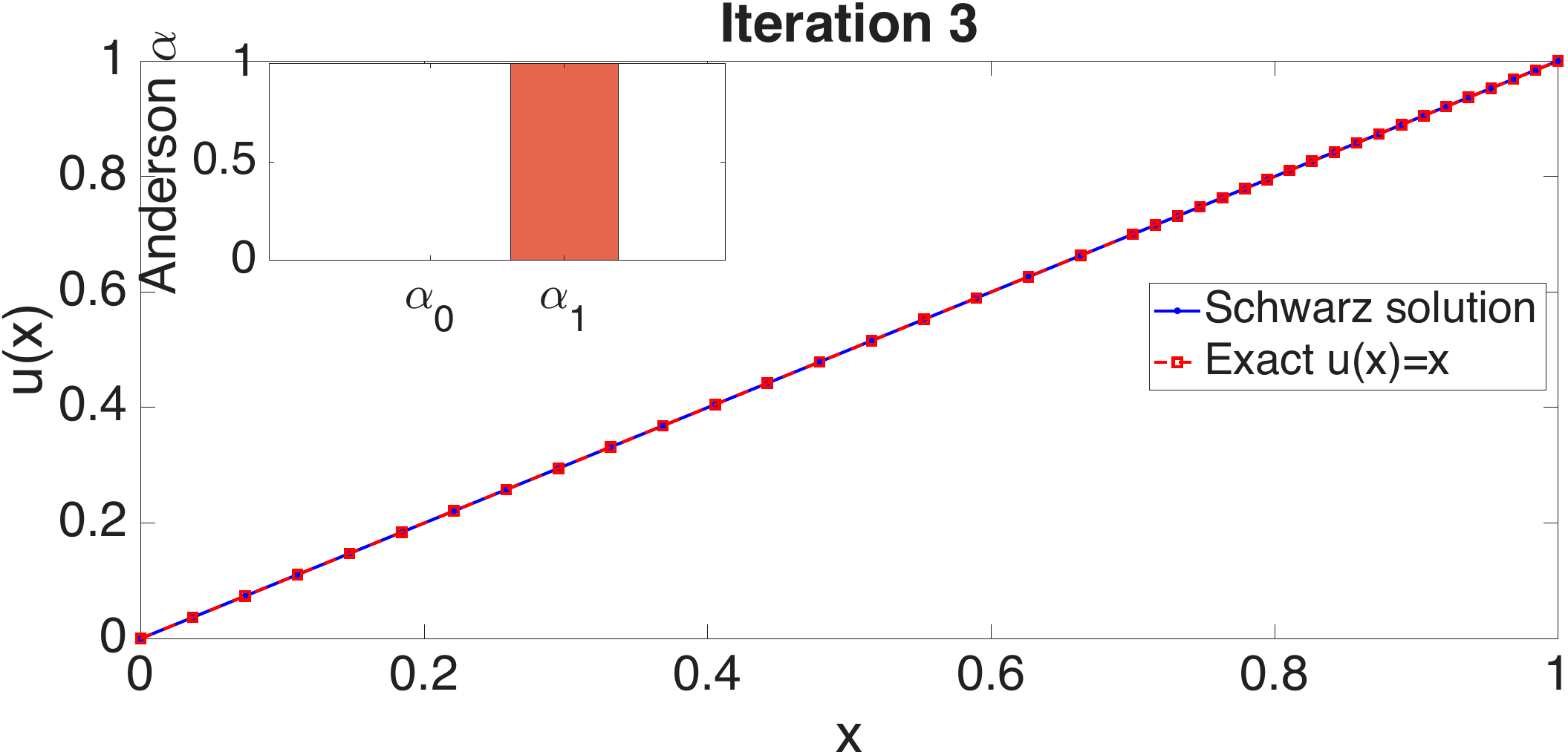}
}
\caption{1D Laplace equation.  (a) Classical relaxation-accelerated Schwarz solutions with the optimal choice of $\rho=\bar{x}$. (b) Aitken accelerated Schwarz solutions with $\rho^{(1)}=1$.  (c) Unrelaxed Anderson-accelerated Schwarz solutions.   The middle and right sub-plots in (c) also show the values of $\alpha_0$ and $\alpha_1$ in histogram form for $k=2$ and $k=3$, respectively.  All subplots are for the specific case of $\bar{x} = 0.7$.  }
\label{fig:iter_solutions}
\end{figure}
\end{subequations}

%\irinanote{There are some more cleanups/clarifications that can be made in this section.  I can do this in a subsequent iteration, maybe after the section is moved to the body of the paper.}

\subsection{Two-dimensional nonlinear elasticity}
For our second example, we numerically study the convergence of the Dirichlet-Neumann Schwarz alternating method with different relaxation/ acceleration strategies for a steady nonlinear elasticity problem.  
In particular, we compare the numerical performance of the relaxation and acceleration methods summarized in Section \ref{sec:acceleration} on a 2D simplification of the cuboid test proposed in \cite{mota2017schwarz}.
More specifically, we solve a nonlinear elasticity problem $-\nabla \cdot \mathbf{P}(\mathbf{F}(\mathbf{u})) = \mathbf{0}$ in $\Omega$ expressed in terms of the the deformation gradient  $\mathbf{F}(\mathbf{u}) = \mathbb{I} + \nabla \mathbf{u}$ and the displacement $\mathbf{u}$. The stress tensor is given by the first Piola-Kirchhoff stress tensor \( \mathbf{P} \):
\begin{equation} \label{eq:P}
\mathbf{P}(\mathbf{F}) = \lambda_2 \left( \mathbf{F} - \mathbf{F}^{-T} \right) + \lambda_1 \left( \log(\det(\mathbf{F})) \right) \mathbf{F}^{-T},
\end{equation}
implying a Neohookean-type material model \cite{Mota:2011}.  The Lamé constants \( \lambda_1 \) and \( \lambda_2 \) are defined in terms of the Young’s modulus \( E \) and Poisson’s ratio \( \nu \) as
$\lambda_1 = \frac{E \nu}{(1 + \nu)(1 - 2\nu)},
\lambda_2 = \frac{E}{2(1 + \nu)}.$  
We depict, in Figure \ref{fig:cuboid_glo}(a), the global computational domain together with the system boundaries, denoted by $\Sigma_i$ for $i=1,...,4$. 
The problem is characterized by the following Dirichlet BCs: $\mathbf{u}\cdot \mathbf{n}=0$ at boundaries $\Sigma_1$ and $ \Sigma_3$; $\mathbf{u}\cdot \mathbf{n}=1$ on boundary $\Sigma_2$ and % with $\Delta=1$. On the top boundary $\Sigma_4$ a Neumann boundary condition is imposed on the stress tensor:
   $ \mathbf{P}(\mathbf{F}(\mathbf{u})) \cdot\mathbf{n} = \mathbf{0}$ on the top boundary $\Sigma_4$.
We set the same material parameters values as in the three-dimensional (3D) example in \cite{mota2017schwarz}, in particular, we use $E = 1440$ Pa and $\nu = 0.25$.  
Figure \ref{fig:cuboid_glo}(c) shows the displacement magnitude of the global solution $\mathbf{u}$ in $\Omega$.  The reader can observe that the BCs applied have caused the domain to stretch by $1$ m in the $x$ direction.

\begin{figure}[h!]
\centering
\subfloat[Global domain $\Omega$]{
    \includegraphics[scale=0.17]{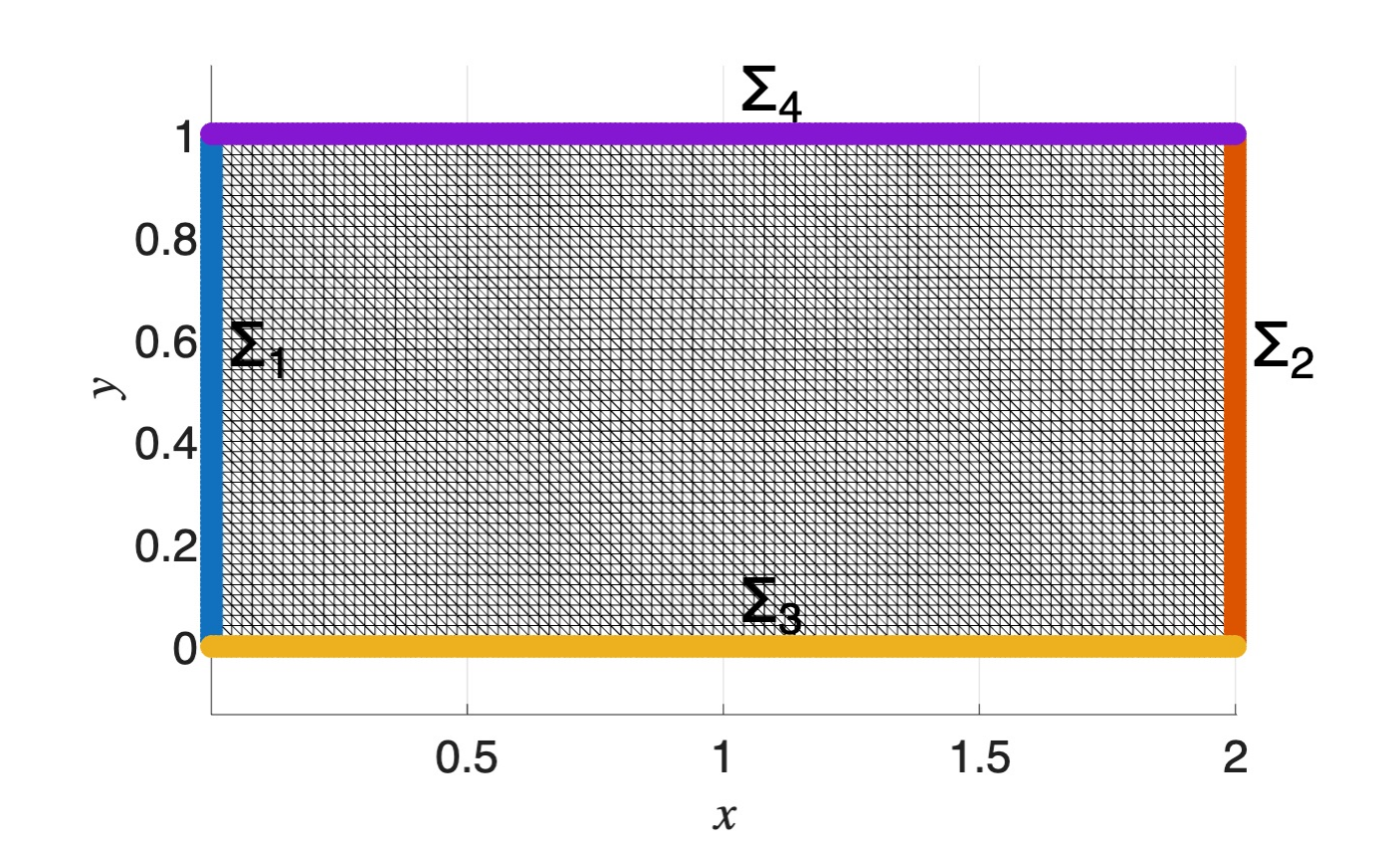}
    }
  \subfloat[Two sub-domain DD of $\Omega$]{
   \begin{tikzpicture}
  % include the figure
  \node[inner sep=0] (img) {\includegraphics[width=0.27\textwidth]{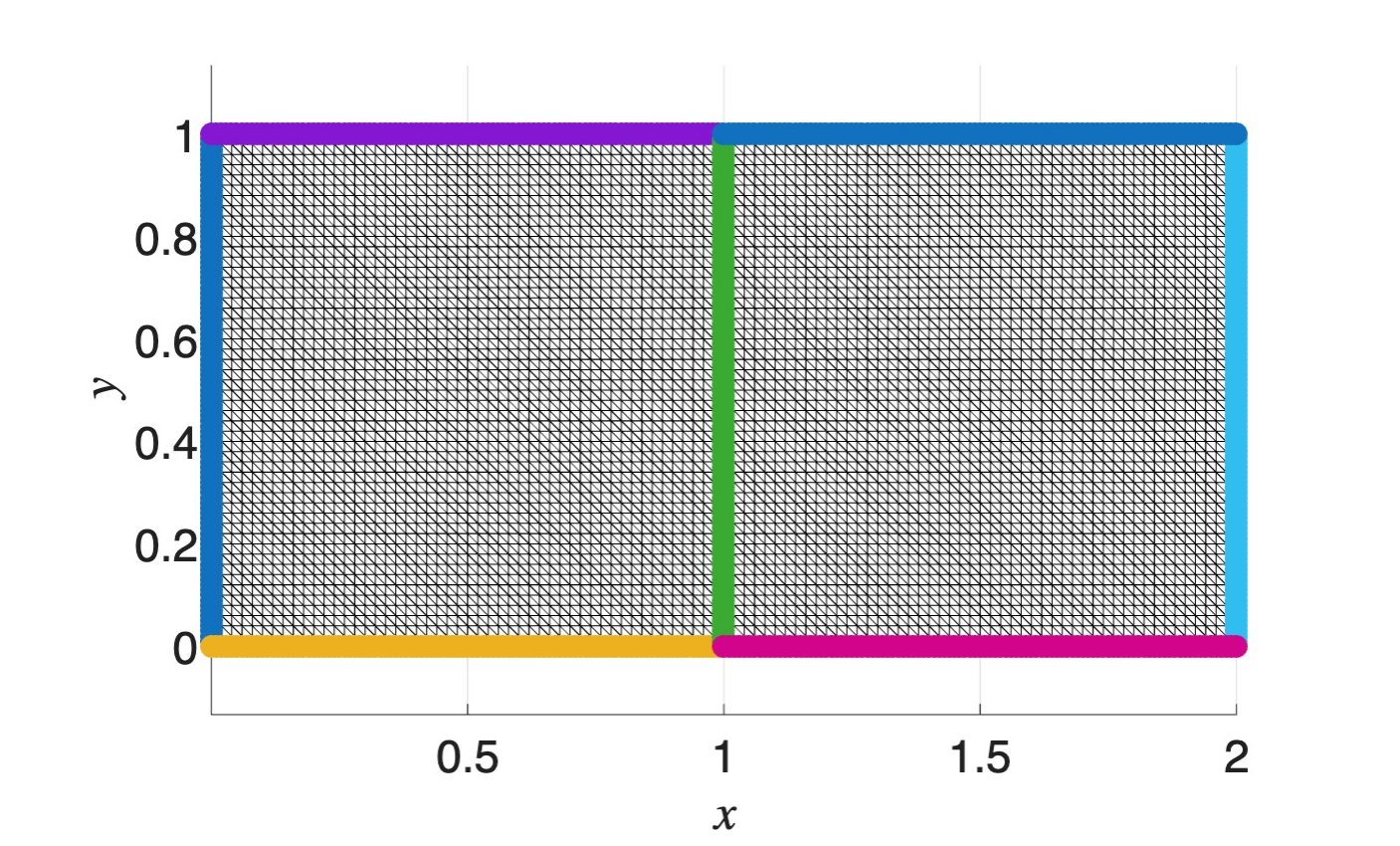}};

  % overlay text
  \node at (1.5,0.6) {$\Omega_1$};
  \node at (-1.3,0.6) {$\Omega_1$};

  % overlay arrows
  %\draw[->,thick] (0.5,0.2) -- (1.2,0.9);
  % Internal interface Gamma
  \node[black, right, font=\Large] at (0.18,0.5) {$\Gamma$};
  % Normal n_12 (pointing from Omega_1 to Omega_2)
%  \draw[->, thick] (0,0.15) -- ++(0.5,0)
%      node[midway, below] {$\mathbf{n}_{12}$};
\end{tikzpicture}
   }
    \subfloat[Global solution $|\mathbf{u}|$ ]{
    \includegraphics[scale=0.24]{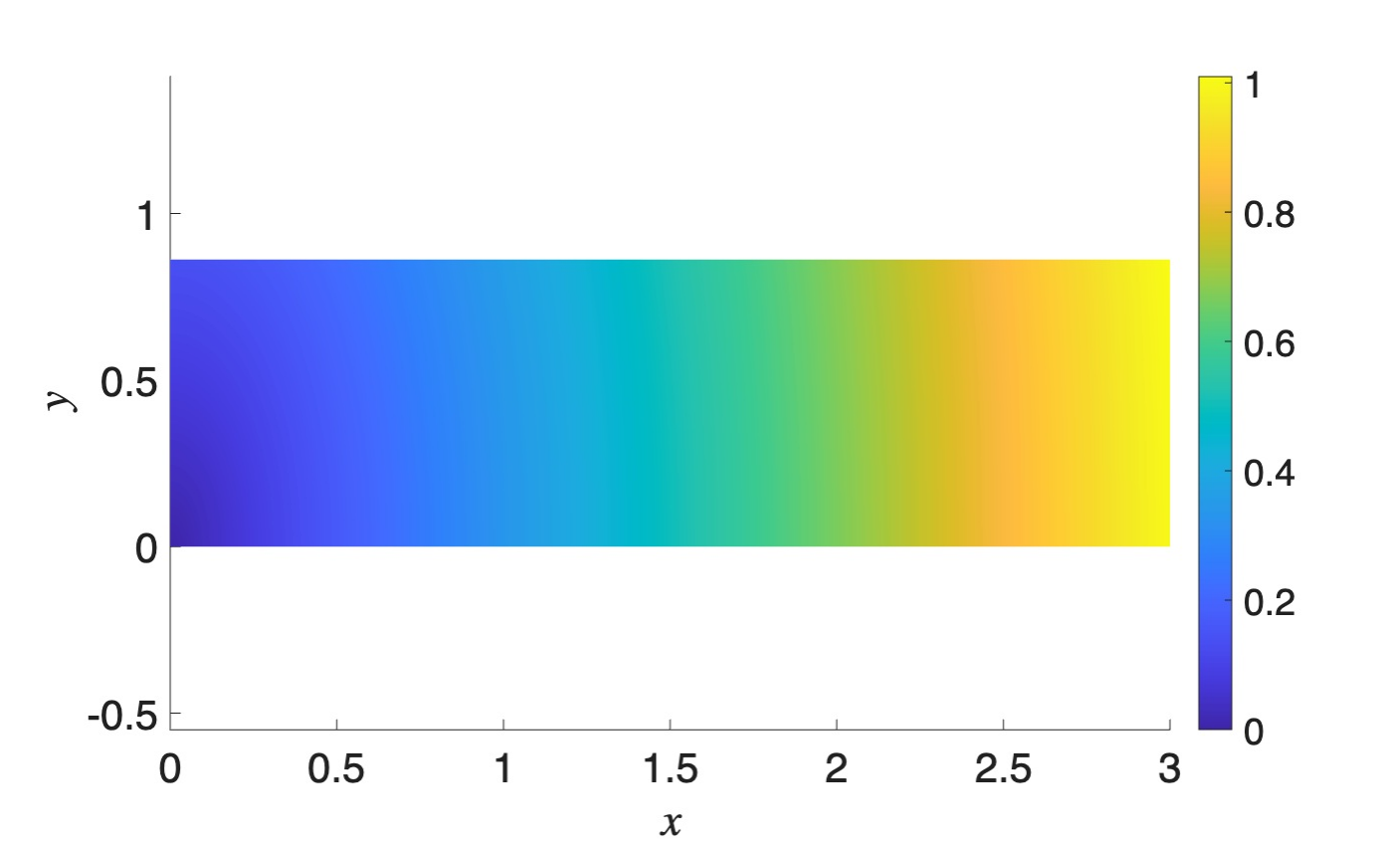}
    }
    \caption{2D nonlinear elasticity.  (a) The global domain.  (b) The global displacement magnitude solution.  (c) Non-overlapping partition with $N_{\text{dd}} = 2$.  }
    \label{fig:cuboid_glo}
    \end{figure}

In studying the performance of the non-overlapping Schwarz alternating method, the global domain $\Omega$ is decomposed into two non-overlapping sub-domains discretized with uniform triangular meshes comprised of $10201$ nodes, as shown in Figure \ref{fig:cuboid_glo}(b). The characteristic measure of the finite element size $h$ is set to $0.1$ for both the global mesh and the sub-meshes.  While the Schwarz alternating method is capable of coupling sub-domains having different meshes and mesh resolutions, we leave this use case to a future work.  In applying the non-overlapping Schwarz alternating method, we specify the following algorithmic parameters: $\epsilon_{\rm{abs}} = \epsilon_{\rm{rel}}=10^{-6}$, $\texttt{maxit}=100$, and  $\mathbf{g}^{(1)}=\mathbf{0}$.  For the Newton nonlinear solver, we use a tolerance of  $10^{-8}$.

%\begin{remark}
%    The reader can observe that the meshes are conformal at the interface boundary $\Gamma$.  We 
%assume in this work that the domain partition is given; we note that it may be possible to optimize the convergence of the Schwarz alternating method through a clever choice of the domain decomposition, 
%but doing so is beyond the scope of this work.  
%\end{remark}

%he choice of an \textit{a priori} partition which could enhance the convergence properties of the domain decomposition algorithm is beyond the scope of this work.

%\textcolor{red}{
\begin{remark}
    For nonlinear PDEs such as the one considered here, each nonlinear sub-problem within the Schwarz algorithm requires an initial guess for the solution $\mathbf{u}_i^{(k)}$ at each Schwarz iteration $k$. 
    While performing numerical studies on the 2D nonlinear elasticity problem, we discovered that the Newton solver performance can be improved by: (i) solving a linear variant of the problem on-the-fly at each Schwarz iteration as a pre-processing step, and (ii) using the solutions to this linear problem as initial guesses for the Newton algorithm.  %be stabilized by solving a linear elastic version of the same problem with the same interface condition: this initial guess calculation is done as a simple pre-processing step before applying the Newton's solver to the targeted nonlinear problem.  
    %Empirically, we observe that using the solution of a linear elasticity solver with the same external boundary conditions as those described above and with interface boundary conditions induced by the previous iteration $\mathbf{u}_i^{(k-1)}$ is convenient to compute a good initial guess at iteration $k$ and to limit instabilities in the local Newton's solvers.
% This Schwarz initialization approach was utilized herein.  
\end{remark}

We define the following error metric to assess the accuracy of the DD methods %in each sub-domain $i=1, \ldots, N_{\rm{dd}}$
\begin{equation} \label{eq:max_norm_error}
e_{max}(i):=\max \left(\sqrt{(u_{i,x}-{u_x}|_{\Omega_i})^2+(u_{i,y}-{u_y}|_{\Omega_i})^2}\right). 
\end{equation}
Here, $\mathbf{u}_i:\Omega_i\rightarrow \mathbb{R}^2$ denotes the numerical sub-solution in $\Omega_i$, where $\mathbf{u}_i=[u_{i,x}, u_{i,y}]^T$, with $u_{i,x}$ and $u_{i,y}$ denoting the horizontal and vertical components of the displacement, respectively. We denote by $\mathbf{u}|_{\Omega_i}: \Omega_i \to \mathbb{R}^2$ the restriction of the global solution to sub-domain $\Omega_i$, where %---computed on $\Omega$ to the $i^{th}$ sub-domain, i.e. 
$\mathbf{u}|_{\Omega_i} = [u_x, u_y]^T=\mathbf{u}(\mathbf{x}) \; \forall \mathbf{x} \in \Omega_i$. %We note that the grid on $\Omega$ is conformal with the sub-domain meshes $\Omega_1$ and $\Omega_2$, making \eqref{eq:max_norm_error} straightforward to compute.  
While it is common to study the accuracy and convergence of numerical methods using relative, rather than absolute errors, the displacement solution to the 2D nonlinear elasticity problem is within the range $[0,1]$, so that an absolute error is a reasonable metric to utilize.  
%\irina{}{
%\begin{equation} \label{eq:max_norm_error}
%e_{max}(i):=\max \left(\sqrt{((\mathbf{u}_i)_x-{\mathbf{u}_x}|_{\Omega_i})^2+((\mathbf{u}_i)_y-{\mathbf{u}_y}|_{\Omega_i})^2}\right).
%\end{equation}
%where $\mathbf{u}_i:\Omega_i\rightarrow \mathbb{R}^2$ denote the numerical sub-solutions at $\Omega_i$ with $\mathbf{u}_i=[u_x, u_y]^T$, with $u_x$ and $u_y$ the horizontal and vertical components of the displacement, respectively. We denote by $\mathbf{u}|_{\Omega_i}: \Omega_i \to \mathbb{R}^2$ the restriction of the global solution---computed on $\Omega$ to the $i^{th}$ sub-domain, i.e. $\mathbf{u}|_{\Omega_i}=\mathbf{u}(\mathbf{x}) \; \forall \mathbf{x} \in \Omega_i$. We note that the grid on $\Omega$ is conformal with the sub-domain meshes $\Omega_1$ and $\Omega_2$.}

\subsubsection{Schwarz with classical relaxation}
We begin by studying the convergence of the non-overlapping Schwarz alternating method accelerated using the classical relaxation scheme (Algorithm \ref{alg:DN_relax}).  
In Figure \ref{fig:neohook_relax}, we depict the absolute and relative errors with respect to the Schwarz iteration $k$ and for different values of the relaxation parameter $\rho \in \{0.1,0.2,0.3,0.4,0.5\}$.  For $\rho > 0.5$, the algorithm did not converge. The reader can observe by examining Figure \ref{fig:neohook_relax} that an intermediate value of $\rho = 0.4$ gives rise to convergence in the fewest number of Schwarz iterations. 
\begin{figure}[h]
    \centering
    \includegraphics[width=0.4\linewidth]{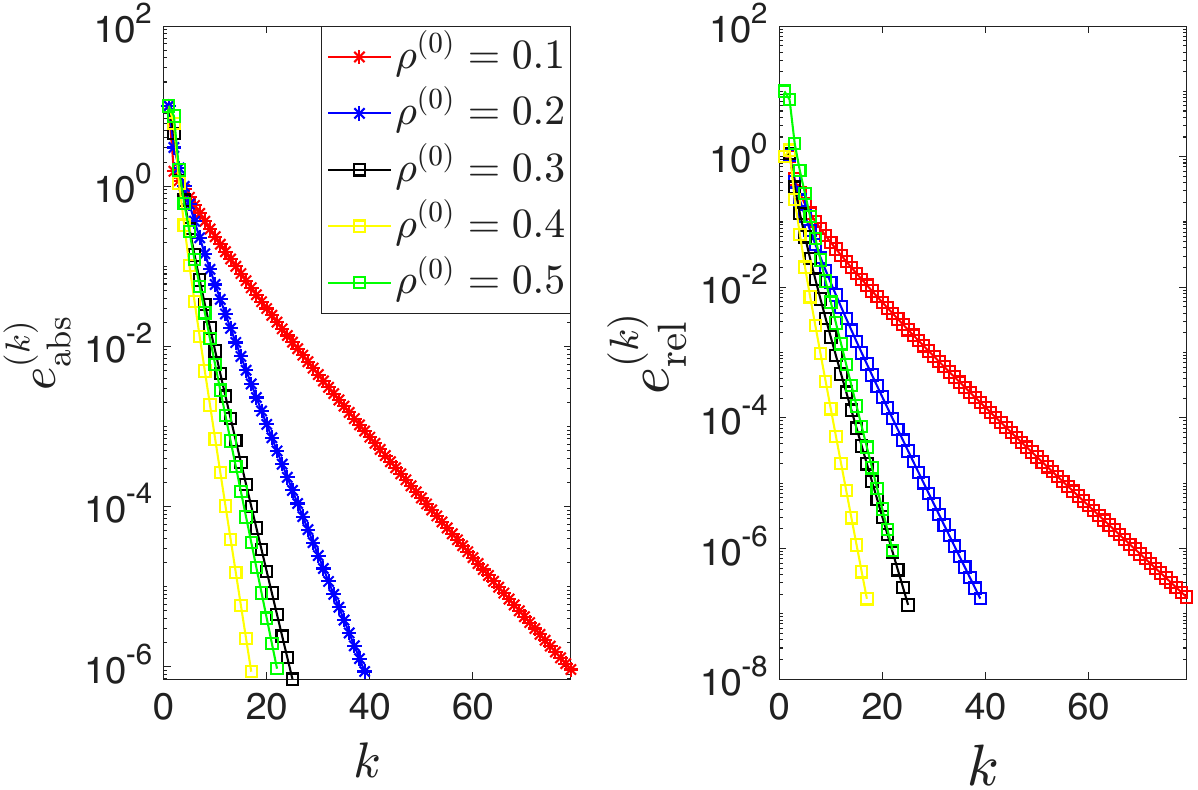}
    \caption{2D nonlinear elasticity with $N_{\text{dd}} = 2$.  Convergence of the Dirichlet-Neumann Schwarz alternating method with classical relaxation for different values of $\rho$. 
    The left panel is showing convergence of $e_{\text{abs}}^{(k)}$, whereas the right panel shows convergence of $e_{\text{rel}}^{(k)}$.
    }
    \label{fig:neohook_relax}
\end{figure}
\begin{comment}
\begin{table}[H]
\centering
\begin{tabular}{|c|c|c|c|}
\hline
$\rho$& $e_{max}(1)$& $e_{max}(2)$ &  $k$\\
\hline
$0.1$&  $1.23 \times 10^{-6}$&  $6.27 \times 10^{-7}$&$79$\\
$0.2$& $5.06 \times 10^{-7}$& $2.58 \times 
10^{-7}$&$39$\\
$0.3$&  $2.25 \times 10^{-7}$& $1.15\times 10^{-7}$&$25$\\
$0.4$&  $1.60 \times 10^{-7}$& $8.20 \times 10^{-8}$& $17$\\
$0.5$& $5.78 \times 10^{-8}$& $1.14 \times 10^{-7}$& $22$\\
\hline
\end{tabular}
\caption{Error values \eqref{eq:max_norm_error} for different values of $\rho$ with classical relaxation.}
\label{tab:relax}
\end{table}
\end{comment}

\subsubsection{Schwarz with Aitken acceleration}
% Aitken
Next, we study convergence of the non-overlapping Schwarz alternating method with Aitken acceleration.  
Figures \ref{fig:neohook_aitken}(a) and (b) show the Schwarz absolute and relative errors as a function of the Schwarz iteration $k$ for different values of  %with respect to Schwarz iterations $k$ are depicted in the case of Aitken acceleration for different values of the initial parameter 
$\rho^{(1)}$. We  only depict results for $\rho^{(1)} \in \{0.1,0.2,0.3,0.4,0.5\}$ since the algorithm did not converge for for $\rho^{(1)}>0.5$.
We show results for two different values of the hyperparameter $N_0$: $N_0 = 10$ (Figure \ref{fig:neohook_aitken}(c)) and $N_0 = 2$ (Figure \ref{fig:neohook_aitken}(d)).  We remind the reader that, to the best of our knowledge, the introduction of the $N_0$ parameter into the Aitken scheme is new to this work.
\begin{figure}[h]
        \centering
        \subfloat[$N_0=10$]{
        \includegraphics[width=0.4\linewidth]{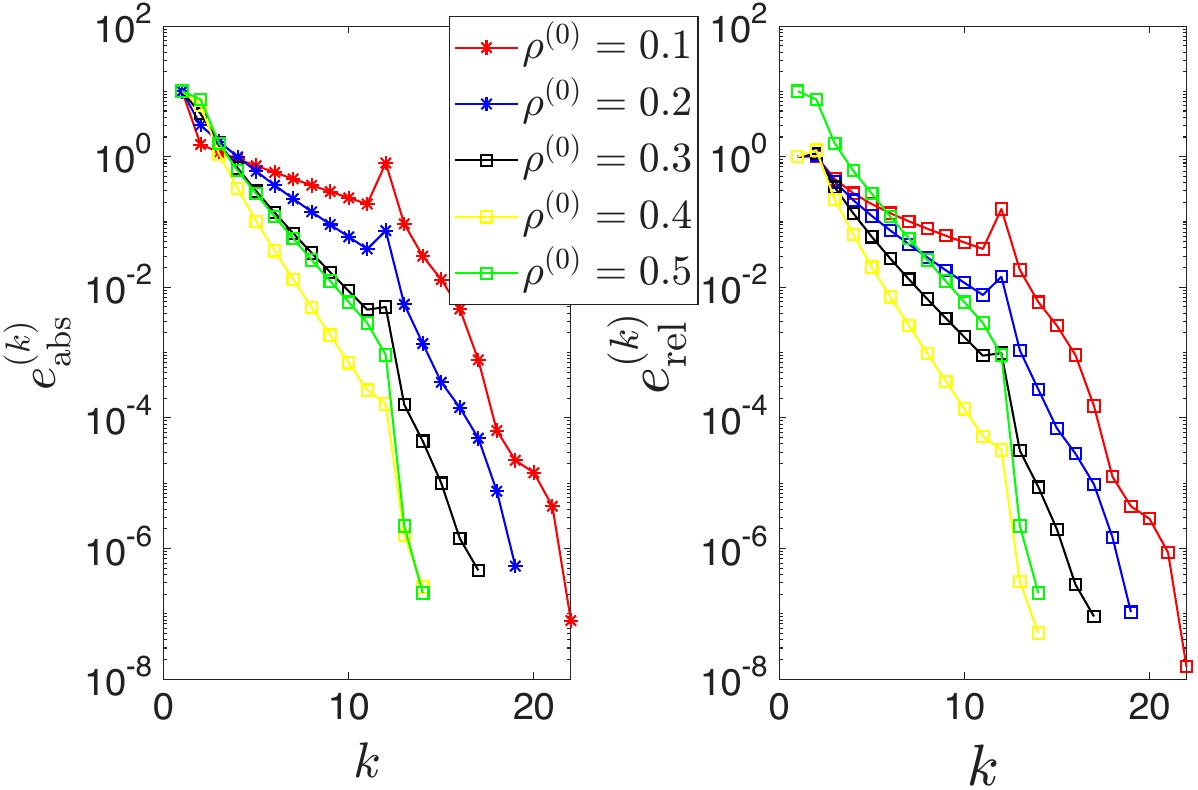}
         }
         \subfloat[$N_0=2$]{
          \includegraphics[width=0.4\linewidth]{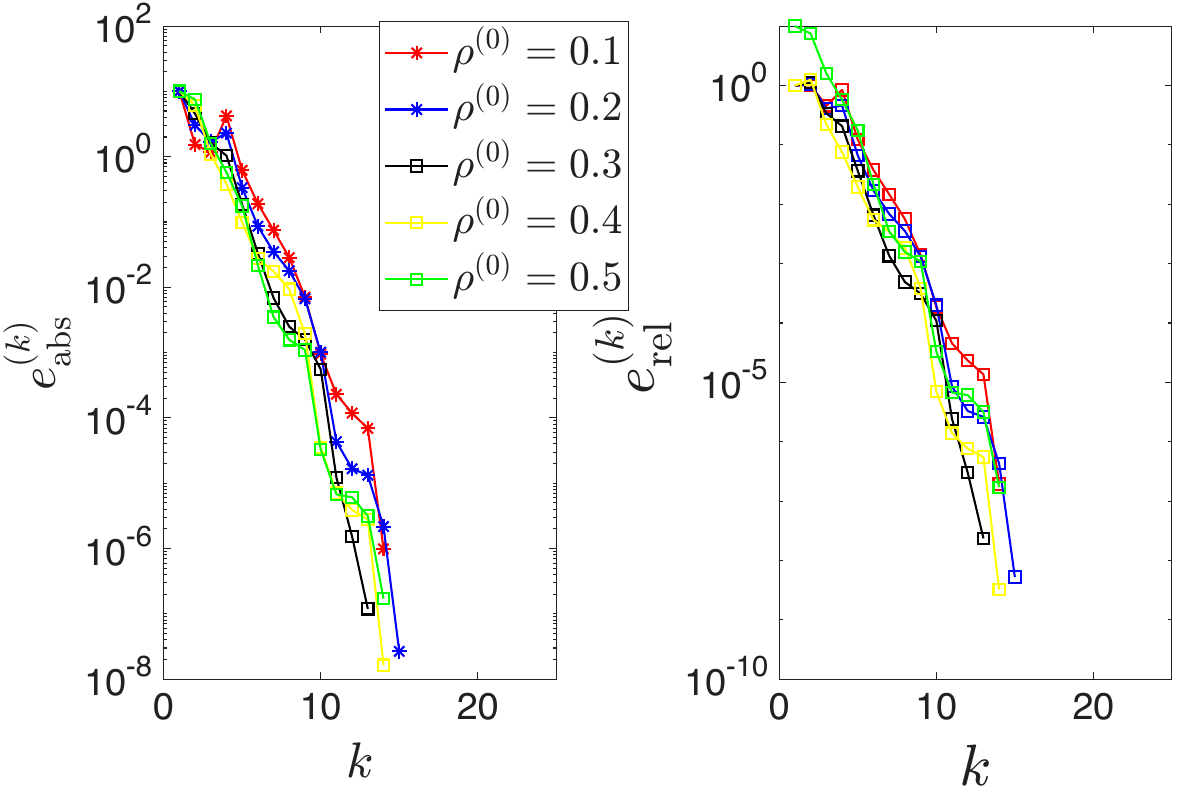}
         }
        \quad 
          \subfloat[$N_0=10$]{
       \includegraphics[width=0.32\linewidth]{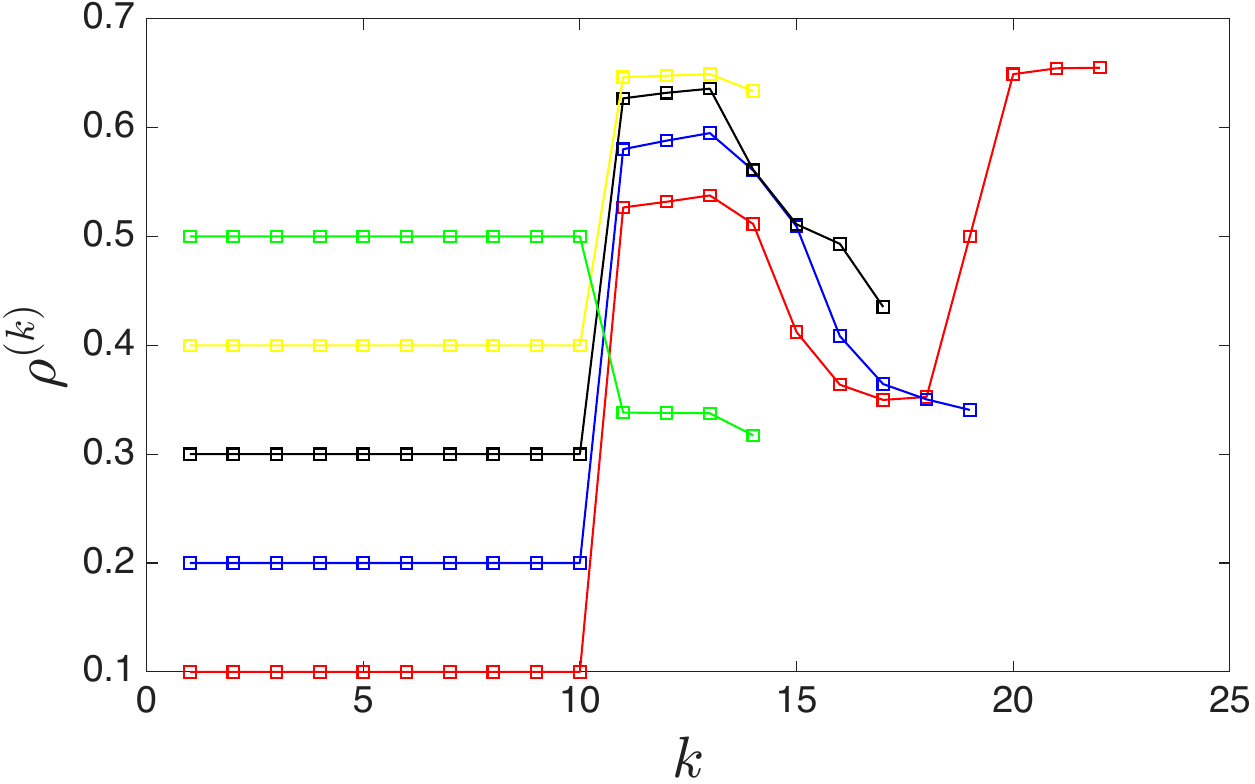}
       }
       \subfloat[$N_0=2$]{
       \includegraphics[width=0.32\linewidth]{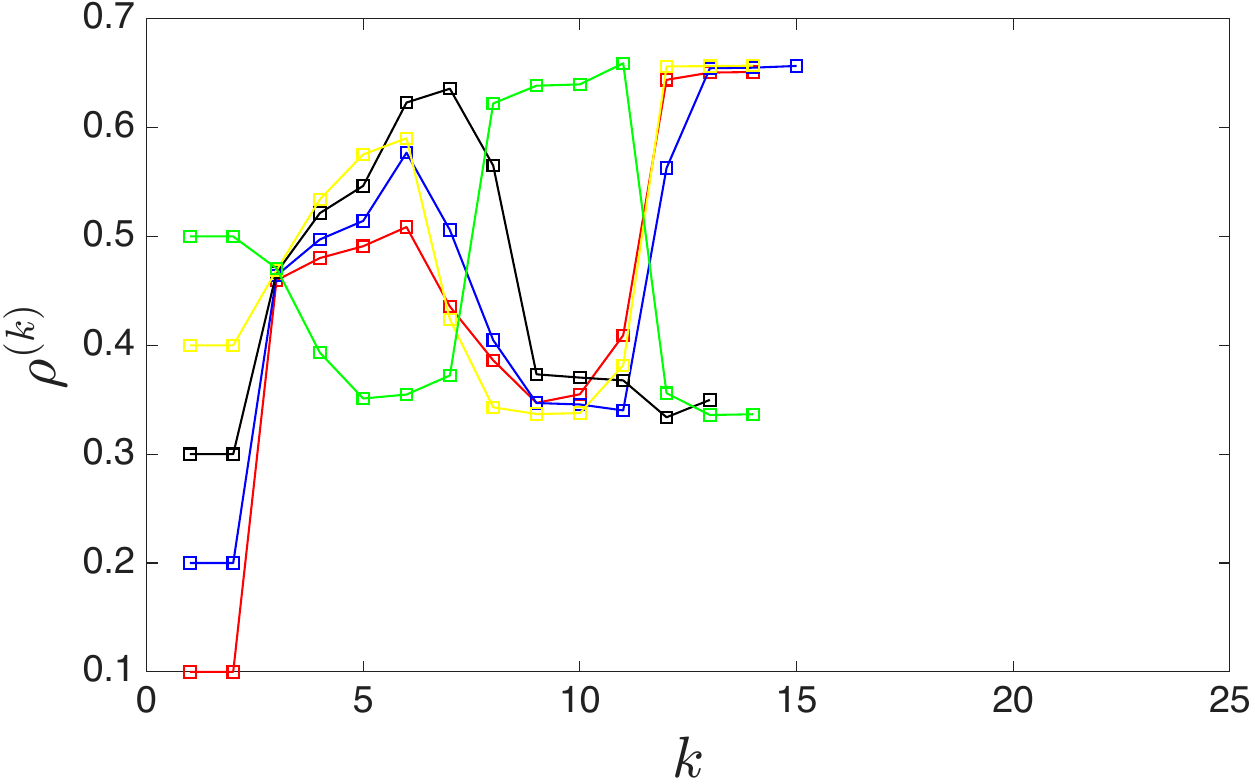}
       }
         \caption{2D nonlinear elasticity with $N_{\text{dd}} = 2$.  Sub-plots (a) and (b) depict the convergence of Aitken-accelerated Schwarz for five different choices of $\rho^{(1)}$ and two choices of $N_0$. Sub-plots (c) and (d) depict the values of the Aitken parameter $\rho^{(k)}$ as a function of the Schwarz iteration $k$, again for two choice of $N_0$. The parameter $N_0$  is set to $10$ in subfigures (a) and (c), and to $2$ in subfigures (b) and (d). }
          \label{fig:neohook_aitken}
    \end{figure}
Comparing these figures with Figures \ref{fig:neohook_relax}(a) and (b), we conclude that, as expected, the Schwarz algorithm with a dynamical update of the relaxation parameter converges in a smaller number of iterations.  We additionally observe that Aitken-accelerated Schwarz appears to be stable and robust with respect to different choices of the initial relaxation parameter $\rho^{(1)}$.  Figure \ref{fig:neohook_aitken} suggests that, for the 2D nonlinear elasticity problem with $N_{\text{dd}} = 2$, selecting a smaller value of $N_0$  results in a smaller number of Schwarz iterations for a fixed value of $\rho^{(1)}$, though the algorithm is not particularly sensitive to the value of this parameter.  Curiously, when we tried the smallest possible value, $N_0=1$, the method diverged. 

%\irinanote{I think $N_0 = 1$ doesn't make sense - need to discuss with Giulia.}%By setting $N_0$ to a possibly low value, one can take the $N_0$ tuning parameter out of the equation for the Aitken acceleration scheme.  This adds to the appeal of the method, as, without $N_0$, it contains only one tuning parameter, namely $\rho^{(1)}$.  
%As shown in Figures \ref{fig:neohook_relax}(a) and (b), the algorithm is not particularly sensitive to this parameter.  

Finally, in Figures \ref{fig:neohook_aitken}(c)--(d), we depict the values  of $\rho^{(k)}$ as a function of the Schwarz iteration $k$ for $N_0 = 10$ and $N_0 = 2$.
For $\rho^{(1)} < 0.5$, the optimal values of $\rho^{(k)}$ follow similar trends for $k > N_0$ and are generally larger than $\rho^{(1)}$.  
 %Additionally, one can see from Figure \ref{fig:neohook_aitken}(c) and (d) that the optimal values of $\rho^{(k)}$ for the Aitken acceleration method are within the range $(0,1]$, even though this is not guaranteed from Aitken formula \eqref{eq:aitken}.  \irinanote{But we are doing what is in Remark \ref{remark:aitken_bounds}, no?}

\subsubsection{Schwarz with classical Anderson acceleration}

We now apply Anderson-accelerated Schwarz (Algorithm \ref{alg:DN_anderson}) to our 2D nonlinear elasticity problem with $N_{\text{dd}}=2$.  Note that, in this subsection, we are \textit{not} considering the Anderson with memory adaptation approach described in Section \ref{sec:anderson_memory}. We use the same algorithmic settings as in the previous sections.  For the solution of the constrained minimization problem \eqref{eq:anderson_constrained}, we opt for the quadratic programming function $\texttt{quadrprog}$ in {\tt MATLAB}. 
We consider both the unrelaxed and relaxed versions of Anderson acceleration. \\
In Figure \ref{fig:neohook_anderson_rho05}, we depict the $\alpha_i^{(k)}$ coefficients calculated in the Anderson optimization procedure, which solves the constrained minimization problem \eqref{eq:anderson_constrained}. For a given $m_{\rm{and}}$, the asymptotic value of $\alpha_i^{(k)}$ is  $\approx \frac{1}{m_{\rm{and}}+1}$ for all $i=1, \ldots, m_{\rm{and}}+1$, a result consistent with Theorem \ref{theo:anderson_coeff}.  
We further notice that, while the $\alpha_i^{(k)}$ coefficients are not guaranteed to be between $0$ and $1$ (cf. Remark \ref{remark:anderson_sign} ), 
they generally stay within this range.
%for $m_{\rm{and}}=2$  but mostly seem to converge in this range nonetheless.
\begin{figure}[h]
\centering
\subfloat[$m_{\rm{and}}=1$]{
     \includegraphics[scale=0.32]{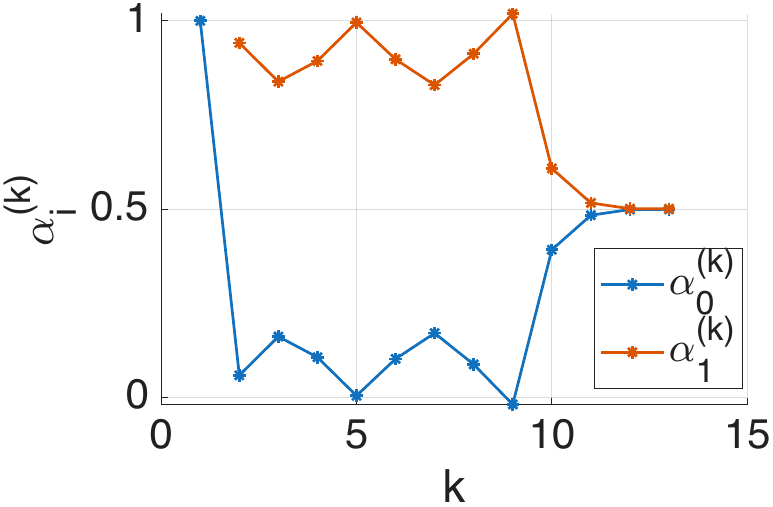}
     }
     \subfloat[$m_{\rm{and}}=2$]{
      \includegraphics[scale=0.32]{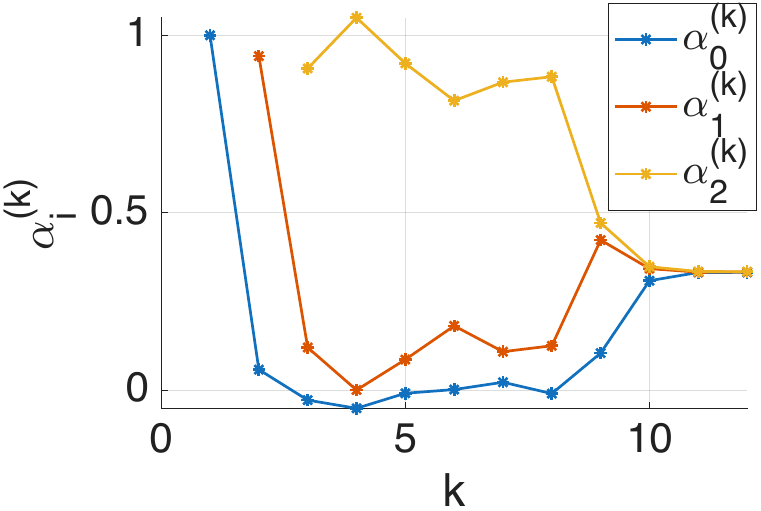}
     }
     \subfloat[$m_{\rm{and}}=3$]{
      \includegraphics[scale=0.32]{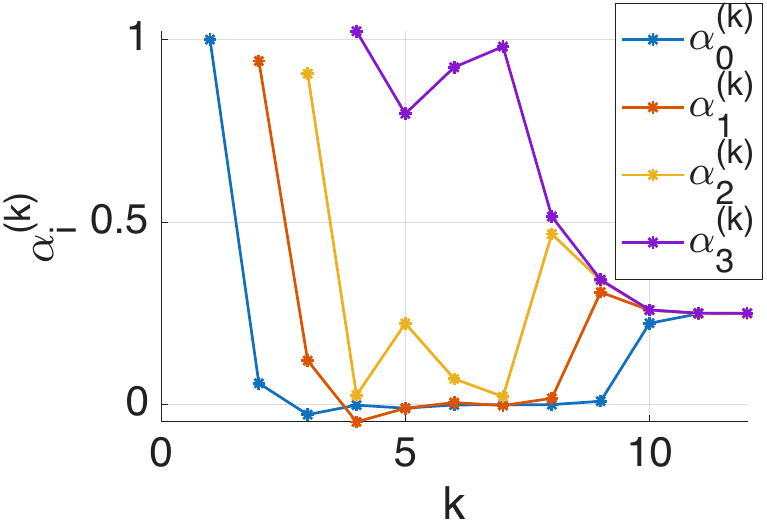}
     }
    \caption{2D nonlinear elasticity with $N_{\text{dd}} = 2$.  Anderson coefficients $\alpha_i^{(k)}$ for $\rho = 0.5$ and increasing values of $m_{\text{and}}\in \{1,2,3\}$.  }
    \label{fig:neohook_anderson_rho05}
\end{figure}

In Figures  \ref{fig:neohook_anderson_rho}(a) and (b), the norm of the residual $||\mathbf{f}_k||_2$ is depicted as a function of the Schwarz iteration number $k$. Unlike Schwarz with classical relaxation and Aitken-accelerated Schwarz, the reader can observe that Anderson-accelerated Schwarz is convergent 
for $\rho > 0.5$, including the $\rho =1$ (unrelaxed) case;
%also in the case of $\rho=1$ (i.e., no relaxation) for different choices of $m_{\rm{and}}$ as well as for $\rho=0.7$  (c.f., Figure \ref{fig:neohook_anderson_rho}(b)); 
however, there is some sensitivity to the $m_{\rm{and}}=1$ parameter: in the case of $m_{\rm{and}}=1$ and larger values of $\rho$, the convergence is much slower, requiring almost the maximum amount $\texttt{maxiter}$ of iterations. We additionally observe that oscillations in the residual norm (c.f., Figure \ref{fig:neohook_anderson_rho}(a)) may appear when utilizing unrelaxed Anderson-accelerated Schwarz. 

One can additionally infer from Figures  \ref{fig:neohook_anderson_rho}(a) and (b) that, for a fixed $\rho$, including more history information, i.e., using a larger value of $m_{\text{and}}$, can reduce the number of Schwarz iterations required for convergence; however this is true only up to a point.  As shown in Figure \ref{fig:neohook_anderson_rho}(c), employing substantially larger values of $m_{\rm{and}}$, e.g.,  $m_{\rm{and}}=20$ or higher while setting $m_{\rm{and}}=k$, actually makes convergence worse.  
%(c.f., Figure \ref{fig:neohook_anderson_rho}(c)) and the number of iterations required to reach convergence approaches $\texttt{maxit}$ for the choice $\rho=0.5$ (c.f., Figure \ref{fig:neohook_anderson_rho}(c)), empty marks curves.
This result is consistent with the literature: as remarked in \cite{walker2011anderson}, if $m_{\rm{and}}$ is small,  the secant information used by the method may be too limited to provide decidedly fast convergence, whereas if $m_{\rm{and}}$ is too large, the least-squares optimization problem for the $\mathbf{\alpha}$ coefficients \eqref{eq:anderson_constrained}  may be poorly conditioned. 
%The method does not break down upon stagnation before the solution has been found; one should expect near-stagnation at some step to result in  ill-conditioning of the least-squares problem \eqref{eq:anderson_constrained} (although we did not observe this in this numerical tests).  
While we did not observe stagnation in the Schwarz method's convergence %in our numerical tests in Figure   
Figure \ref{fig:neohook_anderson_rho} clearly shows that  values of   $m_{\text{and}}$ on the order of 20 or greater lead to slow convergence and hence large compute times.

\subsubsection{Schwarz accelerated using Anderson acceleration with memory adaptation} \label{sec:results_anderson_memory}

The results above motivated the construction of the ``Anderson with memory adaptation'' method described 
earlier in Section \ref{sec:anderson_memory}.  
%In order to remedy the problem described above, namely the fact that Anderson-accelerated Schwarz can be quite sensitive to $m_{\text{and}}$, 
%we propose an adaptive variant of the method, which we term ``Anderson with memory adaptation''.  
%To remedy this problem in the case of large history windows--- for this example, in the case of $m_{\rm{and}}>3$--- we propose an adaptive version of Anderson. 
%In this version of the algorithm, the value of $m_k$ is adapted on-the-fly using the following formula:  
%\begin{equation}
%m_k:= \begin{cases}
%    min(k, m_{\rm{and}}) \; \text{for} \,  k \leq \bar{k} \\
%    \bar{m} \ll \bar{k} \;  \text{for} \,  k > \bar{k}, \\
%\end{cases}
%\label{eq:anderson_with_memory_criterion}
%\end{equation}
%where $\bar{k}$ is a pre-selected parameter.  
%In our implementation, $\bar{k}$ is the first Schwarz iteration $k$ at which the difference   %on the interface errors at iteration $k$ and at iteration $k-1$
We set a threshold of
$\epsilon_{\rm{and}}=10^{-5}$ for the interface error $|e_{\rm{rel}}^{(k)}-e_{\rm{rel}}^{(k-1)}|$ and $\bar{m}=3$.
In Figure \ref{fig:fig:neohook_anderson_with_memory_rho}, we compare the convergence history of Anderson-accelerated Schwarz both with and without memory adaptation, with filled symbols denoting the classical Anderson method with a fixed $m_{\text{and}}$ and empty symbols denoting Anderson with memory adaptation. %We compare Anderson convergence history with and without memory adaptation. 
%The standard choice is represented by empty marks, while Anderson with memory adaptation is represented by solid marks. 
%For the set tolerance $\epsilon_{\text{and}}$, we found a value of $\bar{k}$ which is around $20$ and we set $\bar{m}=3$.
In the sub-panel in Figure \ref{fig:fig:neohook_anderson_with_memory_rho}, we depict also the $m_k$ values utilized by the Anderson with memory adaptation method, calculated according to the chosen criterion \eqref{eq:anderson_with_memory_criterion}. 
%\giulianote{We numerically observed that a regularization of the objective function of \eqref{eq:anderson_constrained} fails to smooth out these oscillations. We need to gain a better understanding of this aspect.}
\begin{figure}[h]
\centering
\subfloat[Unrelaxed Anderson]{
\includegraphics[width=0.38\linewidth]{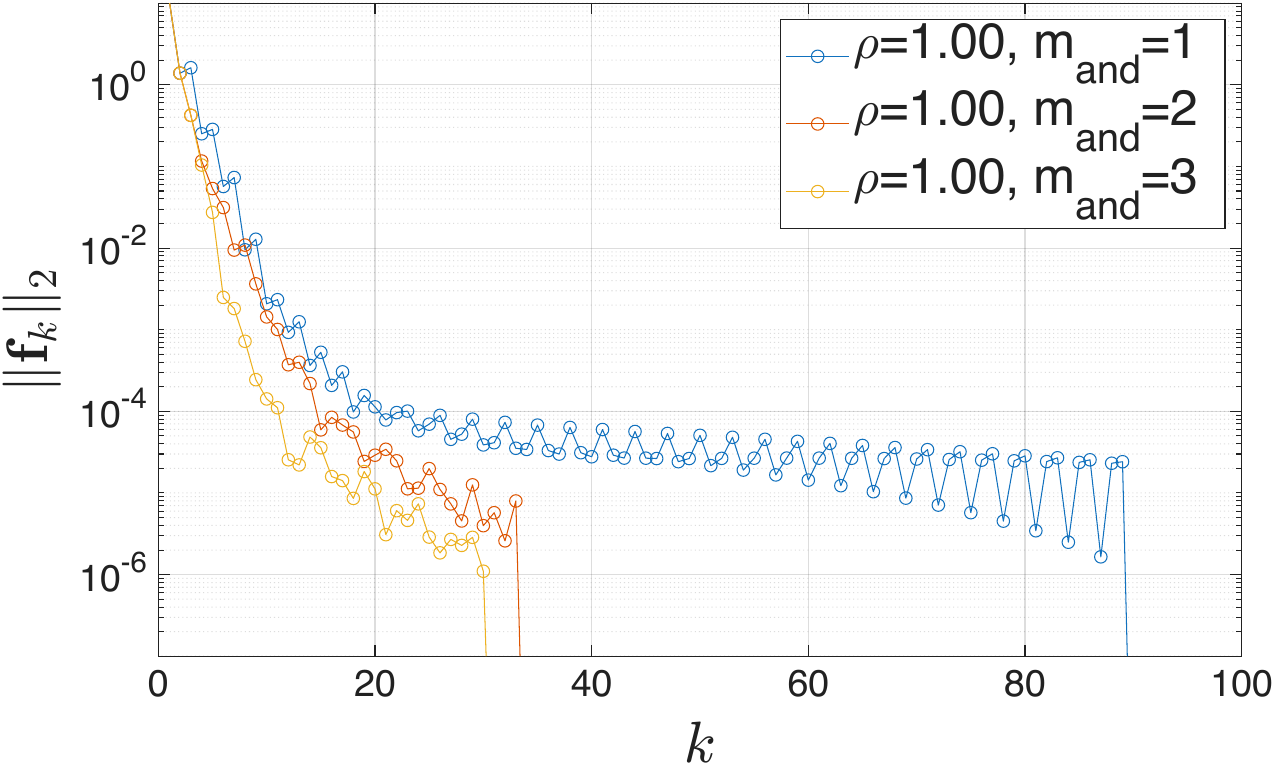}
}
\subfloat[Relaxed Anderson]{
     \includegraphics[width=0.38\linewidth]{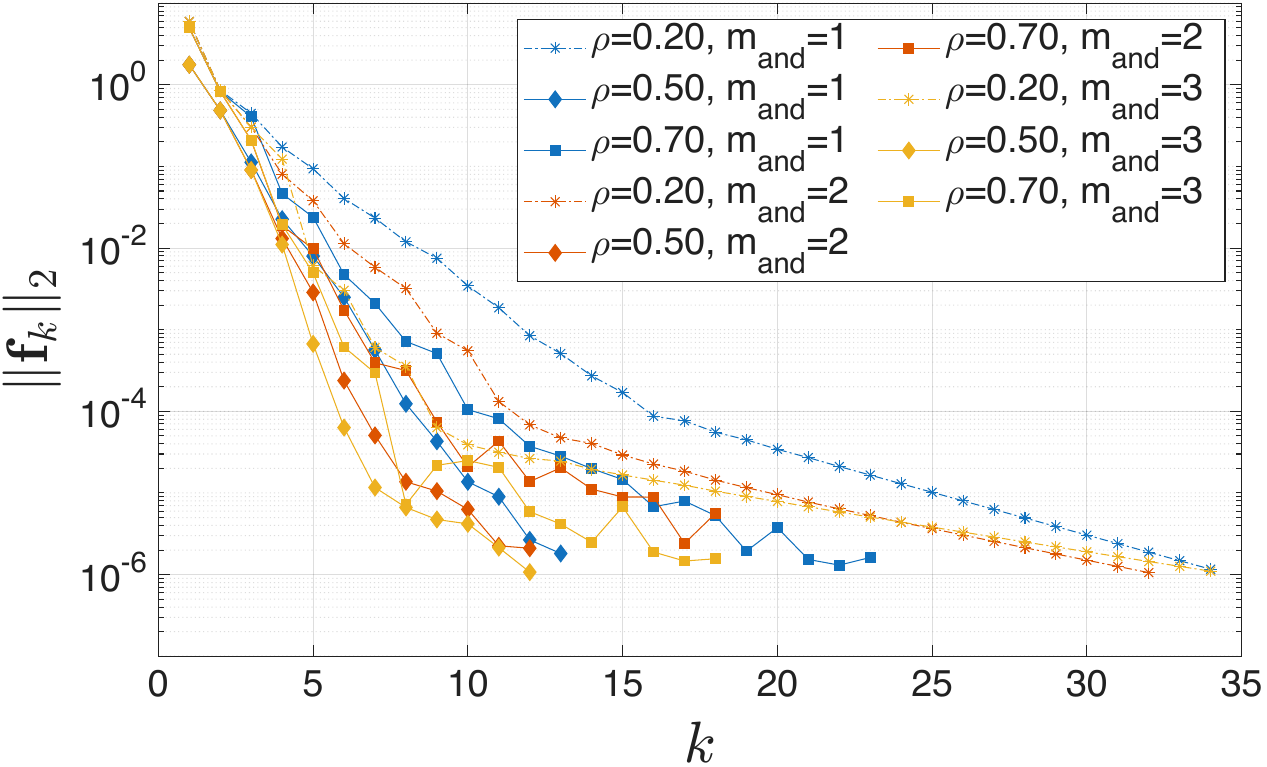}
     }
    \caption{2D nonlinear elasticity with $N_{\text{dd}} = 2$.  Convergence of Anderson-accelerated Schwarz (without memory adaptation) for different values of  $\rho$ and  $m_{\rm{and}}$.}
    \label{fig:neohook_anderson_rho}
\end{figure}
\begin{figure}[h]
\centering
     \includegraphics[scale=0.32]{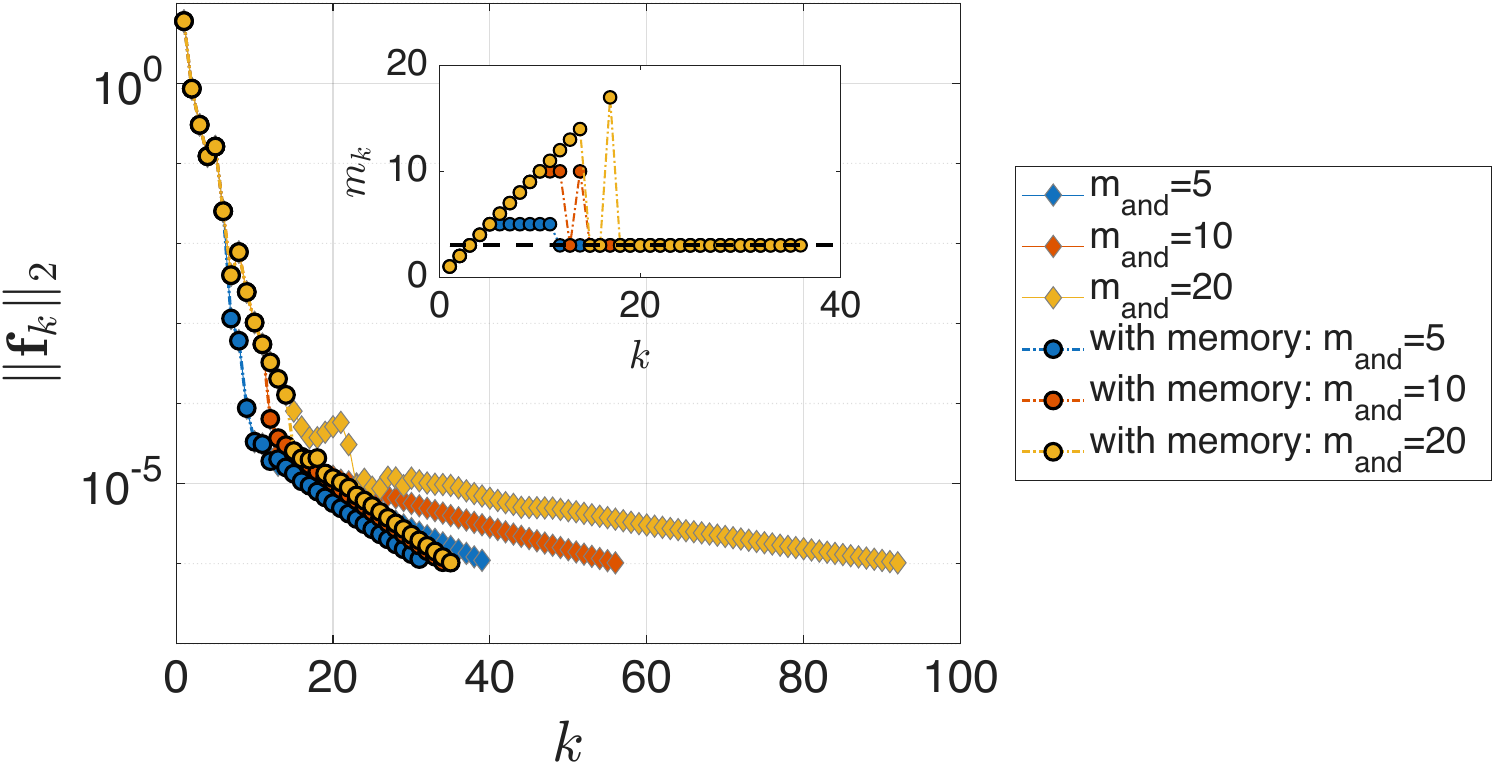}
    \caption{2D nonlinear elasticity with $N_{\text{dd}} = 2$.  Anderson with memory adaptation compared to the standard non-adapted Anderson with $\rho=0.2$. In the sub-plot, the values of $m_k$ chosen by the Anderson with memory technique are reported. The threshold in black represents $\bar{m}=3$ for the condition in \eqref{eq:anderson_with_memory_criterion}.}
    \label{fig:fig:neohook_anderson_with_memory_rho}
\end{figure}

\subsubsection{Methods comparison}
\label{sec:methods_comparison}

In Table \ref{tab:methods_comparison_rho02}, we compare Anderson-accelerated Schwarz with no memory adaptation %\irinanote{check with Giulia} 
for $m_{\rm{and}}=1,2,3$ with the same algorithm accelerated using classical relaxation and Aitken acceleration in terms of errors with respect to the global solution for $\rho = \rho^{(1)} = 0.2$.  Table \ref{tab:methods_comparison_rho05}
 shows analogous results for the case $\rho  = \rho^{(1)}=0.5$. 
The following conclusions from Tables \ref{tab:methods_comparison_rho02}--\ref{tab:methods_comparison_rho05} are noteworthy: (i) Aitken-accelerated Schwarz is far less sensitive to $\rho^{(1)}$ than classically relaxed and Anderson-accelerated Schwarz are to $\rho$, (ii) the fastest convergence is achieved for Anderson-accelerated Schwarz with $\rho = 0.5$ and $m_{\text{and}} \in \{2, 3\}$ (Table \ref{tab:methods_comparison_rho05}), and (iii) while Aitken-accelerated Schwarz takes 1--2 more Schwarz iterations to converge than the ``optimal'' version of Anderson-accelerated Schwarz, it achieves a smaller value of the error $e_{max}(i)$ upon convergence.  
 
\begin{table}[h]
\centering
\resizebox{0.88\textwidth}{!}{%
\begin{tabular}{|l|c|c|ccc|}
\hline
\textbf{} & Classical relaxation& Aitken acceleration& \multicolumn{3}{c|}{Anderson acceleration} \\
\cline{4-6}
          &                  &                   $(N_0 =2)$& $m_{\text{and}}=1$& $m_{\text{and}}=2$& $m_{\text{and}}=3$\\
\hline
$e_{max}(1)$& $5.06 \times 10^{-7}$& $1.23 \times 10^{-9}$& $1.49 \times 10^{-7}$& $2.24 \times 10^{-7}$& $3.96 \times 10^{-7}$\\
$e_{max}(2)$& $2.58 \times 10^{-7}$& $1.94 \times 10^{-9}$& $7.61\times 10^{-8}$& $1.15 \times 10^{-7}$& $2.04 \times 10^{-7}$\\
$k$& $39$& $15$& $35$&$33$& $35$\\
\hline
\end{tabular}
}
\caption{2D nonlinear elasticity with $N_{\text{dd}} = 2$.  Accuracy and convergence comparisons between non-overlapping Schwarz with classical relaxation, and with the two  acceleration methods evaluated.  All methods evaluated used  $\rho = \rho^{(1)}=0.2$.}
\label{tab:methods_comparison_rho02}
\end{table}
In Figure \ref{fig:meth_comparison}, we compare the relaxation/acceleration methods described so far in terms of relative error until convergence (or until the termination criterion on the maximum number of iterations). %; in \ref{fig:meth_comparison}(a) we set $\rho^{(1)}=0.2$, in \ref{fig:meth_comparison}(b) we set $\rho^{(1)}=0.5$.\\
\begin{table}[h]
\centering
\resizebox{0.88\textwidth}{!}{
\begin{tabular}{|l|c|c|ccc|}
\hline
\textbf{} & Classical relaxation& Aitken acceleration& \multicolumn{3}{c|}{Anderson acceleration} \\
\cline{4-6}
          &                  &                   $(N_0 =2)$& $m_{\text{and}}=1$& $m_{\text{and}}=2$& $m_{\text{and}}=3$\\
\hline
$e_{max}(1)$& $5.78 \times 10^{-7}$& $1.21 \times 10^{-9}$& $1.38 \times 10^{-7}$& $2.08 \times 10^{-7}$& $2.34 \times 10^{-7}$\\
$e_{max}(2)$& $1.14 \times 10^{-7}$& $1.14 \times 10^{-9}$& $2.12 \times 10^{-7}$& $2.98 \times 10^{-7}$& $1.83 \times 10^{-7}$\\
$k$& $22$& $14$& $14$&$13$& $13$\\
\hline
\end{tabular}
}
\caption{2D nonlinear elasticity with $N_{\text{dd}} = 2$.  Accuracy and convergence comparisons between Schwarz with classical relaxation, and with the two  acceleration methods evaluated.  All methods evaluated used  $\rho = \rho^{(1)}=0.5$. } 
\label{tab:methods_comparison_rho05}
\end{table}
In Figure \ref{fig:meth_comparison}, we show the Schwarz convergence error in the case of different relaxation/acceleration techniques with $\rho = \rho^{(1)} = 0.2$ (Figure \ref{fig:meth_comparison}(a)) and $\rho = \rho^{(1)} = 0.5$ (Figure \ref{fig:meth_comparison}(b)). We do not report the unrelaxed Schwarz scheme for comparison, because, in this case, the algorithm does not converge. 
%This is not surprising, since convergence is generally not guaranteed for the unrelaxed Dirichlet-Neumann Schwarz algorithm, as discussed earlier in Section \ref{sec:classic_DirNeu}; indeed, this is the main motivation for developing the relaxation and acceleration techniques considered in this paper.  
\begin{figure}[tbhp]
    \centering
    \subfloat[$\rho = \rho^{(1)}=0.2$]{
    \includegraphics[width=0.5\linewidth]{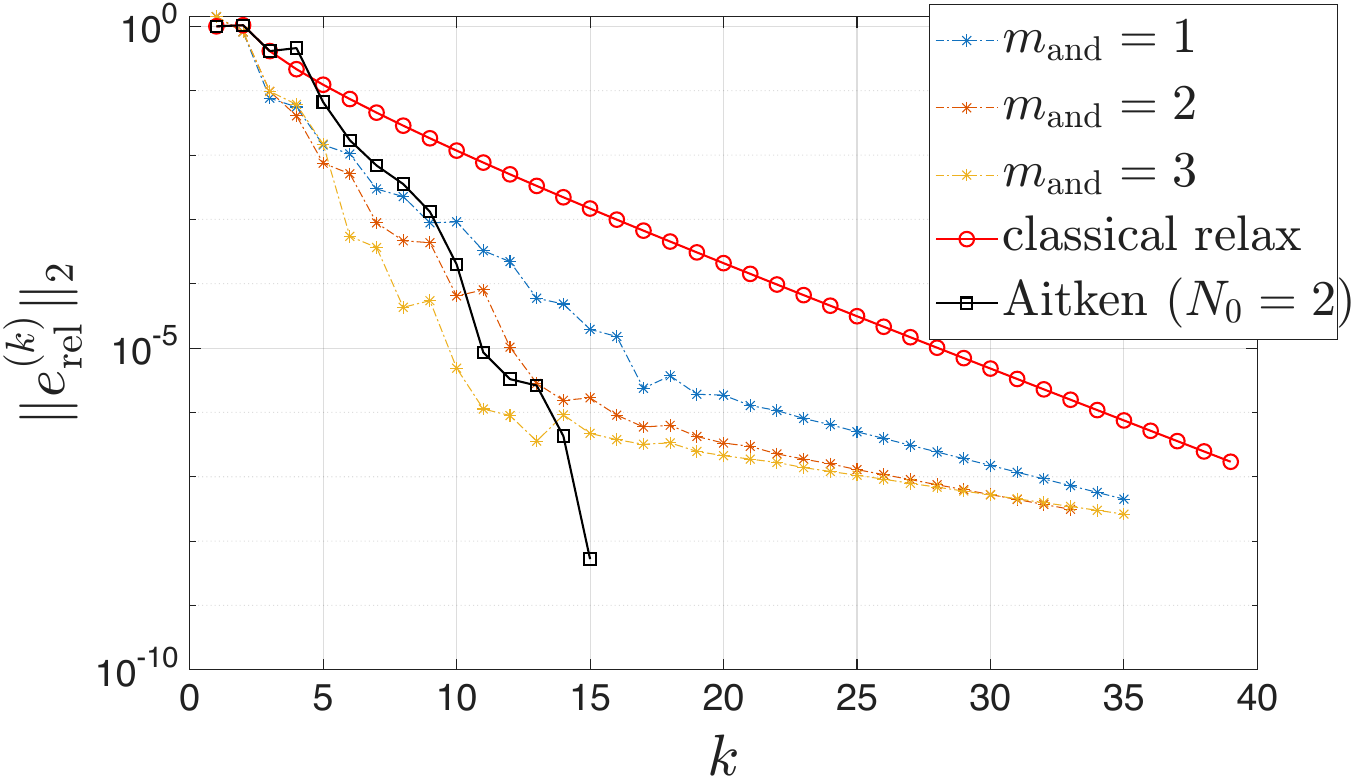}}
    \subfloat[$\rho = \rho^{(1)}=0.5$]{
    \includegraphics[width=0.5\linewidth]{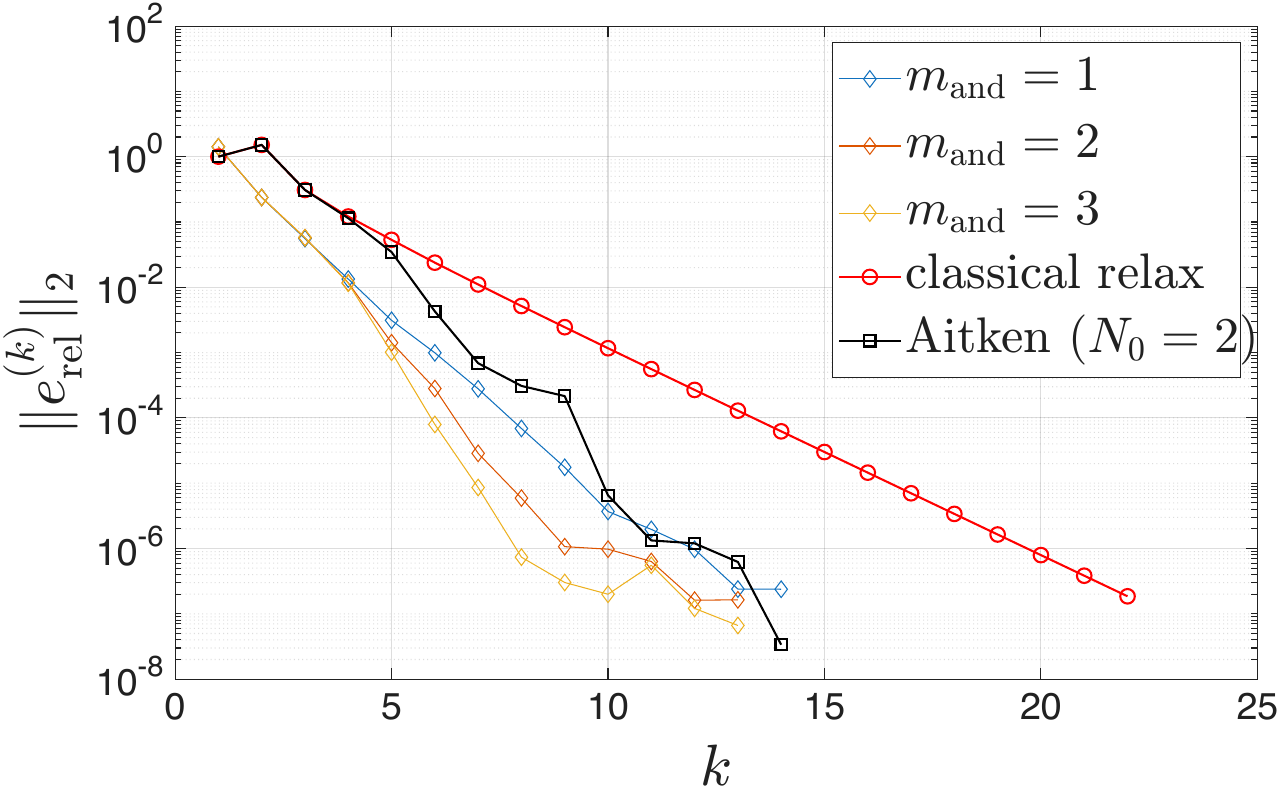}}
    \caption{2D nonlinear elasticity with $N_{\text{dd}} = 2$.  Convergence of Schwarz with classical relaxation, Aitken acceleration and Anderson acceleration,  with (a) $\rho = \rho^{(1)}=0.2$, and (b) $\rho = \rho^{(1)}=0.5$.}
    \label{fig:meth_comparison}
\end{figure}
The plots in Figure \ref{fig:meth_comparison} show that Aitken- and Anderson-accelerated Schwarz is more robust to the choice of $\rho = \rho^{(1)}$ than Schwarz with classical relaxation. This result is not surprising for the Aitken-accelerated version of the method, as the Aitken acceleration approach determines on-the-fly the optimal value of $\rho^{(k)}$ for $k > N_0$, rather than setting it to an \textit{a priori}-specified constant value, as in classical relaxation and Anderson acceleration.
The reader can observe that Anderson-accelerated Schwarz is not particularly sensitive to   $m_{\text{and}}$.  
%A possible implementation of an adaptive procedure to enhance the convergence properties of Anderson is beyond the scope of this work. \irinanote{What sort of adaptive procedure do you have in mind?  It might be best to cut this statement unless you want to explain more what you mean.}

An additional property which can be inferred from Figure \ref{fig:meth_comparison} is the convergence rate.  We numerically computed the convergence rate for the case $\rho=\rho^{(1)}=0.5$ by calculating $C_k=\frac{|e^{(k+1)}_{\rm{rel}}|}{|e^{k}_{\rm{rel}}|^2}$, excluding the pre-asymptotic regime and the last iteration.  Whereas the Schwarz alternating method with classical relaxation is converging at a linear or sub-linear rate  ($C_k \in [0.3,1]$), both Aitken- and Anderson-accelerated Schwarz converge at a rate that is almost quadratic ($C_k \in [0.5, 1.4]$). %Indeed, we computed $C_k=\frac{e^{(k+1)}_{rel}}{(e^{k}_{rel})^2}$ and excluding the pre-asymptotic and the last iterations we obtain $C_k \in [0.3,1] $ in the case of Aitken and $C_k \in [0.5, 1.4]$ in the case of Anderson.
%This result is consistent with our theoretical analysis of the scheme given in Section \ref{sec:aitken_conv_rate}. We cannot say that because the theory is only for d=1
Additional comments on the computational costs associated with all the techniques evaluated are postponed to the next section, where we consider the more general case of $N_{\rm{dd}}\ge2$.
%\irinanote{To complete the comparison, I think you should create Pareto plots that plot the CPU time for all the methods vs. the error reported in the Tables in this subsection.  I am conjecturing that Anderson acceleration will be more expensive.  Then, we can add some discussion saying that, while it's possible to get Anderson acceleration to converge in less or the same number of Schwarz iterations as Aitken, it will be slower, and is hence not preferred.  I would also like to add some commentary about Anderson being less preferred due to having more tuning knobs and requiring saving of more history information.  I can add this discussion once you add the Pareto plots.}

\subsubsection{A multi-domain study}
\label{par:multi_domain}
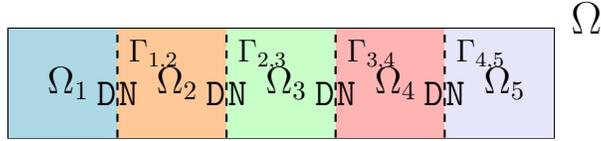
\begin{figure}[h]
    \centering
\begin{tikzpicture}[scale=1.45]
% Global domain
\draw[thick] (0,0) rectangle (5,1);
\node[font=\Large] at (5.3,1.1) {$\Omega$};

% sub-domain colors
\definecolor{col1}{RGB}{173,216,230} % light blue
\definecolor{col2}{RGB}{255,200,150} % light orange
\definecolor{col3}{RGB}{200,255,200} % light green
\definecolor{col4}{RGB}{255,180,180} % light red/pink
\definecolor{col5}{RGB}{230,230,250} % light purple

% sub-domain boundaries and widths
\def\x{0,1,2,3,4,5} % x positions of boundaries

% Draw sub-domains
\fill[col1] (0,0) rectangle (1,1);
\fill[col2] (1,0) rectangle (2,1);
\fill[col3] (2,0) rectangle (3,1);
\fill[col4] (3,0) rectangle (4,1);
\fill[col5] (4,0) rectangle (5,1);

% Draw interfaces
\foreach \i in {1,2,3,4} {
    \pgfmathtruncatemacro{\j}{\i+1} % compute i+1 and store in \j
    \draw[thick, dashed] (\i,0) -- (\i,1) 
        node[below right] {$\Gamma_{\i,\j}$};
}

% Labels for sub-domains
\node[font=\large] at (0.55,0.5) {$\Omega_1$};
\node[font=\large] at (1.55,0.5) {$\Omega_2$};
\node[font=\large] at (2.55,0.5) {$\Omega_3$};
\node[font=\large] at (3.55,0.5) {$\Omega_4$};
\node[font=\large] at (4.55,0.5) {$\Omega_5$};

% Local boundaries (thick lines)
%\draw[very thick, blue] (0,0) -- (0,1); % leftmost boundary
%\draw[very thick, red] (5,0) -- (5,1); % rightmost boundary

% Boundary labels
%\node[rotate=90, blue] at (0.2,0.5) {$\Gamma_{\text{neu}}$};
%\node[rotate=90, red] at (4.8,0.5) {$\Gamma_{\text{dir}}$};

% Interaction arrows across interfaces
%\foreach \i in {1,2,3,4} {
%\pgfmathsetmacro{\j}{int(\i+1)}
%    \draw[->] (\i-0.2,0.7) -- (\i+0.2,0.7) node[pos=1,below] {$\mathbf{n}_{\i\j}$};
    %\draw[->] (\i+0.2,0.3) -- (\i-0.2,0.3);
%}
% labels
\pgfmathsetmacro{\xend}{1}
\node[font=\large] at ({\xend-0.1}, 0.4) {$\texttt{D}$};
\foreach \i in {1,...,3}{
\pgfmathsetmacro{\xstart}{\i}
\pgfmathsetmacro{\xend}{\i+1}
\node[font=\large] at ({\xstart+0.1}, 0.4) {$\texttt{N}$};
\node[font=\large] at ({\xend-0.1}, 0.4) {$\texttt{D}$};
}
\pgfmathsetmacro{\xstart}{4}
\node[font=\large] at ({\xstart+0.1}, 0.4) {$\texttt{N}$};
\end{tikzpicture}
\caption{2D nonlinear elasticity.  Illustration of non-overlapping DD with $N_{\rm{dd}}=5$.}
\label{fig:nonovlp_multidomain}
\end{figure}

\begin{figure}[h]
\subfloat[]{
 \includegraphics[width=0.48\linewidth]{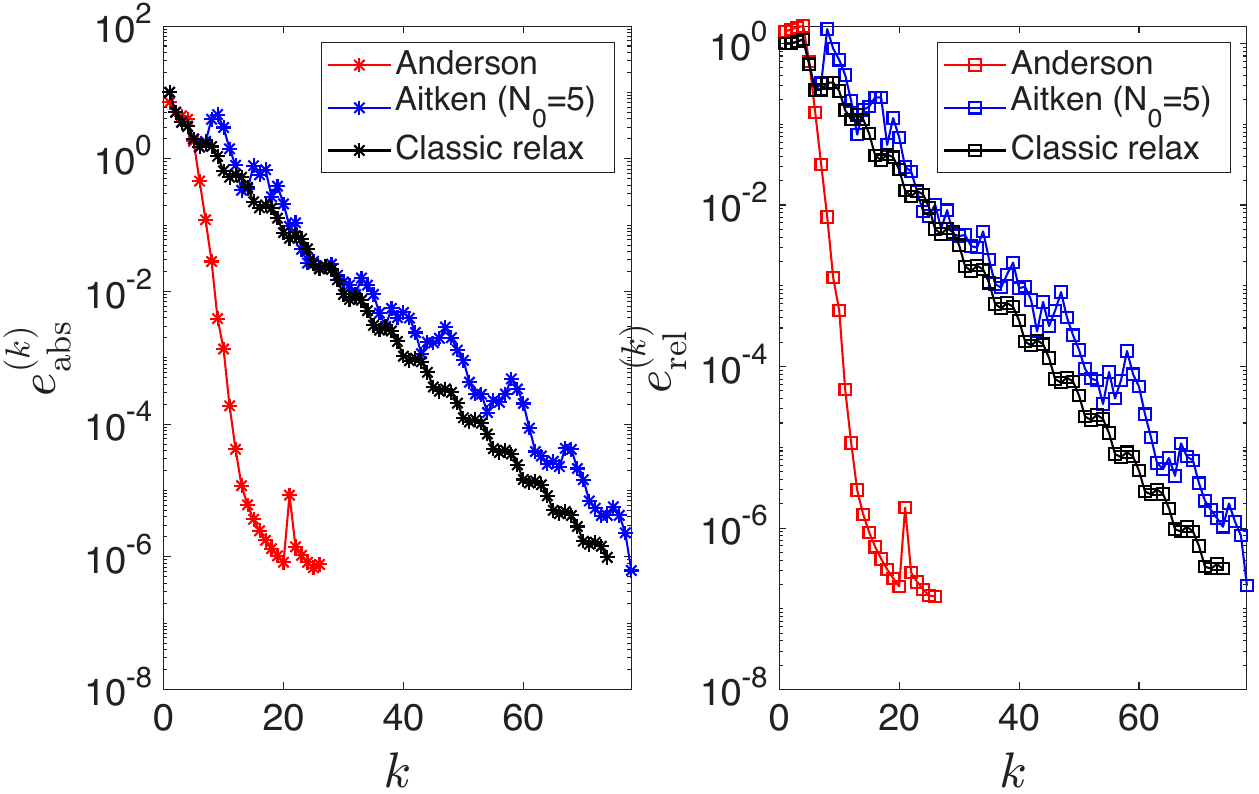}
}
\quad 
\subfloat[]{
  \begin{tikzpicture}
  	\begin{axis}[
   		xmode=linear,
   		ymode=linear,
   		grid=both,
  		minor grid style={gray!25},
   		major grid style={gray!25},
   		title = {},
   		xlabel={CPU cost $[s]$},
   		ylabel={$\frac{1}{N_{\rm{dd}}}\sum_{i=1}^{N_{\rm{dd}}} e_{max}(i)$},
   		width=0.25\textwidth,
   		legend style={at={(1.09,2)},anchor=east},
   		]
   		\addplot[ % <-- plot options
   		black,
   		mark=*,
        mark size=3,    
   		mark options={ color = black},
   		]  %
   		table{dat/cost_Ndd5_relax.dat};
   		\addlegendentry{\footnotesize classical relax};	
        \addplot[ % <-- plot options
   		black,
   		mark=square*,
        mark size=3,    
   		mark options={ color = blue},
   		]  %
   		table{dat/cost_Ndd5_aitk.dat};
   		\addlegendentry{\footnotesize Aitken};	
        \addplot[ % <-- plot options
   		black,
   		mark=diamond*,
        mark size=3,
   		mark options={ color = red,scale=1.5},
   		]  %
   		table{dat/cost_Ndd5_and.dat};
   		\addlegendentry{\footnotesize Anderson};	
   	\end{axis}
\end{tikzpicture}
}
\subfloat[]{
 \begin{tikzpicture}
  	\begin{axis}[
   		xmode=linear,
   		ymode=linear,
   		grid=both,
  		minor grid style={gray!25},
   		major grid style={gray!25},
   		title = {},
   		xlabel={Schwarz iter},
   		ylabel={$\frac{1}{N_{dd}}\sum_{i=1}^{N_{dd}} e_{max}(i)$},
   		width=0.25\textwidth,
   		legend style={at={(1.09,2)},anchor=east},
   		]
   		\addplot[ % <-- plot options
   		black,
   		mark=*,
        mark size=3,    
   		mark options={ color = black},
   		]  %
   		table{dat/iter_Ndd5_relax.dat};
   		\addlegendentry{\footnotesize classical relax};	
        \addplot[ % <-- plot options
   		black,
   		mark=square*,
        mark size=3,    
   		mark options={ color = blue},
   		]  %
   		table{dat/iter_Ndd5_aitk.dat};
   		\addlegendentry{\footnotesize Aitken};	
        \addplot[ % <-- plot options
   		black,
   		mark=diamond*,
        mark size=3,    
   		mark options={ color = red,scale=1.5},
   		]  %
   		table{dat/iter_Ndd5_and.dat};
   		\addlegendentry{\footnotesize Anderson};	
   	\end{axis}
\end{tikzpicture}
}
\caption{%(a), (b):  Methods comparison for $N_{\rm{dd}}=5$ and $\rho^{(1)}=0.5$. (a): average sub-domains error average of the per iteration CPU cost; (b): average sub-domains error vs the total number of Schwarz iteration. (c): geometric configuration. (d): convergence history in terms of absolute and relative interface errors.  
2D nonlinear elasticity with $N_{\text{dd}} = 5$.  Method comparison metrics with $\rho =0.5$.  
(a) Schwarz convergence metrics $\epsilon_{abs}^{(k)}$, and $\epsilon_{rel}^{(k)}$ as a function of the Schwarz iteration $k$. (b) Average  sub-domain error $e_{max}(i)$ as a function of the average per iteration CPU time (in seconds).  (c) Average  sub-domain error $e_{max}(i)$ as a function of the number of Schwarz iterations required to reach convergence. }
\label{fig:meth_comparison_Ndd5}
\end{figure}

In the discussion above, we have limited our attention to the Schwarz alternating method applied to two non-overlapping sub-domains.  Since the method typically requires more iterations to reach convergence when it is used 
to couple more sub-domains, there is a lot of interest in accelerating Schwarz when $N_{\rm{dd}} > 2$.  
Toward this effect, we consider here the same 2D nonlinear elasticity problem studied earlier, but with a linear-in-$x$ decomposition of our domain $\Omega$ into $N_{\text{dd}}$ sub-domains, with $N_{\text{dd}}\in \{2, ..., 5\}$.  An example
decomposition with $N_{\text{dd}} = 5$ is shown in Figure \ref{fig:nonovlp_multidomain}.
The letters $\texttt{D}$ and $\texttt{N}$ in this figure indicate whether a Dirichlet or Neumann BC 
is specified on a given boundary as seen from sub-domain $\Omega_i$ during the Schwarz iteration procedure, with 
%, respectively, the Dirichlet and Neumann boundary conditions that we set the internal boundaries for each sub-domain 
$i\in \{1, ..., 5\}$. %Indeed, the internal sub-domains are solved as mixed Dirichlet-Neumann problems. 
%The setting of external boundary conditions associated with the global domain is the same as in the $2$-domain case.
As before, we assess and compare the performance of the Schwarz alternating method with: (i) classical relaxation, (ii) Aitken acceleration with $N_0 = 5$ and $\rho^{(1)}=0.5$, and (iii) Anderson acceleration with memory adaptation based on $m_{\rm{and}}=20$, $\epsilon_{\text{and}} = 10^{-5}$ and
% not important: (which gives an approximate value of $\bar{k}=20$),  
$\bar{m} = 3$.
The multi-domain implementation of Aitken acceleration is a natural 
extension of the  procedure described in Section \ref{sec:acceleration} to the multi-domain setting, in which 
the Aitken acceleration procedure is applied to each set of neighboring sub-domains independently.  The approach is 
based on the assumption that the inverse Jacobian $\mathbf{G}^{(k)}$ in \eqref{eq:multi-secant} %of the residuals in Remark \ref{remark:secant_eq} 
is constant for all the sub-domains $i=1, \ldots, N_{\rm{dd}}$, 
and corresponds to the Aitken variant termed the ``Diagonal Aitken'' technique in 
\cite{deparis2004numerical}. 
In all our studies, we used a value of $\texttt{maxit}=80$ for the maximum allowable number of Schwarz iterations.
We employed Anderson with memory adaptation, as it reduced the number of iterations required to reach convergence by a factor of three with 
respect to the standard Anderson method with a fixed $m_k$.

Figures \ref{fig:meth_comparison_Ndd5}(a)--(c) study the convergence and performance of the Dirichlet-Neumann Schwarz algorithm with the different acceleration techniques considered for $N_{\text{dd}} = 5$.  The reader can observe that Anderson-accelerated Schwarz is the clear winner in terms of not only the number of Schwarz iterations required to reach convergence (Figures \ref{fig:meth_comparison_Ndd5}(a) and (c)), but also in terms of the CPU time per Schwarz iteration (Figure \ref{fig:meth_comparison_Ndd5}(b)). Generally, Anderson-accelerated Schwarz converges in approximately $3 \times$ fewer iterations than Aitken-accelerated Schwarz.  Curiously, the convergence of Aitken-accelerated Schwarz is actually worse than that of Schwarz with classical relaxation.  These results 
differ from our previously-reported results for the specific case of $N_{\text{dd}} = 2$, which showed that Aitken-accelerated Schwarz often outperforms, or performs comparably to, Anderson-accelerated Schwarz (see Tables \ref{tab:methods_comparison_rho02}--\ref{tab:methods_comparison_rho05} and Figure \ref{fig:meth_comparison}).  The poor convergence of Aitken-accelerated Schwarz is consistent with previously-reported results \cite{deparis2004numerical}, which showed non-convergence of this algorithm for $N_{\text{dd}} > 2$ when applied to a fluid-structure interaction problem.
It is somewhat surprising that Anderson-accelerated Schwarz 
has the lowest CPU time per Schwarz iteration (Figure \ref{fig:meth_comparison_Ndd5}(b)), as it is the only method that requires the online numerical solution of an optimization problem.  This result is likely due to the fast optimized algorithms within the $\texttt{quadrprog}$ {\tt MATLAB} routine, which we use to solve the Anderson minimization problem.   It is interesting to remark that the Schwarz alternating method accelerated using Anderson with memory adaptation exhibited some sensitivity to the $m_{\text{and}}$ parameter.  In particular, setting $m_{\text{and}}$ to a smaller value resulted in more iterations required to reach convergence.  For example, with $m_{\text{and}} = 5$, the method converged in 54 iterations, whereas it converged in just 26 iterations for $m_{\text{and}} = 20$.

We performed additional tests using two other values of the Aitken acceleration parameter: %, specifically, we set 
$\rho^{(1)}=0.2$ and $\rho^{(1)}=0.7$. For these choices, the Aitken acceleration approach negatively affects the performance of the local Newton solver and prevents the Schwarz algorithm from converging. In contrast, Anderson-accelerated Schwarz appears to be more robust with respect to the choice of $\rho$, converging in 
%: when using Anderson technique, the accelerated Schwarz algorithm converges in 
$60$  and $29$ iterations for $\rho=0.2$ and $\rho=0.7$, respectively.  

\begin{remark}
    There are more sophisticated variants of the Aitken algorithm, e.g., the ``Block Aitken'' \cite{deparis2004numerical} and the sparse Aitken-Schwarz methods \cite{Berenguer:2022}.
    %    might work better, like the block relaxation approach in \cite{deparis2004numerical} and the sparse Aitken-Schwarz method in \cite{Berenguer:2022}. 
    These and other variants of Aitken-accelerated Schwarz may be studied in a future work.
\end{remark}

Finally, we analyze how the performance and scalability of Schwarz with our various acceleration techniques depends on the number of sub-domains. In Figure \ref{fig:meth_comparison_all_Ndd}, we present both the average of the per-iteration CPU time (Figure \ref{fig:meth_comparison_all_Ndd}(a)), and the number of total Schwarz iterations needed to achieve convergence (Figure \ref{fig:meth_comparison_all_Ndd}(b)) for $N_{\rm{dd}}\in \{2,...,5\}$. We observe that the number of Schwarz iterations required for convergence for Aitken-accelerated Schwarz scales roughly linearly with $N_{\text{dd}}$; in contrast, Anderson-accelerated Schwarz appears to be much more robust to increases in $N_{\rm{dd}}$ (Figure \ref{fig:meth_comparison_all_Ndd}(b)). Again, we see that Anderson-accelerated Schwarz does not require a significantly higher CPU costs than the other two techniques (Figure \ref{fig:meth_comparison_all_Ndd}(a)).

\begin{figure}[h]
\subfloat[]{
 \begin{tikzpicture}
  	\begin{axis}[
   		xmode=linear,
   		ymode=linear,
   		grid=both,
  		minor grid style={gray!25},
   		major grid style={gray!25},
   		title = {},
   		xlabel={$N_{\rm{dd}}$},
   		ylabel={CPU cost $[s]$},
        ymin=0,
   		width=0.3\textwidth,
   		legend style={at={(2.3,0.28)},anchor=east},
   		]
        \addplot[ % <-- plot options
   		black,
   		mark=*,
        mark size=3,    
   		mark options={ color = black},
   		]  %
   		table{dat/Nddcost_Ndd2_relax.dat};
        \addplot[ % <-- plot options
   		black,
   		mark=square,
        mark size=3,    
   		mark options={ color = blue},
   		]  %
   		table{dat/Nddcost_Ndd2_aitk.dat};
        \addplot[ % <-- plot options
   		black,
   		mark=diamond,
        mark size=3,    
   		mark options={ color = red, scale=1.5},
   		]  %
   		table{dat/Nddcost_Ndd2_and.dat};
   		\addplot[ % <-- plot options
   		black,
   		mark=*,
        mark size=3,    
   		mark options={ color = black},
   		]  %
   		table{dat/Nddcost_Ndd3_relax.dat};
   		\addlegendentry{\footnotesize classical relax};	
        \addplot[ % <-- plot options
   		black,
   		mark=square,
        mark size=3,    
   		mark options={ color = blue},
   		]  %
   		table{dat/Nddcost_Ndd3_aitk.dat};
   		\addlegendentry{\footnotesize Aitken};	
        \addplot[ % <-- plot options
   		black,
   		mark=diamond,
        mark size=3,    
   		mark options={ color = red, scale=1.5},
   		]  %
   		table{dat/Nddcost_Ndd3_and.dat};
   		\addlegendentry{\footnotesize Anderson};	
        \addplot[ % <-- plot options
   		black,
   		mark=*,
        mark size=3,    
   		mark options={ color = black},
   		]  %
   		table{dat/Nddcost_Ndd4_relax.dat};
        \addplot[ % <-- plot options
   		black,
   		mark=square,
        mark size=3,    
   		mark options={ color = blue},
   		]  %
   		table{dat/Nddcost_Ndd4_aitk.dat};
        \addplot[ % <-- plot options
   		black,
   		mark=diamond,
        mark size=3,    
   		mark options={ color = red, scale=1.5},
   		]  %
   		table{dat/Nddcost_Ndd4_and.dat};
        \addplot[ % <-- plot options
   		black,
   		mark=*,
        mark size=3,    
   		mark options={ color = black},
   		]  %
   		table{dat/Nddcost_Ndd5_relax.dat};
        \addplot[ % <-- plot options
   		black,
   		mark=square,
        mark size=3,    
   		mark options={ color = blue},
   		]  %
   		table{dat/Nddcost_Ndd5_aitk.dat};
        \addplot[ % <-- plot options
   		black,
   		mark=diamond,
        mark size=3,    
   		mark options={ color = red, scale=1.5},
   		]  %
   		table{dat/Nddcost_Ndd5_and.dat};
   	\end{axis}
\end{tikzpicture}
}
\hspace{0.2cm}
\subfloat[]{
 \begin{tikzpicture}
  	\begin{axis}[
   		xmode=linear,
   		ymode=linear,
   		grid=both,
  		minor grid style={gray!25},
   		major grid style={gray!25},
   		title = {},
        ymin=0,
        ymax=85,
        axis x line=bottom,
        axis y line=left,
   		xlabel={$N_{\rm{dd}}$},
   		ylabel={Schwarz iter},
   		width=0.3\textwidth,
   		legend style={at={(1.9,0.55)},anchor=east},
   		]
        \addplot[only marks,% <-- plot options
   		black,
   		mark=*,
        mark size=3,    
   		mark options={ color = black},
   		]  %
   		table{dat/iters_allNdd_relax.dat};
      %  \addlegendentry{classical relax};	
         \addplot[ only marks,% <-- plot options
   		black,
   		mark=square,
        mark size=3,    
   		mark options={ color = blue},
   		]  %
   		table{dat/iters_allNdd_aitk.dat};
       % \addlegendentry{Aitken};	
         \addplot[ only marks,% <-- plot options
   		black,
   		mark=diamond,
        mark size=3,    
   		mark options={ color = red, scale=1.5},
   		]  %
   		table{dat/iters_allNdd_and.dat};
      %  \addlegendentry{Anderson};	
        \end{axis}
\end{tikzpicture}
}
    \caption{2D nonlinear elasticity with $N_{\rm{dd}} \in \{2,3,4,5\}$.  (a) $N_{\text{dd}}$ vs. the average CPU time (in seconds) per Schwarz iteration.  (b) $N_{\text{dd}}$ vs. the number of Schwarz iterations required to reach convergence.} % We recall that $\texttt{maxit}=80$. }
 \label{fig:meth_comparison_all_Ndd}
\end{figure}
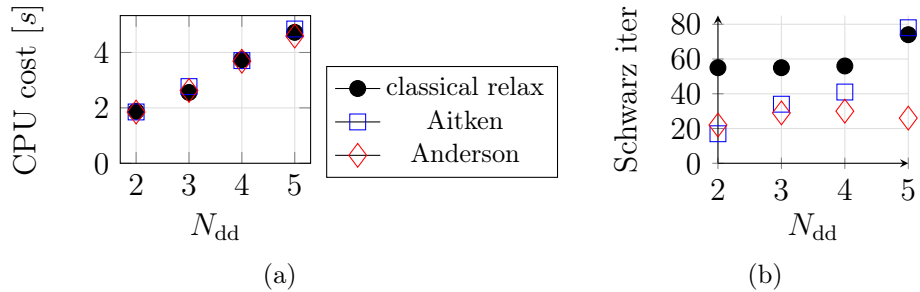

\section{Conclusions and perspectives}
\label{sec:conclusions}
%\irinanote{I will rework this section later once the body of the paper is more finalized.}

In this work, we have presented a theoretical and numerical comparison of the impact of various relaxation and acceleration techniques on the convergence of the multiplicative Schwarz alternating method with Dirichlet-Neumann transmission conditions in the context of DD-based coupling and elliptic BVPs.  For a 1D model problem, we proved quadratic convergence of Aitken-accelerated Schwarz under some conditions, and uncovered a link between
 %We completed the investigation with theoretical insights on the accelerated Schwarz convergence rate and on the link between 
 the spectral properties of the underlying fixed-point operator and the Anderson acceleration coefficients in Anderson-accelerated Schwarz.   We compared both theoretically and numerically the various acceleration techniques on a simple linear PDE, the 1D Laplace equation.  We also studied the methods' accuracy, performance and robustness (to both algorithmic parameters and the number of sub-domains being coupled) on a 2D nonlinear elasticity problem.  For the  two sub-domain setting, we demonstrated that the convergence of Aitken-accelerated Schwarz %Aitken acceleration 
 is not particularly sensitive to the two tuning parameters in this algorithm,  $\rho^{(1)}$ and $N_0$, which makes the algorithm appealing to users as minimal parameter tuning/optimization is required.  
 %depends only loosely on the initial parameter for a steady two-domain setting, Aitken acceleration depends very loosely on the initial parameter $\rho^{(1)}$ --- since the parameter is adaptive and as shown in Table \ref{tab:methods_comparison_rho02}, 
 We also demonstrated that the method can be twice as fast as Anderson-accelerated Schwarz and Schwarz with classical relaxation.  The story is different when considering greater numbers of sub-domains, where Anderson-accelerated Schwarz is the most performant and robust method, requiring $4\times$ fewer iterations to converge than the other methods evaluated.  We discovered that, while Anderson-accelerated Schwarz has a larger memory footprint, its computational cost is approximately equal to Aitken-accelerated Schwarz and Schwarz with classical relaxation.  Our observation that Anderson-accelerated Schwarz can be sensitive to its input parameter $m_{\text{and}}$, which controls the size of the history utilized within the Anderson update formula, led to the development of a new adaptive version of Anderson, termed ``Anderson with memory adaptation'', which leads to a more robust and performant Schwarz scheme. 

The research summarized in this work has informed several directions for future work, which include: 
(i) numerical investigations of the accelerated methods' performance on problems with sub-domains discretized using non-conformal meshes, (ii) explorations of different variants of Aitken-accelerated Schwarz for the multi-domain scenario (e.g., the ``Block Aitken'' method in \cite{deparis2004numerical}), (iii) reformulations of the Anderson update formula such that the relaxation parameter $\rho$ is updated dynamically rather than pre-specified, (iv) extensions to 3D and time-dependent problems, and (v) the development of acceleration strategies for the Robin-Robin version of the non-overlapping Schwarz alternating method.

\appendix
\section{Derivation of the Aitken acceleration formula \eqref{eq:aitken}} 
\label{app:aitken}
To show that the Aitken acceleration formula \eqref{eq:aitken} is obtained when minimizing the function \eqref{eq:aitken_of}, we follow the procedure in Section $4.2.3$ of \cite{deparis2004numerical}.  In Section \ref{sec:fixed_point}, we wrote the Dirichlet-Neumann Schwarz algorithm as a fixed-point iteration (c.f., equation \eqref{eq:fp_general}). To simplify the notation here, we consider conforming interfaces, so that $\gamma=\gamma_1=\gamma_2$ and $\Pi_{12}=\Pi_{21}=\mathbb{I}$.
\begin{subequations}
The sub-domain interface jump \eqref{eq:int_jump} also simplifies to 
$ \mathcal{E}(\gamma(\mathbf{u}_1), \gamma(\mathbf{u}_2))=\gamma(\mathbf{u}_2)- \gamma(\mathbf{u}_1)$.
The fixed-point iteration reads $\mathbf{g}^{\star}=T(\mathbf{g}^{\star})$ if and only if $\mathcal{E}(\gamma(\mathbf{u}^{\star}_1), \gamma(\mathbf{u}^{\star}_2))=\mathbf{0}$, with $\mathbf{u}_1^{\star}$ and $\mathbf{u}_2^{\star}$ the solution to the Dirichlet and Neumann problems once $\mathbf{g}^{\star}$ is given.
We denote by $\tilde{\boldsymbol{\rho}}$ the approximation of the inverse of Jacobian function with respect to the discrepancy. Following \cite{deparis2004numerical}, we approximate the inverse of the Jacobian
by a scalar time the identity, i.e. $\tilde{\boldsymbol{\rho}}=-\rho \mathbb{I}$.  
Now, if we apply Newton's method, we obtain, for $k>0$,
\begin{equation}
\begin{aligned}
\gamma(\mathbf{u}_{1}^{\star,(k-1)})&=\gamma(\mathbf{u}^{(k-1)}_{1})-\tilde{\boldsymbol{\rho}}\mathcal{E}(\gamma(\mathbf{u}_1^{(k-1)}),\gamma(\mathbf{u}_2^{(k-1)}))\\
\gamma(\mathbf{u}^{\star,(k)}_{1})&=\gamma(\mathbf{u}^{(k)}_{1})-\tilde{\boldsymbol{\rho}}\mathcal{E}(\gamma(\mathbf{u}_1^{(k)}),\gamma(\mathbf{u}_2^{(k)}))
\end{aligned}
\label{eq:quasi_newton_aitken}
\end{equation}
at  iteration $k-1$ and $k$. 
Since system \eqref{eq:quasi_newton_aitken} is not well-defined, we employ a least-squares technique to find the optimal $\rho$:
\begin{align*}
\rho^{(k)}&=\arg\min_{\rho \in \mathbb{R}} \big\|\gamma(\mathbf{u}^{\star, (k)}_{1})-\gamma(\mathbf{u}^{\star, (k-1)}_{1})\big\|_2^2\\
&=\arg \min_{\rho \in \mathbb{R}}\Big\|\underbrace{\left(\gamma(\mathbf{u}^{(k)}_{1})-\gamma(\mathbf{u}^{(k-1)}_{1})\right)}_{\mathbf{d}^{(k)}}+\rho \underbrace{\left(\mathcal{E}\Big(\gamma(\mathbf{u}_{1}^{(k)}),\gamma(\mathbf{u}_2^{(k)})\Big)-\mathcal{E}\Big(\gamma(\mathbf{u}_1^{(k-1)}),\gamma(\mathbf{u}_2^{(k-1)})\Big)\right)}_{\boldsymbol{\delta}^{(k)}}\Big\|_2^2\\
&=\arg \min_{\rho \in \mathbb{R}} \big\|\mathbf{d}^{(k)}+\rho \boldsymbol{\delta}^{(k)}\big\|_2^2.
\end{align*}
It is easy to see that the solution is \eqref{eq:aitken}.
%We write the solution as
%\begin{equation}
%\rho=-\frac{\boldsymbol{\delta}^{(k)}\cdot \mathbf{d}^{(k)}}{\|\boldsymbol{\delta}^{(k)}\|_2^2},
%    \label{eq:aitken_compact_formula}
%\end{equation}
% which is equivalent to
%\begin{equation}
%\rho^{(k)}=-\frac{\left(\mathcal{E}\Big(\gamma(\mathbf{u}_1^{(k)}),\gamma(\mathbf{u}_2^{(k)})\Big)-
%\mathcal{E}\Big(\gamma(\mathbf{u}_1^{(k-1)}),\gamma(\mathbf{u}_2^{(k-1)})\Big)
%\right) \cdot \left(\gamma(\mathbf{u}_1^{(k)})-\gamma(\mathbf{u}_1^{(k-1)})\right)}
%{
%\Big\|\left(\mathcal{E}\Big(\gamma(\mathbf{u}_1^{(k)}),\gamma(\%mathbf{u}_2^{(k)})\Big)-
%\mathcal{E}\Big(\gamma(\mathbf{u}_1^{(k-%1)}),\gamma(\mathbf{u}_2^{(k-1)})\Big)
%\right)\Big\|_2^2
%}.
%\end{equation}
\end{subequations}

\section*{Acknowledgments} \label{sec:acknowl}  Support for this work was received through Sandia National Laboratories' (SNL's) Laboratory Directed Research and Development (LDRD) program, as well as through the U.S. Department of Energy (DOE), Office of Science, Office of Advanced Scientific Computing Research Mathematical Multifaceted Integrated Capability Centers (MMICCs) program (under Field Work Proposal 22025291 and the Multifaceted Mathematics for Predictive Digital Twins project) and Applied Mathematics Competitive Portfolios program. Additionally, the writing of this manuscript was funded in part by I. Tezaur’s Presidential Early Career Award for Scientists and Engineers (PECASE).  
The corresponding 
author gratefully acknowledges support from the SIAM Postdoctoral Support Program fellowship.
SNL is a multi-mission laboratory managed and operated by National Technology and Engineering Solutions of Sandia, LLC., a wholly owned subsidiary of Honeywell International, Inc., for the U.S. DOE's National Nuclear Security Administration under contract DE-NA0003525.\\
The authors also thank Dr. Tommaso Taddei for his helpful comments on an earlier version of this work.

%The authors wish to thank Alejandro Mota for creating the {\tt Norma.jl} code in which our numerical examples are implemented, and for assisting with the setup of our numerical experiments within this code.  

%\section*{Acknowledgments}
%We would like to acknowledge the assistance of volunteers in putting
%together this example manuscript and supplement.

%% If you have bib database file and want bibtex to generate the
%% bibitems, please use
%%
\bibliographystyle{elsarticle-num} 
\bibliography{ref}

%% else use the following coding to input the bibitems directly in the
%% TeX file.

%% Refer following link for more details about bibliography and citations.
%% https://en.wikibooks.org/wiki/LaTeX/Bibliography_Management

\end{document}